\documentclass[11pt,reqno,a4paper]{amsart}
\usepackage[margin=1in]{geometry}
\usepackage[usenames]{color}
\usepackage[abbrev,nobysame,alphabetic]{amsrefs}
\usepackage{amsmath,pdfsync,verbatim,graphicx,epstopdf,enumerate}
\usepackage[colorlinks=true]{hyperref}
\usepackage{cancel}
\usepackage{accents}
\usepackage[framemethod=tikz]{mdframed}
\hypersetup{linkcolor=blue,citecolor=red}
\numberwithin{equation}{section}


\usepackage{cleveref}
\newcommand{\lb}[1]{\left[#1\right]}
\newcommand{\I}{\mathrm{i}}

\newcommand{\jd}{j_{\delta}}
\newcommand{\id}{i_{\delta}}

\newcommand{\lr}[1]{\left(#1 \right)}

\newcommand{\rb}{\right)}
\newcommand{\PD}{\partial}
\renewcommand{\d}{\delta}
\newcommand{\Beq}{\begin{equation}}
\newcommand{\Eeq}{\end{equation}}
\newcommand{\beq}{\begin{equation*}}
\newcommand{\eeq}{\end{equation*}}
\newcommand{\bal}{\begin{align}}
\newcommand{\eal}{\end{align}}
\renewcommand{\O}{\Omega}

\newcommand{\N}{\mathcal{N}}

\newcommand{\n}{\nabla}

\usepackage{mathtools}

\newcommand{\bp}{\begin{prob}}
	\newcommand{\ep}{\end{prob}}
\newcommand{\bpr}{\begin{proof}}
	\newcommand{\epr}{\end{proof}}

\newcommand{\tred}[1]{{\color{red}{#1}}}

\newcommand{\bel}[1]{\begin{equation}\label{#1}}
\newcommand{\ee}{\end{equation}}

\newcommand{\NT}{\negthinspace}

\newtheorem{theorem}{Theorem}[section]
\newtheorem{question}{Question}[section]
\newtheorem{corollary}[theorem]{Corollary}

\newtheorem{lemma}[theorem]{Lemma}
\newtheorem{proposition}[theorem]{Proposition}

\theoremstyle{definition}
\newtheorem{definition}[theorem]{Definition}

\newtheorem{remark}[theorem]{Remark}

\newcommand{\Rn}{\mathbb{R}^n}
\newcommand{\R}{\mathbb{R}}
\newcommand{\D}{\mathrm{d}}
\newcommand{\Lc}{\mathcal{L}}
\newcommand{\Rb}{\mathbb{R}}
\newcommand{\Dc}{\mathcal{D}}

\newcommand{\A}{\alpha}
\newcommand{\vp}{\varphi}
\newcommand{\Oc}{\mathcal{O}}

\def\tbl{\textcolor{blue}}

\usepackage{amsmath,pdfsync,verbatim,graphicx,epstopdf,enumerate}

\pretolerance=4000
\setlength{\topmargin}{-.25in}
\setlength{\textheight}{9in}
\setlength{\textwidth}{7in}
\setlength{\headheight}{26pt}
\setlength{\oddsidemargin}{-0.25in}
\setlength{\evensidemargin}{-0.25in}
\renewcommand{\d}{\delta}
\newcommand{\wt}{\widetilde}

\newcommand{\Sb}{\mathbb{S}}

\newcommand{\lv}{\lVert}
\newcommand{\rv}{\rVert}

\renewcommand{\O}{\Omega}

\newcommand{\ms}[1]{\begin{quotation}\textbf{\color{teal}Mikko's comment:\
}{\color{teal}\textit{#1}}\end{quotation}}
\newcommand{\ssc}[1]{\begin{quotation}\textbf{\color{blue}Suman's comment:\
}{\color{blue}\textit{#1}}\end{quotation}}


\title[]{The linearized Calder\'on problem for polyharmonic operators}
\author[Sahoo and Salo]{Suman Kumar Sahoo and Mikko Salo}
\address{ Department of Mathematics and Statistics, University of Jyv\"askyl\"a, Finland.
\newline
E-mail:{\tt \ suman.k.sahoo@jyu.fi}}
\address{Department of Mathematics and Statistics, University of Jyv\"askyl\"a, Finland.
\newline
E-mail:{\tt\  mikko.j.salo@jyu.fi}}

\begin{document}
	
\begin{abstract}
	In this article we consider a linearized Calder\'on problem for polyharmonic operators of order $2m\ (m\ge 2)$ in the spirit of Calder\'on's original work \cite{Calderon1980}. We give a uniqueness result for determining coefficients of order $\leq 2m-1$ up to gauge, based on inverting momentum ray transforms.
\end{abstract}

	\subjclass[2010]{Primary 35R30, 31B20, 31B30, 35J40}
	\subjclass[2020]{Primary 35R30, 31B20, 31B30, 35J40}
	\keywords{Calder\'{o}n problem, Perturbed polyharmonic operator, Anisotropic perturbation, Tensor tomography, Momentum ray transform}
	
	\maketitle
	
	\section{introduction}
	
	Let $n\ge 3$ and let $\O\subset \Rb^n$  be a bounded domain with smooth boundary $\PD\O$.  For $m\ge 2$, we consider the following polyharmonic operator $\Lc$ with lower order anisotropic perturbations  up to order $2m-1$: 
\begin{equation}\label{operator}
		\Lc(x,D)=
		(-\Delta)^m + Q(x,D),
	\end{equation}
	where 
	\begin{equation} \label{operator2}
	    Q(x,D)= \sum\limits_{l=0}^{2m-1} a^{l}_{i_1\cdots i_{l}}(x) \, D^{i_1\cdots i_l}
	\end{equation}
	is a differential operator of order $2m-1$ with $1\le i_1,\cdots,i_l\le n$ and $a^{l}$ is a smooth symmetric  tensor field of order $l$ in $\overline{\Omega}$. Einstein summation convention is assumed for repeated indices throughout the article. 

	The boundary measurements corresponding to the equation $
	\Lc(x,D)u = 0$  in $\Omega $
	may be encoded in terms of the \emph{Cauchy data set} (see e.g.\  \cite{real_principal_inv_prob} for the general case) 
	\begin{align*}
    C_{\Lc} = \{(u|_{\PD\Omega},\PD_{\nu} u|_{\PD\Omega}, \cdots ,\PD^{2m-1}_{\nu} u|_{\PD\Omega}) \,:\, u\in H^{2m}(\Omega),\  \Lc u=0\}.
\end{align*}
    The inverse problem of interest is to determine some information on the coefficients of the operator $\Lc$, up to suitable gauge transformations, from the knowledge of the Cauchy data set $\Lc$.
    
    For various special choices of boundary conditions, one could define a Dirichlet-to-Neumann type operator for solutions of $\Lc u = 0$ in $\Omega$. This requires that the boundary conditions lead to an elliptic boundary value problem (Lopatinskii-Shapiro condition) and that $0$ is not an eigenvalue for this problem. For example, one could consider solutions with the clamped boundary conditions 
    \[
    u|_{\PD \Omega} = f_0, \ \PD_{\nu} u|_{\PD \Omega} = f_1, \ \ldots \ , \ \PD_{\nu}^{m-1} u|_{\PD \Omega} = f_{m-1}
    \]
	and consider the boundary map 
	\[
	\Lambda_{\Lc}^C: (f_0, \ldots, f_{m-1}) \mapsto (\PD_{\nu}^m u|_{\PD \Omega}, \ldots, \PD_{\nu}^{2m-1} u|_{\PD \Omega}).
	\]
	Alternatively, one could consider Navier boundary conditions 
	\[
    u|_{\PD \Omega} = f_0, \ (-\Delta) u|_{\PD \Omega} = f_1, \ \ldots \ , \ (-\Delta)^{m-1} u|_{\PD \Omega} = f_{m-1}
    \]
	and consider the boundary map 
	\[
	\Lambda_{\Lc}^N: (f_0, \ldots, f_{m-1}) \mapsto (\PD_{\nu} u|_{\PD \Omega}, \ \PD_{\nu} (-\Delta) u|_{\PD \Omega}, \ \ldots, \PD_{\nu} (-\Delta)^{m-1} u|_{\PD \Omega}).
	\]
	If $0$ is not an eigenvalue of the corresponding elliptic boundary value problem, then knowing the Cauchy data set is equivalent to knowing the boundary map. One can view the Cauchy data set as a generalization of such boundary maps that is independent of the choice of (elliptic) boundary conditions.

	The most classical case is $m=1$, so that $\Lc$ is a second order elliptic operator. The prototypical inverse problem for such operators is the inverse conductivity problem posed by Calder\'on \cite{Calderon1980}. The original problem was stated for the equation $\mathrm{div}(\gamma \nabla u) = 0$, but if the conductivity function $\gamma$ is $C^2$ and positive one can reduce matters to the Schr\"odinger equation $(\Delta+q) u = 0$ for some potential $q$. One can further consider equations of the form $\Delta u + A(x) \cdot \nabla u + qu = 0$ for some vector field $A$ and potential $q$. This includes the magnetic Schr\"odinger equation and equations with convection terms, which are clearly of the form \eqref{operator}. Various results are known for related inverse problems for determining the vector field $A(x)$, sometimes up to a gauge of the form $A \mapsto A + \nabla \phi$, and the potential $q$ from boundary measurements. We refer the reader to the survey \cite{Uhl_eip_survey,Uhlmann_survey} for more information and references on the second order case.

In this paper we consider a Calder\'on type problem for polyharmonic operators of order $2m$ with a lower order perturbation up to order $2m-1$. 
There are many previous results on inverse problems for polyharmonic operators, including \cite{KRU2,KRU1,Ghosh15,Ghosh-Krishnan,BG_19,BG20,Brown_Gauthier}. In all of these results one only determines lower order coefficients up to order $\leq m$ from boundary measurements. The reason for this restriction is that the method of complex geometrical optics solutions used in these inverse problems requires certain $L^2$ Carleman estimates, which can become highly delicate for higher order operators. In the previous works the required Carleman estimate for the polyharmonic operator has been obtained by using a well known Carleman estimate for second order equations several times in a row. Since there is a loss of half derivative in the original Carleman estimate (it involves a limiting Carleman weight), iterating it many times leads to a loss of several derivatives and thus restricts the method to lower order terms of degree $\leq m$.

In this article we will consider a general higher order elliptic operator given by \eqref{operator} and recover several lower order  coefficients up to order $2m-1$. However, because of the restriction mentioned above, we are not able to consider the full nonlinear inverse problem. Rather, we will only consider the linearization of this inverse problem. In the linearized problem it is sufficient to use solutions of the ``free'' equation $(-\Delta)^m v = 0$ in the recovery of the coefficients, and this avoids the need to use Carleman estimates. However, even in the linearized problem the recovery of higher order terms becomes intricate. We will follow the ideas of  \cite{polyharmonic_mrt_application} where it was observed that momentum ray transforms (MRT) appear naturally in solving inverse problem for polyharmonic operators. The inversion of various MRT is crucial for recovering the coefficients. In particular, to handle the coefficient of order $2m-1$, we study the kernel of a partial MRT. To do so, we demonstrate a new trace free Helmholtz type decomposition result for symmetric tensor fields. 

The rest of the article is organised as follows. In Section \ref{sec:linearization} we derive the linearized inverse problem and state our main results. Section \ref{sec:uniqueness_up_to_2m-2} is devoted to the unique recovery of tensor fields up to order $2m-2$. Then  Section \ref{sec:uniqueness_up_to_2m-1} deals with the recovery of coefficients up to order $2m-1$ under certain assumptions. Section \ref{Gauge transformation} deals with gauge transformations and recovery of coefficients up to natural obstructions; see Theorem \ref{main_theorem_3}. Finally, in Section \ref{sec:mrt} we describe the kernel of MRT which is the key tool for proving  our main results. The required Helmholtz type decomposition result for tensor fields is then proved in  Section \ref{sec:decomposition}.  In Appendix \ref{sec:preliminary} we review some known results and give the construction of special solutions (known as CGO solutions) of $(-\Delta)^m u=0$. The Navier to Neumann map is then linearized in Appendix \ref{linearization} by computing the Fr\'echet derivative. 

\section{Linearization and main results}\label{sec:linearization}

We now derive the formal linearization of the polyharmonic inverse problem. The forward operator is formally given by the map $\Lc \mapsto C_{\Lc}$. Since $C_{\Lc}$ is a set and not necessarily an element in a Banach space, we cannot directly compute the Fr\'echet derivative of the forward operator. We instead assume that
 \begin{align}
      C_{\Lc_{\epsilon}} = C_{\Lc_0} \text{ for all } \epsilon\in (-a,a),\quad a>0,
 \end{align}
 where $ \Lc_0=(-\Delta)^m$, $ \Lc_{\epsilon}= (-\Delta)^m+ Q(x,\epsilon,D)$ and $ Q(x,\epsilon,D) = \sum\limits_{l=0}^{2m-1} a^{l}_{i_1\cdots i_{l}}(x,\epsilon)\, D^{i_1\cdots i_l}$. We assume that $a^{l}$ depend smoothly on $x$ and $\epsilon$ and that $ a^{l}(x,0)=0 $ for all $ 0\le l\le 2m-1$, i.e.\ we are computing the linearization at the case of zero lower order coefficients.
 Using that  $C_{\Lc_{\epsilon}} = C_{\Lc_0}$, from \cite[Lemma 2.8]{real_principal_inv_prob} we have the integral identity 
 \begin{align}\label{useful_identity}
     \langle (\Lc_{\epsilon}-\Lc_0)u,v\rangle_{L^2(\Omega)}=0,
 \end{align}
 for any $u, v \in H^{2m}(\Omega)$ satisfying  
 $ \Lc_{\epsilon} u=0$ and $\Lc_0^*\,v=0$  in $ \Omega$. This can be rewritten as 
 \begin{align*}
  \left  \langle \sum\limits_{l=0}^{2m-1} a^{l}_{i_1\cdots i_{l}}(x,\epsilon)\, D^{i_1\cdots i_l} u(x,\epsilon),v(x) \right\rangle  =0 \text{ for all } \epsilon \in (-a,a).
 \end{align*}
Differentiating this with respect to $ \epsilon$ we obtain
\begin{align*}
   \left  \langle \sum\limits_{l=0}^{2m-1} \PD_{\epsilon}( a^{l}_{i_1\cdots i_{l}}(x,\epsilon))\, D^{i_1\cdots i_l} u(x,\epsilon),v(x) \right\rangle + \left  \langle \sum\limits_{l=0}^{2m-1} a^{l}_{i_1\cdots i_{l}}(x,\epsilon)\, D^{i_1\cdots i_l} \PD_{\epsilon} u(x,\epsilon),v(x) \right\rangle  =0.  
\end{align*}
Writing $w =  u|_{\epsilon = 0}$ and setting $\epsilon =0$, we have 
\begin{align}\label{integral_identity}
   \left  \langle \sum\limits_{l=0}^{2m-1} \PD_{\epsilon}a^{l}_{i_1\cdots i_{l}}(x,0)\, D^{i_1\cdots i_l} w(x),v(x) \right\rangle =0.
\end{align}
Setting $\epsilon=0$ in the equation $\Lc u = 0$ and using $  a^{l}(x,0)=0 $ for all $ 0\le l\le 2m-1$, we see that $w$ solves $(-\Delta)^m w = 0$. It follows that the identity \eqref{integral_identity} holds for any $w$ and $v$ solving $ (-\Delta)^m w = (-\Delta)^m v =0$ in $\Omega$.

We can now formulate (with slightly different notation) the main uniqueness question for the linearized inverse problem considered in this article.

\begin{question} \label{q_linearized}
If $a^l_{i_1 \cdots i_l}$ for $0 \leq l \leq 2m-1$ are smooth tensor fields in $\overline{\Omega}$ and if 
\[
\int_{\Omega} \sum\limits_{l=0}^{2m-1} a^{l}_{i_1\cdots i_{l}} (D^{i_1 \cdots i_l} u) v \,dx = 0
\]
for all $u, v \in H^{2m}(\Omega)$ satisfying $(-\Delta)^m u = (-\Delta)^m v = 0$, is it true that the coefficients $a^l_{i_1 \cdots i_l}$ vanish possibly up to suitable gauge transformations?
\end{question}


We mention that if the Cauchy data set is the graph of suitable Dirichlet-to-Neumann type map, one can make the above formal derivation rigorous and the linearized problem is still given by Question \ref{q_linearized}. See Appendix \ref{linearization} where this is done for the Navier boundary conditions.

We now show that
it is not in general possible to recover all the coefficients in Question \ref{q_linearized} due to the presence of a gauge. One possible gauge is obtained by replacing $\Lc$ by $e^{-\phi} \Lc e^{\phi}$ for a suitable function $\phi$. Note that if
\begin{equation} \label{phi_conditions}
\phi \in C^{2m}(\overline{\Omega}) \text{ and $\PD_{\nu}^j \phi|_{\partial \Omega}=0$ for $0 \leq j \leq 2m-1$},
\end{equation}
then $u$ and $e^{\phi} u$ will have the same Cauchy data up to order $2m-1$. Thus for such functions $\phi$ one has 
\[
C_{\Lc} = C_{e^{-\phi} \Lc e^{\phi}}.
\]
It is easy to compute the highest order terms for the conjugated operator:
\begin{align*}
e^{-\phi} \Lc e^{\phi} u  &=e^{-\phi} ((-\Delta)^m + a^{2m-1}_{i_1 \cdots i_{2m-1}} D^{i_1 \cdots i_{2m-1}} + \ldots )e^{\phi} u \\
  &=  (e^{-\phi} (-\Delta) e^{\phi})^m\, u + e^{-\phi} (a^{2m-1}_{i_1 \cdots i_{2m-1}} D^{i_1 \cdots i_{2m-1}} + \ldots )(e^{\phi} u)\\
  &=  (-1)^m(\Delta\phi + |\nabla \phi|^2+ 2 \nabla\phi\cdot \nabla +\Delta)^m\, u + e^{-\phi} (a^{2m-1}_{i_1 \cdots i_{2m-1}} D^{i_1 \cdots i_{2m-1}} + \ldots )(e^{\phi} u)\\
  &= (-\Delta)^m u+ (a^{2m-1}_{i_1 \cdots i_{2m-1}} D^{i_1 \cdots i_{2m-1}} + (-1)^m 2m \nabla\phi \cdot \nabla (\Delta)^{m-1}) u + \ldots.
\end{align*}
Above $\ldots$ denotes lower order terms. Thus there is always a gauge invariance, where one can add terms of the form $(-1)^m 2m \nabla\phi \cdot \nabla (\Delta)^{m-1}$ to the term of order $2m-1$. There will be corresponding changes in the lower order terms as well.

However, if we only consider operators $\Lc$ for which $a^{2m-1} = 0$, then this type of gauge does not arise: if the term $(-1)^m 2m \nabla\phi \cdot \nabla (\Delta)^{m-1}$ vanishes identically, then necessarily $\nabla \phi = 0$ and hence $\phi = 0$ using the condition $\phi|_{\PD \Omega} = 0$.

Our first main result states that for operators with $a^{2m-1} = 0$, one can recover all the lower order terms completely in the linearized inverse problem.

	\begin{theorem}\label{th:main_theorem_1}
	    Suppose that \begin{align}\label{main_identity_1}
	        \int\limits_{\Omega} \sum_{l=0}^{2m-2}a^{l}_{i_1\cdots i_l} D^{i_1\cdots i_l} u\, v=0\quad \mbox{whenever} \quad  \Delta^m u= \Delta^m v=0.
	    \end{align}
	    Then $ a^{l}=0$ in $\Omega$ for $0\le l\le 2m-2$.
	\end{theorem}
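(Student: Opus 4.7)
The plan is to test the identity \eqref{main_identity_1} against a sufficiently rich family of exact polyharmonic solutions and to reduce the resulting relations, via Fourier analysis, to the vanishing of momentum ray transforms (MRT) of the tensor fields $a^l$; the conclusion will then follow from the MRT inversion results of Section~\ref{sec:mrt}.

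The test solutions I have in mind are
\[
u(x) = e^{i\xi\cdot x} p(x), \qquad v(x) = e^{i\eta\cdot x} q(x),
\]
where $\xi, \eta \in \mathbb{C}^n$ are complex null vectors with $\xi+\eta = \zeta$ for a prescribed $\zeta \in \mathbb{R}^n$, and $p, q$ are polynomials of degree at most $m-1$. Since $-\Delta(e^{i\xi\cdot x} f) = e^{i\xi\cdot x}(-\Delta f - 2i\xi\cdot\nabla f)$ and $\xi\cdot\xi = 0$, each application of $-\Delta$ lowers the polynomial degree by at least one, so $u$ and $v$ are exact solutions of $(-\Delta)^m u = (-\Delta)^m v = 0$ and no CGO remainder estimates are needed at this linearized level.

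Substituting into \eqref{main_identity_1}, applying Leibniz to $D^{i_1\cdots i_l} u$, and absorbing the factor $e^{i\zeta\cdot x}$ into a Fourier transform turns the identity into a polynomial relation in $\xi$ on the affine quadric
\[
V_\zeta = \{\xi \in \mathbb{C}^n : \xi\cdot\xi = 0,\ \xi\cdot\zeta = |\zeta|^2/2\},
\]
whose coefficients are Fourier transforms of symmetric contractions of the $a^l$ against tensors built from $\xi$ and from derivatives of $p, q$. Parameterizing $V_\zeta$ by a large real parameter $s$ via $\xi = \zeta/2 + s(\mu+i\nu) + O(1/s)$ with $\mu, \nu \in \zeta^\perp$, $|\mu|=|\nu|$, $\mu\cdot\nu = 0$, and matching the successive powers of $s$ while varying the polynomial factors $p, q$, one isolates enough independent contractions $\langle \widehat{a^l}(\zeta), \theta^{\otimes l}\rangle$, with $\theta = \mu + i\nu$, to recover the complete Fourier data of each $a^l$. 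These contractions are precisely the Fourier slices computing the momentum ray transforms $I^{k} a^l$ in planes normal to $\zeta$.

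Varying $\zeta$ over $\mathbb{R}^n$ then gives the full MRT of each tensor field, and the inversion results of Section~\ref{sec:mrt} will yield $a^l = 0$ for $0 \leq l \leq 2m-2$. Crucially, for these orders the MRT kernel carries no gauge obstruction (the potential-type obstruction only appears at order $2m-1$, which is excluded from the sum here), so the vanishing is unconditional. The main difficulty in this plan is the decoupling step: tensors of different orders $l$ contribute to overlapping monomials in $\xi$, and one must combine the polynomial freedom in $p, q$ with the null-quadric structure of $V_\zeta$ to separate them cleanly into MRT statements; the trace-free Helmholtz-type decomposition developed in Section~\ref{sec:decomposition} is likely needed to reconcile the complex-null parameterization with the orthogonal structure of $\mathbb{R}^n$ and thereby streamline the final MRT inversion.
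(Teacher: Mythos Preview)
Your overall strategy---plug in complex-null exponential solutions, expand in a large parameter, and reduce to momentum ray transforms---is exactly the paper's approach, and your observation that $e^{i\xi\cdot x}p(x)$ with $\xi\cdot\xi=0$ and $\deg p\le m-1$ gives \emph{exact} polyharmonic solutions is a clean way to avoid the Carleman/remainder machinery of Appendix~\ref{sec:preliminary}. So the skeleton is right.

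The gap is in the decoupling step, and it stems from a misidentification of the relevant decomposition. The contractions $\langle \widehat{a^l}(\zeta),\theta^{\otimes l}\rangle$ with $\theta$ null annihilate anything in the range of $i_\delta$, so from the leading order you only ever see the trace-free part of $a^{2m-2}$; the traces of $a^{2m-2}$ reappear at lower $s$-orders mixed with $a^{2m-3},a^{2m-4},\ldots$, and this mixing does not untangle by merely ``varying the polynomial factors''. The paper's organizing device is the elementary trace-free decomposition $a^l=\sum_{k_l} i_\delta^{k_l}b^{l,k_l}$ of \eqref{trace_free_decomposition}, which converts $a^l_{i_1\cdots i_l}D^{i_1\cdots i_l}u$ into $\sum_{k_l} b^{l,k_l}_{i_1\cdots i_{l-2k_l}}D^{i_1\cdots i_{l-2k_l}}(-\Delta)^{k_l}u$; the extra factors $(-\Delta)^{k_l}$ acting on the CGO amplitude produce powers of the transport operator $T$, and it is this $T$-grading together with Lemma~\ref{lem:sum_of_mrt} that separates the different $b^{l,k_l}$ at each $h$-order. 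Lemma~\ref{kernel_mrt_sphere_bundle} then gives injectivity precisely because each $b^{l,k_l}$ is trace free. The Helmholtz-type decomposition of Section~\ref{sec:decomposition} that you point to is \emph{not} used here---it enters only for the order-$(2m-1)$ term in Proposition~\ref{main_theorem_2}---and your remark that ``the MRT kernel carries no gauge obstruction'' for orders $\le 2m-2$ conflates the $\mathrm d^{2m-1}\phi$ gauge of the nonlinear problem with the $i_\delta$-kernel of the restricted MRT, which is present at every order $\ge 2$ and must be dealt with.
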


	
	  As an immediate corollary of Theorem \ref{th:main_theorem_1} one  obtains the following density result on certain spaces of symmetric tensor fields. For density results involving  harmonic functions and tensor fields see e.g.\ \cite{Catalin_Ali}.  One may also expect other related density results as a consequence of the method of proof of Theorem \ref{th:main_theorem_1}. See Remark \ref{rmk_density_additional}.
	  
	  \begin{corollary}\label{coro_1}
	  For each even integer $M\ge 0$ consider the set \[\mathcal{A}_M\coloneqq \mbox{span} \left\{ v\,\oplus_{0\le |\alpha|\le M}D^{\alpha} u : \Delta^{\lb{\frac{M+2}{2}}} u= \Delta^{\lb{\frac{M+2}{2}}} v=0 \quad \mbox{in} \quad \Omega  \right\}.\] Then $\mathcal{A}_M$ is dense in  $ \mathbf{S}^M\lr{C^{\infty}(\overline{\Omega})} = \oplus_{p=0}^M S^p\lr{C^{\infty}(\overline{\Omega})}$, where $S^p\lr{C^{\infty}(\overline{\Omega})} $ stands for the space of smooth symmetric  $p$ tensor fields in $ \overline{\Omega}$ and $ \oplus$ is the direct sum.
	  \end{corollary}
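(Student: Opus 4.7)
The plan is to deduce Corollary~\ref{coro_1} from Theorem~\ref{th:main_theorem_1} via a Hahn--Banach duality argument. Since $M$ is even, set $m = (M+2)/2 \in \mathbb{N}$, so that the equations defining $\mathcal{A}_M$ read $(-\Delta)^m u = (-\Delta)^m v = 0$, matching the class of test solutions in Theorem~\ref{th:main_theorem_1}, and the tensor degrees $0 \le p \le M = 2m-2$ line up with those appearing there.

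By Hahn--Banach, density of $\mathcal{A}_M$ in the Fr\'echet space $\mathbf{S}^M(C^\infty(\overline{\Omega}))$ is equivalent to the statement that every continuous linear functional annihilating $\mathcal{A}_M$ is the zero functional. Any such functional is represented by a tuple $(b^0,\ldots,b^M)$ of compactly supported distribution-valued symmetric tensor fields on $\overline{\Omega}$ (one for each component of the direct sum), and the annihilation condition unpacks to
\[
\sum_{p=0}^{M} \bigl\langle b^p_{i_1\cdots i_p},\, v\, D^{i_1\cdots i_p} u \bigr\rangle = 0
\]
for all $u,v$ with $(-\Delta)^m u = (-\Delta)^m v = 0$ in $\Omega$. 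This is exactly the integral identity of Theorem~\ref{th:main_theorem_1}, only with the coefficients $b^p$ living in $\mathcal{E}'(\overline{\Omega})$ rather than $C^\infty(\overline{\Omega})$.

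The remaining task I would take on is to upgrade Theorem~\ref{th:main_theorem_1} to allow distributional coefficients, which then gives $b^p = 0$ for $0 \le p \le M$ and hence $T \equiv 0$. I would do this by inspecting the proof: one tests the integral identity against the CGO solutions of $(-\Delta)^m u = 0$ constructed in Appendix~\ref{sec:preliminary}, and then reads off the coefficients via the inversion of the family of momentum ray transforms developed in Section~\ref{sec:mrt}. Both ingredients are insensitive to the regularity of the $b^p$, since the CGO test solutions are smooth (so the pairings $\langle b^p, v\, D^{i_1\cdots i_p} u\rangle$ are well defined for any compactly supported distribution) and MRT inversion is naturally a distributional statement.

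The main obstacle is precisely this distributional upgrade of Theorem~\ref{th:main_theorem_1}. I expect it to be essentially routine bookkeeping rather than a new idea, but it does require a careful pass through the argument to verify that each Fourier-analytic manipulation and each application of MRT inversion remains valid in the distributional category, and that no step implicitly used smoothness of the coefficients beyond what is needed for the pairings with smooth test functions to make sense.
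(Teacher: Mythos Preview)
Your approach is correct and is precisely the intended one: the paper does not spell out a separate proof for Corollary~\ref{coro_1}, merely calling it an ``immediate corollary'' of Theorem~\ref{th:main_theorem_1}, and the Hahn--Banach duality argument you describe is the standard way to extract a density statement from a uniqueness result of this type. You have in fact been more careful than the paper by explicitly flagging the need for a distributional version of Theorem~\ref{th:main_theorem_1}; your assessment that this upgrade is routine is accurate, since the trace-free decomposition \eqref{trace_free_decomposition} is pointwise-algebraic, the CGO test solutions are smooth, the MRT machinery in Section~\ref{sec:mrt} is already set up on $\mathbf{S}^m(\mathcal{E}')$, and the Paley--Wiener step holds for compactly supported distributions.
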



	 

Note that for $m=1$, Theorem \ref{th:main_theorem_1} reduces to the statement that if $f\in C^{\infty}(\overline{\Omega})$ satisfies 
\begin{align*} 
    \int\limits_{\Omega} f\, u\, v=0 \quad \mbox{whenever} \quad \Delta u=\Delta v=0 \quad \mbox{in} \quad \Omega,
    \end{align*}
then $f$ vanishes identically in $\Omega$. This is just the linearized Calder\'on problem for the Schr\"odinger equation (linearized at the zero potential), which has been studied in various settings including partial data   \cite{linearized_partial_data, SjostrandUhlmann2016} and Riemannian manifolds \cite{linearzed_complex_manifold,linearized_KLS}. Results for such linearized problems have recently become important in the context of inverse problems for nonlinear PDEs, see e.g.\ \cite{KrupchykUhlmann2020, LLLS2021}.

Now we state our second main result. It also includes terms of order $2m-1$ and gives a complete answer (modulo an assumption for $a^{2m-1}$ on $\partial \Omega$) for the linearized Calder\'on problem for polyharmonic operators when $m=2,3$, showing that one can determine the coefficients uniquely up to the gauge transform $\Lc \to e^{-\phi} \Lc e^{\phi}$. In particular, there are no other gauge invariances in this problem. We also obtain a partial answer when $m \geq 4$.

\begin{theorem}\label{main_theorem_3}
Let $\Lc$ be as in \eqref{operator}--\eqref{operator2} and suppose  $\PD^{r}_{\nu} a^{2m-1}=0$ on $ \PD\Omega$  for all $0\le  r\le 2m-1$. Assume that  \begin{align*}
	        \int\limits_{\Omega} \sum_{l=0}^{2m-1}a^{l}_{i_1\cdots i_l} D^{i_1\cdots i_l}u\, v=0\quad \mbox{whenever} \quad  \Delta^m u= \Delta^m v=0.
	    \end{align*}%
	   	    \begin{itemize}
	   	        \item [1.] For $m=2$ or $m=3$, one has 
	   	        \[
	   	        \Lc = e^{-\phi} (-\Delta)^m e^{\phi}
	   	        \]
	   	        for some $\phi$ satisfying \eqref{phi_conditions}. In particular,  \begin{align*}
	    a^{2m-1}&=i^{m-1}_{\delta} (\nabla\phi)
	    \end{align*}
	    where $ i_{\d} $ is the symmetrization with  Kronecker delta tensor; see \eqref{def_of_i} for the definition.
	    
	    \item[2.] For $ m\ge 4$, if we additionally assume that $ a^{2m-1}= i^{m-1}_{\d} A^1$ for some vector field $ A^1\in C^{\infty}(\overline{\Omega})$, then $\Lc = e^{-\phi} (-\Delta)^m e^{\phi}$ for some $\phi$ satisfying \eqref{phi_conditions}. In particular, 
	    \begin{align*}
	        A^1= \nabla\phi.
	    \end{align*}%
	   	    \end{itemize}
\end{theorem}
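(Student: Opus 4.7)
The strategy is to reduce Theorem \ref{main_theorem_3} to Theorem \ref{th:main_theorem_1} by exhibiting a function $\phi$ satisfying \eqref{phi_conditions} such that the conjugated operator $e^{-\phi}\Lc e^{\phi}$ has vanishing coefficient of order $2m-1$. Once this $\phi$ is in hand, the gauge identity $C_{\Lc} = C_{e^{-\phi} \Lc e^{\phi}}$ (valid since $\phi$ satisfies \eqref{phi_conditions}) implies that the conjugated operator still fits the hypothesis of Question \ref{q_linearized}, but with top-order perturbation equal to zero. Theorem \ref{th:main_theorem_1} then forces all the remaining coefficients of $e^{-\phi} \Lc e^{\phi}$ to vanish, which is precisely $\Lc = e^{-\phi}(-\Delta)^m e^{\phi}$, and reading off the coefficient of order $2m-1$ from the conjugation calculation in Section \ref{sec:linearization} gives $a^{2m-1} = i^{m-1}_{\delta}(\nabla \phi)$.

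The heart of the proof is therefore the structural statement $a^{2m-1} = i^{m-1}_{\delta}(\nabla \phi)$. Plugging the polyharmonic CGO solutions constructed in Appendix \ref{sec:preliminary} into the integral identity and expanding in the large asymptotic parameter, the lower order coefficients $a^{0},\dots,a^{2m-2}$ contribute only subleading terms and can be separated from the $a^{2m-1}$ contribution. The leading terms produce the vanishing of a certain momentum ray transform of $a^{2m-1}$ on a restricted collection of tensor contractions; this is the partial MRT whose kernel is characterized in Section \ref{sec:mrt}. Thus $a^{2m-1}$ must lie in that kernel.

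To turn the kernel information into an actual gauge, the trace-free Helmholtz-type decomposition of Section \ref{sec:decomposition} is applied to a symmetric $(2m-1)$-tensor in the kernel of the partial MRT. For $m=2,3$ this decomposition shows directly that every such tensor equals $i^{m-1}_{\delta} A^1$ for some vector field $A^1$; for $m \ge 4$ the analogue requires further algebraic input, which is why $a^{2m-1} = i^{m-1}_{\delta} A^1$ is assumed outright in that range. Once $A^1$ has been produced, the boundary hypothesis $\partial_{\nu}^r a^{2m-1}|_{\partial\Omega}=0$ for $0\le r \le 2m-1$ transfers to corresponding boundary vanishing for $A^1$, and a further use of the integral identity (or the structural properties of the MRT kernel itself) forces $A^1$ to be exact, so that $A^1 = \nabla \phi$ for some $\phi$ that vanishes to order $2m-1$ on $\partial \Omega$, yielding \eqref{phi_conditions}.

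The main obstacle is the kernel description of the partial MRT at top order. The Kronecker symmetrizations $i_{\delta}^{m-1}$ produce a large ``trivial'' subspace of tensors with vanishing MRT, and the real difficulty is ruling out any non-gradient contributions. The Helmholtz-type decomposition of Section \ref{sec:decomposition} performs precisely this separation, cleanly for $m \le 3$ and up to a residual obstruction for $m \ge 4$; the latter obstruction is exactly what is absorbed into the extra hypothesis $a^{2m-1} = i^{m-1}_{\delta} A^1$. After these steps, the remainder of the argument --- computing the lower order terms of $e^{-\phi}\Lc e^{\phi}$, verifying they still satisfy the integral identity against $m$-harmonic functions, and invoking Theorem \ref{th:main_theorem_1} --- is essentially bookkeeping.
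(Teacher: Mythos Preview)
Your overall strategy is right: reduce to Theorem \ref{th:main_theorem_1} by finding $\phi$ with $a^{2m-1}=i_\delta^{m-1}(\nabla\phi)$, then apply the gauge transformation. However, there is a genuine gap in how you propose to obtain this structure for $a^{2m-1}$.

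You claim that for $m=2,3$ the trace-free Helmholtz decomposition of Section \ref{sec:decomposition}, applied to a tensor in the kernel of the partial MRT, ``shows directly that every such tensor equals $i_\delta^{m-1}A^1$ for some vector field $A^1$.'' This is not what the decomposition yields. What one actually obtains (this is Proposition \ref{main_theorem_2}, part 2) is $a^{2m-1}=\D^{2m-1}\phi_0+i_\delta a^{2m-1,1}$, where $a^{2m-1,1}$ is a $(2m-3)$-tensor. The piece $\D^{2m-1}\phi_0$ is in general \emph{not} of the form $i_\delta(\cdot)$, so this is strictly weaker than $i_\delta^{m-1}A^1$. The top-order CGO information alone cannot distinguish between these two forms: both $\D^{2m-1}\phi_0$ and $i_\delta^{m-1}(\nabla\psi)$ annihilate the leading integral identity, since the former does so by integration by parts against $T^m a_0=T^m b_0=0$ and the latter because $\langle e_1+\I\eta,e_1+\I\eta\rangle=0$.

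The paper closes this gap by an iterative descent in powers of $h$. After extracting $a^{2m-1}=\D^{2m-1}\phi_0+i_\delta a^{2m-1,1}$ at order $h^{2m-1}$, one passes to order $h^{2m-2}$; the $\D^{2m-1}\phi_0$ piece now contributes nontrivially (via $\partial_{i_3}a_0$-type terms after integration by parts), and by choosing specific transport solutions $a_0,b_0$ one shows $\phi_0=0$ while simultaneously refining the structure of $a^{2m-1,1}$ and $a^{2m-2}$. For $m=2$ one round suffices; for $m=3$ two rounds are needed; and for $m\ge4$ the analogous chain produces a linear system in auxiliary potentials $\phi_1,\dots,\phi_m$ whose one-dimensional kernel forces $\phi_1=\cdots=\phi_m$, yielding $A^1=\nabla\phi$. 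Your proposal skips this entire mechanism, which is where the actual work lies.
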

\begin{remark}
    In Theorem \ref{main_theorem_3}, the boundary assumption on the coefficient $a^{2m-1}$ can be removed through a boundary determination result.
\end{remark}
In the next section we present the proof of Theorem \ref{th:main_theorem_1}. Then, in Section \ref{sec:uniqueness_up_to_2m-1}, we prove few results (mainly Proposition \ref{main_theorem_2}), which we then employ in Section \ref{Gauge transformation} to demonstrate Theorem \ref{main_theorem_3}.

\section{Proof of Theorem \ref{th:main_theorem_1}}\label{sec:uniqueness_up_to_2m-2}
In this section we demonstrate the proof of Theorem \ref{th:main_theorem_1}. The proof will be based on the trace free decomposition on $ S^m$, the space of symmetric $m$ tensor fields over $\overline{\Omega}$. 
To this end, we define two operators $i_{\delta}:S^m\rightarrow S^{m+2} $ and $j_{\delta}:S^{m}\rightarrow S^{m-2}$ as follows:
\begin{align}
(i_{\delta}f )_{i_1 \cdots i_{m+2}}&\coloneqq \sigma(f_{i_1\cdots i_m} \otimes \delta_{i_{m+1}i_{m+2}})\label{def_of_i}\\
(j_{\delta} f)_{i_1\cdots i_{m-2}}&\coloneqq \sum\limits_{k=1}^n f_{i_1\cdots i_{m-2}kk},\label{def_of_j}
\end{align}
where $\sigma$ denotes the symmetrization of a tensor field. Here $\delta_{ij}$ is the Kronecker delta tensor which is equal to $1$ for $i=j$ and $0$ otherwise. We also define $j_{\delta} f = 0$ for $f \in S^0$ or $f \in S^1$. Note that the operators $ i_{\delta}$ and $j_{\delta}$ are dual to each other with respect to the $L^2$ inner product. Based on this observation we write the trace free decomposition of tensor fields from \cite{Dairbekov_Sharafutdinov,PSU_book} as follows: for $l \geq 2$ one has 
\begin{align}\label{trace_free_decomposition}
    a^{l} =\sum\limits_{k_l=0}^{[\frac{l}{2}]} i^{k_l}_{\delta} b^{l,k_l}
\end{align}
where $b^{l,k_l}$ is a symmetric tensor field of order $l-2k_l$ with $ j_{\delta}b^{l,k_l}=0$. Any $f\in S^m$ satisfies $j_{\delta} f=0$ is known as trace free tensor field. For $l = 0,1$ any tensor field $a^l$ is trace free.

Note that  \eqref{trace_free_decomposition} is an orthogonal decomposition on the space of square integrable symmetric tensor fields over $\overline{\Omega}$, denoted by  $L^2(S^m)$. This implies that   $  a^l=0$ if and only if  $ b^{l,k_l}=0 $ for all $ k_l$ with $ 0\le k_l\le [\frac{l}{2}]$; see  \cite[Equation 6.4.2]{Sharafutdinov_book}. Thus in the remainder of this section we show that under the conditions in Theorem \ref{th:main_theorem_1}, if $a^l$ has the decomposition \eqref{trace_free_decomposition} for $ 0\leq l \leq 2m-2$, then $ b^{l,k_l}=0 $ for all $ k_l$ with $ 0\le k_l\le [\frac{l}{2}]$. This completes the proof of Theorem \ref{th:main_theorem_1}.

\begin{proof}[Proof of Theorem \ref{th:main_theorem_1}]

We insert \eqref{trace_free_decomposition} in the integral identity \eqref{main_identity_1} and obtain
\begin{align}
    0 &= \sum_{l=0}^{2m-2} \sum\limits_{k_l=0}^{[\frac{l}{2}]} (i_{\delta}^{k_l} b^{l,k_l})_{i_1 \cdots i_l} D^{i_1 \cdots i_l} u \, v = \sum_{l=0}^{2m-2} \sum\limits_{k_l=0}^{[\frac{l}{2}]} b^{l,k_l}_{i_1 \cdots i_{l-2k_l}} D^{i_1 \cdots i_{l-2k_l}} (-\Delta)^{k_l} u \, v \label{eq_4.4*}
\end{align}
whenever $\Delta^m u= \Delta^m v=0$. We now recall suitable complex geometrical optics (CGO) solutions to the polyharmonic equation given in \eqref{cgos2},
\begin{align*}
    u(x;h) =& e^{\frac{1}{h}(e_1 + \I \eta_2) \cdot x}\left(a_0(x) + ha_1(x) + \dots + h^{m-1}a_{m-1}(x) + r(x;h)\right)=  e^{\frac{1}{h}(e_1 + \I \eta_2) \cdot x} \tilde{A},\\
v(x;h) =& e^{-\frac{1}{h}(e_1 + \I \eta_2) \cdot x}\left(b_0(x) + hb_1(x) + \dots + h^{m-1}b_{m-1}(x) + \wt{r}(x;h)\right)=  e^{-\frac{1}{h}(e_1 + \I \eta_2) \cdot x}\tilde{B},
\end{align*}
where the error terms $ r(x;h) $ and $\wt{r}(x;h)$ satisfy the estimate  $\lv r(x,h)\rv_{H_{scl}^{2m}}\le c h^m$ and $\lv \wt{r}(x,h)\rv_{H_{scl}^{2m}}\le c h^m$. Moreover, we also have that $ a_j$ and $b_j$ are smooth functions in $ \overline{\Omega}$ for all $j$ with $0\le j\le m-1$.
Substituting the expressions for $u$ and $v$ given above in the identity \eqref{eq_4.4*} and using \eqref{CGO_1} and \eqref{term_H} we obtain
\begin{align}\label{integral_identity_1}
    0 &= \sum_{l=0}^{2m-2} \sum\limits_{k_l=0}^{[\frac{l}{2}]} b^{l,k_l}_{i_1 \cdots i_{l-2k_l}} D^{i_1 \cdots i_{l-2k_l}} \left[ e^{\frac{1}{h} (e_1+i\eta_2) \cdot x} (-\frac{1}{h}T - \Delta)^{k_l} \tilde{A} \right] e^{-\frac{1}{h}(e_1+i\eta_2) \cdot x} \tilde{B} \notag \\
    &= \sum_{l=0}^{2m-2} \sum\limits_{k_l=0}^{[\frac{l}{2}]} b^{l,k_l}_{i_1 \cdots i_{l-2k_l}} (D_{i_1} + \frac{1}{h} (e_1 + i\eta_2)_{i_1}) \cdots (D_{i_{l-2k_l}} + \frac{1}{h} (e_1 + i\eta_2)_{i_{l-2k_l}}) \left[ (-\frac{1}{h}T - \Delta)^{k_l} \tilde{A} \right] \tilde{B} \notag \\
    &= \sum_{l=0}^{2m-2} \sum\limits_{k_l=0}^{[\frac{l}{2}]} \sum_{j=0}^{l-2k_l}\, \int\limits_{\Omega} \binom{l-2k_l}{j}\,\frac{b^{l,k_l}_{i_1\cdots i_{l-2k_l}}}{h^{l-2k_l-j}}  (e_1+\I\eta_2)_{i_1}\cdots (e_1+\I\eta_2)_{i_{l-2k_l-j}} \nonumber\\ &\qquad \times D^{i_{l-2k_{l}-j+1}}\cdots D^{i_{l-2k_l}}( (-\frac{1}{h}T-\Delta)^{k_l} \tilde{A})\, \tilde{B}.
\end{align}

Here $\eta_2$ can be any unit vector with $e_1 \cdot \eta_2 = 0$.
From now on we write $\eta = \eta _2$.
We recover the coefficients one by one by multiplying \eqref{integral_identity_1} with suitable powers of $h$, by letting $h \to 0$ and by inverting momentum ray transforms (MRT, see Lemma \ref{kernel_mrt_sphere_bundle}). This will be done in several steps.\smallskip

\noindent\textbf{Step 1.} 
We first show that 
\begin{align*}
    b^{2m-2,0}=0.
\end{align*}
\smallskip

We start by multiplying \eqref{integral_identity_1} by $h^{2m-2} $.
   Next  utilizing the estimates given in Lemma \ref{lm:cgo_forms}, $ \lv r(x,h)\rv_{H_{scl}^{2m}}\le c h^m$, $\lv \tilde{r}(x,h)\rv_{H_{scl}^{2m}}\le c h^m$ and letting $h\rightarrow 0$,  we see that  only  $  \int\limits_{\Rn} b^{2m-2,0}_{i_1\cdots i_{2m-2}} (e_1+\I\eta)_{i_1}\cdots (e_1+\I\eta)_{i_{2m-2}} a_0\, b_0$ term survives and other terms will be $0$. This implies 
     
\begin{align}\label{identity_for_a_2m-2}
    \int\limits_{\Rn} b^{2m-2,0}_{i_1\cdots i_{2m-2}} (e_1+\I\eta)_{i_1}\cdots (e_1+\I\eta)_{i_{2m-2}} a_0\, b_0=0.
\end{align}
Adapting ideas from \cite{Kenig_Salo_Survey} we  write, 
\begin{align}\label{sum_of_lower_order_tensor}
   &b^{2m-2,0}_{i_1\cdots i_{2m-2}} (e_1+\I\eta)_{i_1}\cdots (e_1+\I\eta)_{i_{2m-2}}\nonumber\\
   &=\sum\limits_{p=0}^{2m-2}(\I)^p\, \binom{2m-2}{p}\, b_{i_1\cdots i_p1\cdots 1}^{2m-2,0} \eta_{i_1} \cdots \eta_{i_p}\nonumber\\
   &= \sum\limits_{p=0}^{2m-2} b_{i_1\cdots i_p}^{2m-2,0,p}\, \eta_{i_1} \cdots \eta_{i_p}, \quad 2\le i_1,\cdots,i_p\le n \quad \mbox{for each} \quad 0\le p \le 2m-2,
\end{align}
where $b_{i_1\cdots i_p}^{2m-2,0,p} := (\I)^p \, \binom{2m-2}{p}\,  b_{i_1\cdots i_p1\cdots 1}^{2m-2,0} $ for each $p$ with $0\le p\le 2m-2$.

 Recall that $ a_0$ and $b_0$ solve the transport equations $ T^ma_0 =T^mb_0=0$. We utilize \eqref{particular_form} and choose the following specific solutions  $a_0$ and $b_0$, where $y_2$ denotes the direction of $\eta$ and $y''=(y_3,\ldots,y_n)$ are orthogonal directions:  \[ a_0= y_2^{m-1}\,g(y'') e^{-\I\lambda(y_1+\I y_2)} \quad \mbox{and} \quad b_0= y_2^{k}\quad \mbox{for all} \quad k\quad \mbox{with}\quad 0\le k\le m-1. \]
Here $\lambda \in \mathbb{R}$ and $g(y'')$ is any smooth function in the $y''$ variable.
 This along with \eqref{identity_for_a_2m-2} and \eqref{sum_of_lower_order_tensor} implies
\begin{align*}
\int\limits_{\mathbb{R}^{n-2}}   \left(\,\, \int\limits_{\mathbb{R}^2} \sum\limits_{p=0}^{2m-2} b_{i_1\cdots i_p}^{2m-2,0,p}(y_1,y_2,y'')\, \eta_{i_1} \cdots \eta_{i_p}\, y^{m-1+k}_2 e^{-\I\lambda(y_1+\I y_2)} dy_1\, dy_2 \right) g(y'') d y''=0.
\end{align*}
Since $g$ can be any smooth function in the $y''$ variable, varying $g$ and fixing $k=m-1$ yields 
    \begin{align}\label{eq_3.7}
    \int\limits_{\mathbb{R}} \sum\limits_{p=0}^{2m-2} \widehat{b}_{i_1\cdots i_p}^{2m-2,0,p}(\lambda,y_2,y'')\, \eta_{i_1} \cdots \eta_{i_p}\, y^{2m-2}_2 e^{\lambda y_2}  dy_2&=0.
\end{align}
Here $\widehat{(\cdot)} $ is the partial Fourier transform in the $x_1$ variable. This notation will be used throughout the rest of the article to denote the partial Fourier transform. 
Next, we set $\lambda=0$ in above to obtain
\begin{align*}
    \int\limits_{\mathbb{R}} \sum\limits_{p=0}^{2m-2} \widehat{b}_{i_1\cdots i_p}^{2m-2,0,p}(0,y_2,y'')\, \eta_{i_1} \cdots \eta_{i_p}\,y^{2m-2}_2 \, dy_2&=0. 
\end{align*}
Recall that $\eta$ can be any unit vector orthogonal to $e_1$, and $y_2$ is the coordinate in direction of $\eta$. Thus the above identity can be interpreted as the vanishing of a certain MRT. By Lemma \ref{kernel_mrt_sphere_bundle} we obtain 
$\widehat{b}^{2m-2,0}(0,x')= 0$. We next argue by induction and assume that 
\begin{align}\label{eq_4.11}
  \frac{\D^r}{\D\lambda^r}\widehat{b}^{2m-2,0}(0,x')=0\quad \mbox{for} \quad 0\le r\le M. 
\end{align}
Now differentiating \eqref{eq_3.7} with respect to $\lambda$ for $M+1$ times, then  setting $\lambda=0$ and  using \eqref{eq_4.11} we obtain
\begin{align*}
    \int\limits_{\mathbb{R}}  \sum\limits_{p=0}^{2m-2} \left( \frac{\D^{M+1}}{\D \lambda^{M+1}}  \widehat{b}_{i_1\cdots i_p}^{2m-2,0,p}\right)(0,y_2,y'') \,  \eta_{i_1} \cdots \eta_{i_p}\, y^{2m-2}_2 \, dy_2=0.
\end{align*}
The combination of this with Lemma  \ref{kernel_mrt_sphere_bundle} implies
$ \frac{\D^{M+1}}{\D\lambda^{M+1}} \widehat{b}^{2m-2,0}(0,x')=0.
$
Hence by induction we have \begin{align}\label{vanishing_derivative_lambda}
   \frac{\D^{r}}{\D\lambda^{r}} \widehat{b}^{2m-2,0}(0,x')=0 \quad \mbox{ for all non-negative integers $r$}.
\end{align} 
Since $b^{2m-2,0}$ is compactly supported, the function   $\widehat{b}^{2m-2,0}(\lambda,x')$ is analytic in the $ \lambda$ variable by the Paley-Wiener theorem. This together with \eqref{vanishing_derivative_lambda} gives  $b^{2m-2,0}=0.$ \smallskip

\textbf{Step 2.} 
In this step we prove that for each $r$ with $0\le r\le m-2$, we have  
\begin{align}\label{claim_1}
    b^{2m-2-k,r-k}=0\quad \mbox{for all} \quad 0\le k\le r.
\end{align}
\smallskip

 The proof is based on induction in $r$.
 For $r=0$ \eqref{claim_1} follows from  \textbf{Step 1}.
   Assume that \eqref{claim_1}  is true for all $s$ with $0\le s\le r < m-2$. Our goal is to prove \eqref{claim_1} for $r+1$.
  To this end, we first separate the integrals for $j=0$ and that of for $j\ge 1$ in \eqref{integral_identity_1}. Then we again separate the term corresponding to $j=0$ into two terms. As a result from \eqref{integral_identity_1} we obtain,
\begin{align*}
    0= &\nonumber \sum_{l=0}^{2m-2-r-1} \sum\limits_{k_l=0}^{[\frac{l}{2}]} \, \int\limits_{\Omega} \,\frac{b^{l,k_l}_{i_1\cdots i_{l-2k_l}}}{h^{l-2k_l}}  (e_1+\I\eta_2)_{i_1}\cdots (e_1+\I\eta_2)_{i_{l-2k_l}} ( (-\frac{1}{h}T-\Delta)^{k_l} \tilde{A})\, \tilde{B} \nonumber\\
   &\quad+\sum_{l=2m-2-r}^{2m-2} \sum\limits_{k_l=0}^{[\frac{l}{2}]} \, \int\limits_{\Omega} \,\frac{b^{l,k_l}_{i_1\cdots i_{l-2k_l}}}{h^{l-2k_l}}  (e_1+\I\eta_2)_{i_1}\cdots (e_1+\I\eta_2)_{i_{l-2k_l}} ( (-\frac{1}{h}T-\Delta)^{k_l} \tilde{A})\, \tilde{B}\nonumber\\ &\quad + \sum_{l=0}^{2m-2} \sum\limits_{k_l=0}^{[\frac{l}{2}]} \sum_{j=1}^{l-2k_l}\, \int\limits_{\Omega} \binom{l-2k_l}{j}\,\frac{b^{l,k_l}_{i_1\cdots i_{l-2k_l}}}{h^{l-2k_l-j}}  (e_1+\I\eta_2)_{i_1}\cdots (e_1+\I\eta_2)_{i_{l-2k_l-j}} \nonumber\\ &\qquad \times D^{i_{l-2k_{l}-j+1}}\cdots D^{i_{l-2k_l}}( (-\frac{1}{h}T-\Delta)^{k_l} \tilde{A})\, \tilde{B}. 
\end{align*}
Simplifying and utilizing induction assumption in the second integral above we conclude,
\begin{align}\label{eq_3.12}
    0= &\nonumber \sum_{l=0}^{2m-2-r-1} \sum\limits_{k_l=0}^{[\frac{l}{2}]} \, \int\limits_{\Omega} \,\frac{b^{l,k_l}_{i_1\cdots i_{l-2k_l}}}{h^{l-2k_l}}  (e_1+\I\eta_2)_{i_1}\cdots (e_1+\I\eta_2)_{i_{l-2k_l}} ( (-\frac{1}{h}T-\Delta)^{k_l} \tilde{A})\, \tilde{B} \\ \nonumber
   &\quad+
   \sum\limits_{k_{2m-2-r}=1}^{[\frac{2m-2-r}{2}]} \, \int\limits_{\Omega} \,\frac{b^{2m-2-r,k_{2m-2-r}}_{i_1\cdots i_{l-2k_l}}}{h^{2m-2-r-2k_{2m-2-r}}}  (e_1+\I\eta_2)_{i_1}\cdots (e_1+\I\eta_2)_{i_{2m2-2-r-2k_{2m-2-r}}} ( (-\frac{1}{h}T-\Delta)^{k_{2m-2-r}} \tilde{A})\, \tilde{B}\\ &\quad +\cdots+\sum\limits_{k_{2m-2}=r+1}^{m-1} \, \int\limits_{\Omega} \,\frac{b^{2m-2,k_{2m-2}}_{i_1\cdots i_{2m-2-2k_{2m-2}}}}{h^{2m-2-2k_{2m-2}}}  (e_1+\I\eta_2)_{i_1}\cdots (e_1+\I\eta_2)_{i_{2m-2-2k_{2m-2}}} ( (-\frac{1}{h}T-\Delta)^{k_{2m-2}} \tilde{A})\, \tilde{B}\nonumber\\&\quad+ \sum_{l=0}^{2m-2} \sum\limits_{k_l=0}^{[\frac{l}{2}]} \sum_{j=1}^{l-2k_l}\, \int\limits_{\Omega} \binom{l-2k_l}{j}\,\frac{b^{l,k_l}_{i_1\cdots i_{l-2k_l}}}{h^{l-2k_l-j}}  (e_1+\I\eta_2)_{i_1}\cdots (e_1+\I\eta_2)_{i_{l-2k_l-j}} \nonumber\\ &\qquad \times D^{i_{l-2k_{l}-j+1}}\cdots D^{i_{l-2k_l}}( (-\frac{1}{h}T-\Delta)^{k_l} \tilde{A})\, \tilde{B}. 
\end{align}
Our next goal is to find the limit $ \lim_{h\rightarrow 0} h^{2m-2-r-1}\times$ \eqref{eq_3.12}.
To achieve this, we consider the non-zero integrals in the R.H.S of \eqref{eq_3.12} with  $h^{(2m-2-r-1)}$ in the denominator. Observe that, these terms are derived  from first three integrals in the preceding equation (for certain values of summation variables) and  there will be no contribution from the last term in the above displayed equation as we see below. We show this by examining the last term for each values of $j$, when $l\ge 2m-2-r$ as $j\ge 1$. We   verify this only for $j=1$ and for other values of $j$ similar arguments  work. Therefore we consider
\begin{align*}
   & \sum_{l=0}^{2m-2} \sum\limits_{k_l=0}^{[\frac{l}{2}]} \, \int\limits_{\Omega} \binom{l-2k_l}{1}\,\frac{b^{l,k_l}_{i_1\cdots i_{l-2k_l}}}{h^{l-2k_l-1}}  (e_1+\I\eta_2)_{i_1}\cdots (e_1+\I\eta_2)_{i_{l-2k_l-1}}  D^{i_{l-2k_l}}( (-\frac{1}{h}T-\Delta)^{k_l} \tilde{A})\, \tilde{B}\\
    &\quad= \sum_{l=0}^{2m-2-r-1} \sum\limits_{k_l=0}^{[\frac{l}{2}]} \, \int\limits_{\Omega} \binom{l-2k_l}{1}\,\frac{b^{l,k_l}_{i_1\cdots i_{l-2k_l}}}{h^{l-2k_l-1}}  (e_1+\I\eta_2)_{i_1}\cdots (e_1+\I\eta_2)_{i_{l-2k_l-1}}  D^{i_{l-2k_l}}( (-\frac{1}{h}T-\Delta)^{k_l} \tilde{A})\, \tilde{B}\\ & \qquad+ \sum_{l=2m-2-r}^{2m-2} \sum\limits_{k_l=0}^{[\frac{l}{2}]} \, \int\limits_{\Omega} \binom{l-2k_l}{1}\,\frac{b^{l,k_l}_{i_1\cdots i_{l-2k_l}}}{h^{l-2k_l-1}}  (e_1+\I\eta_2)_{i_1}\cdots (e_1+\I\eta_2)_{i_{l-2k_l-1}} D^{i_{l-2k_l}}( (-\frac{1}{h}T-\Delta)^{k_l} \tilde{A})\, \tilde{B}\\
   &\quad= \sum_{l=0}^{2m-2-r-1} \sum\limits_{k_l=0}^{[\frac{l}{2}]} \, \int\limits_{\Omega} \binom{l-2k_l}{1}\,\frac{b^{l,k_l}_{i_1\cdots i_{l-2k_l}}}{h^{l-2k_l-1}}  (e_1+\I\eta_2)_{i_1}\cdots (e_1+\I\eta_2)_{i_{l-2k_l-1}}  D^{i_{l-2k_l}}( (-\frac{1}{h}T-\Delta)^{k_l} \tilde{A})\, \tilde{B}\\ & \qquad+  \sum\limits_{k_{2m-2-r}=1}^{[\frac{2m-2-r}{2}]} \, \int\limits_{\Omega} \binom{2m-2-r-2k_{2m-2-r}}{1}\,\frac{b^{2m-2-r,k_{2m-2-r}}_{i_1\cdots i_{m-2-r-2k_{2m-2-r}}}}{h^{2m-2-r-2k_{2m-2-r}-1}}\\& \qquad \quad \times (e_1+\I\eta_2)_{i_1}\cdots (e_1+\I\eta_2)_{i_{2m-2-r-2k_{2m-2-r}-1}}  D^{i_{2m-2-r-2k_{2m-2-r}}}( (-\frac{1}{h}T-\Delta)^{k_{2m-2-r}} \tilde{A})\, \tilde{B}+ \cdots\\& \qquad+  \sum\limits_{k_{2m-2}=r+1}^{m-1} \, \int\limits_{\Omega} \binom{2m-2-2k_{2m-2}}{1}\,\frac{b^{2m-2,k_{2m-2}}_{i_1\cdots i_{2m-2-2k_{2m-2}}}}{h^{2m-2-2k_{2m-2}-1}}  (e_1+\I\eta_2)_{i_1}\cdots (e_1+\I\eta_2)_{i_{2m-2-2k_{2m-2}-1}}\\ &\qquad \quad\times  D^{i_{2m-2-2k_{2m-2}}}( (-\frac{1}{h}T-\Delta)^{k_{2m-2}} \tilde{A})\, \tilde{B}.
\end{align*}
We arrived at the last equality using the induction hypotheses. Now notice that the highest power of $h$ in the denominator of  the aforementioned terms is $2m-2-r-2$, therefore  these  terms will disappear, when multiplying them by $h^{2m-2-r-1}$ and then letting $h\rightarrow 0$.   
 Thus multiplying  \eqref{eq_3.12} by $h^{2m-2-r-1}$ and letting $h\rightarrow 0$, we obtain 
\begin{align}\label{eq_3.14}
  \sum\limits_{k=0}^{r+1}  \int\limits_{\Omega}  b^{2m-2-k,r+1-k}\,(e_1+\I\eta)_{i_1}\cdots (e_1+\I\eta)_{i_{2m-2-2(r+1)+k}} \left((-T)^{r+1-k}a_0\right)\,b_0=0,\end{align}
Employing the same strategy as in \eqref{sum_of_lower_order_tensor}, we next express $b^{2m-2-k,r+1-k}\,(e_1+\I\eta)_{i_1}\cdots (e_1+\I\eta)_{i_{2m-2-2(r+1)+k}}$ for each $0\le k \le r+1$, as the sum of lower order tensors in the following way:
\begin{align*}
    B^{2m-2-k,r+1-k}=\sum\limits_{p=0}^{2m-2-2(r+1)+k} b_{i_1\cdots i_p}^{2m-2-l,r+1-k,p} \, \eta_{i_1}\cdots \eta_{i_p}\, \quad \mbox{for}\quad 0\le k \le r+1.
\end{align*}%
Next, we choose
$ a_0=y_2^{m-1}\,g(y'') e^{-\I\lambda(y_1+\I y_2)}$ and $ b_0= y_2^{m-1-p} $ for $ 0\le p\le r+1 $ to be the solution of the transport equation $T^m(\cdot)=0,$ where $ g$ is an arbitrary smooth function in $y''$. 
Plugging this into  \eqref{eq_3.14}, then utilizing arbitrariness of $g$ and performing integral in the $y_1$ variable from  \eqref{eq_3.14} we conclude
\begin{align}\label{eq_3.16}
\sum\limits_{k=0}^{r+1}   \int\limits_{\mathbb{R}} c_{r+1-k}  \widehat{B}^{2m-2-k,r+1-k}(\lambda,y_2,y'')\, y^{2m-2-2(r+1)+k+(r+1-p)}_2 e^{\lambda y_2}  dy_2&=0\quad \mbox{for}\quad 0\le p\le r+1
\end{align}
where $ c_{r+1-k}$ are non-zero constants for $ 0\le k \le r+1$. Note that the constant $(-1)^{r+1-k}$ appearing in the equation \eqref{eq_3.14} is absorbed into $c_{r+1-k}$. 
Next, we specify $\lambda=0$ in above to obtain
\begin{align*}
  \sum\limits_{k=0}^{r+1}c_{r+1-k}  I^{2m-2-2(r+1)+k+(r+1-p)} \widehat{B}^{2m-2-k,r+1-k}=0 \quad \mbox{for}\quad 0\le p\le r+1, 
\end{align*}
where for all non-negative integers $m$,  $I^m$ stands for the MRT defined in Section \ref{sec:mrt}; see equation \eqref{Eq4.1}.
We next combine Lemma \ref{lem:sum_of_mrt} with \Cref{kernel_mrt_sphere_bundle} to deduce that $\widehat{b}^{2m-2-k,r+1-k}(0,x')=0$ for all $0\le k\le r+1.$ At first, applying Lemma \ref{lem:sum_of_mrt} from the preceding equation we segregate MRT of different order tensor fields. This means that for any $ 0\le k\le r+1 $, we have  $ I^{2m-2-2(r+1)+k} \widehat{B}^{2m-2-k,r+1-k}=0 $. This along with Lemma \ref{kernel_mrt_sphere_bundle} entails $\widehat{b}^{2m-2-k,r+1-k}(0,x')=0$ for all $0\le k\le r+1.$

We next argue by induction and  for each $ 0\le k\le r+1$ assume that 
\begin{align}\label{eq_3.18}
 \left( \frac{\D^p}{\D \lambda^p} \widehat{b}^{2m-2-k,r+1-k}\right)(0,x')=0 \quad \mbox{for all} \quad 0\le p \le M.
\end{align}
Next  differentiating \eqref{eq_3.16} $M+1$ times more and then evaluating at $\lambda=0$ and using \eqref{eq_3.18} we obtain
\begin{align*}
   \sum\limits_{k=0}^{r+1}c_{r+1-k}  I^{2m-2-2(r+1)+k+(r+1-p)} \left(\frac{\D^{M+1}}{\D \lambda^{M+1}}\widehat{B}^{2m-2-k,r+1-k}\right)=0.
\end{align*}
This together with Lemma \ref{lem:sum_of_mrt} entails
\begin{align*}
    I^{2m-2-2(r+1)+k} \left(\frac{\D^{M+1}}{\D \lambda^{M+1}}\widehat{B}^{2m-2-k,r+1-k}\right)=0 \quad \mbox{for all} \quad 0\le k\le r+1.
\end{align*}
By  Lemma \ref{kernel_mrt_sphere_bundle} this gives  $
 \left(\frac{\D^{M+1}}  {\D \lambda^{M+1}}\widehat{b}^{2m-2-k,r+1-k}\right) (0,x')=0 $ for all $ 0\le k\le r+1.
$
Hence by induction for all non-negative integers $p$, we obtain 
\begin{align}\label{eq_3.20}
    \left(\frac{\D^{p}}  {\D \lambda^{p}}\widehat{b}^{2m-2-k,r+1-k}\right) (0,x')=0 \quad \mbox{for all} \quad 0\le k\le r+1.
\end{align}
We have that for each $ 0\le k\le r+1$, the tensor $b^{2m-2-k,r+1-k}$ is compactly supported in the $x_1$ variable. This implies  the tensor  $\widehat{b}^{2m-2-k,r+1-k}(\lambda,x')$ is analytic in the $ \lambda$ variable by the Paley-Wiener theorem. This together with \eqref{eq_3.20} implies that, for each $ 0\le k\le r+1$ we have
$ b^{2m-2-k,r+1-k}=0 \quad \mbox{in} \quad \Omega.
$ This finished the induction step and completes the proof of  \eqref{claim_1}.

Observe that, $ m\le 2m-2-k\le 2m-2 $ and $  0\le r-k\le m-2-k$ whenever $ 0\le k\le r$ and $ 0\le r\le m-2$. After a re-indexing we conclude combining  \textbf{Step 1} and \textbf{Step 2}  that
\begin{align*}
     b^{l,k_l}=0 \quad \mbox{for all} \quad k_l \quad \mbox{such that} \quad 0\le k_l\le l-m \quad \mbox{and} \quad m\le l\le 2m-2.
\end{align*}\smallskip
At this stage, inserting above findings into the integral identity \eqref{eq_4.4*} we obtain 
\begin{align*}
    0= \sum_{l=m}^{2m-2} \sum\limits_{k_l=l-m+1}^{[\frac{l}{2}]} b^{l,k_l}_{i_1 \cdots i_{l-2k_l}} D^{i_1 \cdots i_{l-2k_l}} (-\Delta)^{k_l} u \, v+  \sum_{l=0}^{m-1} \sum\limits_{k_l=0}^{[\frac{l}{2}]} b^{l,k_l}_{i_1 \cdots i_{l-2k_l}} D^{i_1 \cdots i_{l-2k_l}} (-\Delta)^{k_l} u \, v. 
\end{align*}
\smallskip

\textbf{Step 3.} Next, we show that 
 \begin{align}\label{recover_up_to_m-1}
     b^{l,k_l}=\begin{cases}
     0 \quad \mbox{for all} \quad 0 \le k_l\le \left[\frac{l}{2}\right] \quad \hspace{6.8mm} \mbox{where}\quad  0\le l\le m-1\\
     0 \quad\mbox{for all}\quad l-m< k_l\le \left[\frac{l}{2}\right]\quad \mbox{where} \quad m\le l \le 2m-2.
     \end{cases}
 \end{align}\\
 
 This step can be carried out using the same analysis used in \cite[Theorem 1.1]{polyharmonic_mrt_application}. However, we give an alternate proof using the ideas  from \textbf{Step 2}.
   To this end, we  multiply \eqref{integral_identity_1} by $h^{m-1}$ and let $ h\rightarrow 0$ (using similar arguments given in \textbf{Step 2}) to obtain
  \begin{align}\label{eq_4.19}
  \sum\limits_{l=m-1}^{2m-2}  \int\limits_{\Omega} b^{l,k_l}\,(e_1+\I\eta)_{i_1}\cdots (e_1+\I\eta)_{i_{l-2k_l}} T^{k_l}\,a_0\,b_0=0,
\end{align}
where $ m-1-k_{m-1}=\cdots=2m-2-k_{2m-2}= A=m-1 $ and $a_0$, $b_0$ satisfy  $T^m (\cdot)=0$. This entails
 \begin{align*}
  \sum\limits_{l=m-1}^{2m-2}  \int\limits_{\Omega} b^{l,l-(m-1)}\,(e_1+\I\eta)_{i_1}\cdots (e_1+\I\eta)_{i_{2m-2-l}} T^{l-(m-1)}\,a_0\,b_0=0\quad \mbox{with} \quad T^ma_0 =T^mb_0=0.
\end{align*}
Extending the tensor fields $ b^{l,l-(m-1)}$  by $0$ outside of $\overline{\Omega}$, then choosing $ a_0=y_2^{m-1}\,g(y'') e^{-\I\lambda(y_1+\I y_2)}$ and $ b_0= y_2^{m-1-p} $ for $ 0\le p\le m-1 $, varying  $g$ to be any smooth function in the $y''$ variable and performing integral in the $y_1$ variable,  we obtain for almost every $y''$ that      
\begin{align*}
    \sum\limits_{l=m-1}^{2m-2} c_{l-(m-1)}\, e^{\lambda y_2}\, I^{2m-2-l+(m-1-p)} \widehat{B}^{l,l-(m-1)}(\lambda,y_2,y'')d y_2=0 \quad \mbox{for all $p$ with}\quad 0\le p\le m-1,
\end{align*}
where $B^{l,l-(m-1)} $ can be expressed in terms of $b^{l,l-(m-1)}$ as it was done in  \eqref{sum_of_lower_order_tensor}. At this stage, utilizing Lemma \ref{lem:sum_of_mrt} we deduce that $  I^{2m-2-l} \widehat{B}^{l,l-(m-1)}=0 $ for every $l$ satisfying $ m-1\le l\le 2m-2. $
This along with Lemma \ref{kernel_mrt_sphere_bundle} yields
$ \widehat{b}^{l,l-(m-1)}(0,x')=0$ for $ m-1 \le l\le 2m-2$. We now use induction to conclude that for all non-negative integers $p$, $ \frac{\D^p }{\D \lambda^p}\widehat{b}^{l,l-(m-1)}(\lambda,x')=0$ , where $ m-1 \le l\le 2m-2 $, which is similar to the line of argument presented in the previous steps; see for instance  [\eqref{eq_4.11}-\eqref{vanishing_derivative_lambda}]. Combining this with Paley-Wiener theorem we obtain $ b^{l,l-(m-1)}(x)=0$ , where $ m-1 \le l\le 2m-2 $.

Next we consider the coefficients multiplying \eqref{integral_identity_1} by  $ h^{m-2}$ and letting $h\rightarrow 0$. In this case, we will obtain the next  identity by replacing $ m$ by $ m-1$ in \eqref{eq_4.19}.  \begin{align*}
  \sum\limits_{l=m-2}^{2m-4}  \int\limits_{\Omega} b^{l,k_l}\,(e_1+\I\eta)_{i_1}\cdots (e_1+\I\eta)_{i_{l-2k_l}} T^{k_l}\,a_0\,b_0=0,
\end{align*}%
where $ m-2-k_{m-2}=\cdots=2m-4-k_{2m-4}= A=m-2 $. Iterating similar steps as above we can infer that $ b^{l,l-(m-2)}=0$ for all $l $ with $ m-2\le l \le 2m-4$.
In doing so, after finitely many steps, we conclude  
\eqref{recover_up_to_m-1}. Now the proof of Theorem \ref{th:main_theorem_1} is complete combining  all the steps (\textbf{Step 1},\textbf{ Step 2} and \textbf{ Step 3}).
\end{proof}
 \begin{lemma}\label{kernel_mrt_sphere_bundle}
    Let $f(x_1,x')$ be a bounded symmetric $m$-tensor field in $\Rn$, compactly supported in $x'$ variable. Suppose for all unit vectors $ \eta\perp  e_1$, writing $x' = (x_2,x'')$ where $x_2$ is the direction of $\eta$ and $x''$ are orthogonal directions, we have that
    \begin{align*}
       \int\limits_{\Rb} x_2^{m}\, f_{i_1\cdots i_m}(0,x_2,x'') \left(\prod_{j=1}^{m}(e_1+\I \eta)_{i_j}\right) \D x_2& = 0, \quad \mbox{for a.e. } x''.
    \end{align*}
 Then, 
 \begin{align*}
    f(0,x')=   \begin{cases}
        & i_{\d} v(0,x') \quad \mbox{for} \quad m\ge 2\\
         & 0 \quad \hspace{12mm} \mbox{for} \quad m =0,1
     \end{cases}
 \end{align*}
 where $v$ is a symmetric $m-2$ tensor field  compactly supported in $x'$ variable. 
In addition, if we assume that $f$ satisfies $j_{\d}f=0$  then $ f(0,x')=0.$
 
\end{lemma}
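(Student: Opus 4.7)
The argument rests on the algebraic identity
\[
(e_1 + \I\eta)\cdot(e_1 + \I\eta) = |e_1|^2 - |\eta|^2 + 2\I\, e_1\cdot\eta = 0,
\]
valid whenever $\eta \perp e_1$ is a unit vector. Thus $\zeta := e_1 + \I\eta$ is a null vector for the complex bilinear form, and this will let me peel off the ``potential part'' $i_\d(\cdot)$ of $f(0,\cdot)$ for free.

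First I invoke the trace-free decomposition \eqref{trace_free_decomposition} to write $g(x') := f(0,x') = v + i_\d w$, with $v$ the $j_\d$-trace-free component and $w$ a symmetric $(m{-}2)$-tensor on $\{x_1 = 0\}$. Contracting $(i_\d w)_{i_1\cdots i_m}$ with $\zeta^{i_1}\cdots\zeta^{i_m}$ produces a factor $\delta_{i_p i_q}\zeta^{i_p}\zeta^{i_q} = \zeta\cdot\zeta = 0$ in every term of the symmetrization, so the pairing vanishes pointwise. Hence the hypothesis of the lemma holds verbatim with $f(0,\cdot)$ replaced by its trace-free part $v$. Granting the claim $v = 0$ momentarily, one obtains $g = i_\d w$, which is the main conclusion (for $m\ge 2$) upon relabelling $w$ as $v$. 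The ``additional'' statement follows at once: if $j_\d f = 0$ then $g$ is itself trace-free, so uniqueness of the decomposition forces $w = 0$, whence $g = v = 0$.

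The heart of the argument is therefore to show that a trace-free symmetric $m$-tensor $v$ satisfying the integral identity vanishes on $\{x_1 = 0\}$. Expanding $\zeta^{\otimes m}$ by the binomial theorem and using $\eta_1 = 0$ converts the hypothesis into
\[
\sum_{p=0}^{m} \binom{m}{p}\I^p\, (I^m v^{(p)})(\eta, x'') = 0,
\]
where the slice $v^{(p)}_{j_1\cdots j_p} := v_{\underbrace{1\cdots 1}_{m-p}\, j_1\cdots j_p}(0,\cdot)$ (for $j_1,\dots,j_p \in \{2,\dots,n\}$) is a symmetric $p$-tensor on the hyperplane and $I^m$ is the momentum ray transform of order $m$. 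The trace-free relations $v^{(p)} = -j_\d^{(n-1)} v^{(p+2)}$ obtained from $j_\d v = 0$ express the low-index slices in terms of the top slices $v^{(m)}$ and $v^{(m-1)}$. Combining this reduction with the MRT kernel characterization developed in Section \ref{sec:mrt} (which asserts the triviality of $\ker I^m$ on the appropriate class of trace-free tensor fields) yields $v^{(p)} = 0$ for every $p$, and hence $v(0,\cdot) = 0$.

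The low-rank cases $m = 0, 1$ follow the same scheme with no potential part available: for $m = 0$ the identity is just the vanishing of the $0$-th moment X-ray transform on the hyperplane, which is injective on compactly supported functions; for $m = 1$ the same MRT kernel theorem applied directly to $v = g$ yields $g = 0$. The main obstacle is precisely this MRT kernel characterization on tensor fields of varying orders: the null-vector reduction is a clean algebraic step, but the inversion of the momentum ray transform on (trace-free) tensor fields is substantive and constitutes the technical core of Section \ref{sec:mrt}.
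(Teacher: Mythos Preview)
Your plan matches the paper's intended argument (the paper itself only cites \cite[Lemma 3.7]{polyharmonic_mrt_application}, but the underlying proof is exactly the trace-free decomposition plus null-vector reduction you describe): decompose $f(0,\cdot)=g+i_\d v$ with $j_\d g=0$, use $(e_1+\I\eta)\cdot(e_1+\I\eta)=0$ to drop the $i_\d v$ term, expand $g_{i_1\cdots i_m}\zeta^{i_1}\cdots\zeta^{i_m}$ into the $e_1$-slices $\tilde g^{(p)}$, and then combine the MRT kernel description with the trace-free slice relations.

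The one place you are too quick is the last sentence, ``Combining this reduction with the MRT kernel characterization \ldots\ yields $v^{(p)}=0$.'' Section~\ref{sec:mrt} does \emph{not} assert that $\ker I^m$ is trivial on trace-free fields; Lemma~\ref{mrt_uniqueness_2} only says that $I^m F=0$ forces the top (even, resp.\ odd) slice to equal $-i'_\d$ applied to a specific combination of the lower ones. To close the argument you must pair this with the slice relations $j'_\d\tilde g^{(p)}=c\,\tilde g^{(p-2)}$ coming from $j_\d g=0$ via an $L^2$ inner product: writing $\langle \tilde g^{(m)},\tilde g^{(m)}\rangle = \langle \tilde g^{(m)}, -\sum_l i'^{l}_\d(\cdots)\rangle$ and moving $i'_\d$ to $j'_\d$ on the left produces a sum of \emph{nonnegative} terms equal to zero, forcing every $\tilde g^{(p)}=0$. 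Without this Plancherel-type positivity step the two ingredients do not by themselves imply vanishing, so make it explicit.
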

 We do not present the proof of this lemma as it follows from \cite[Lemma 3.7]{polyharmonic_mrt_application}. We end this section with the following  remarks. 
 
 \begin{remark}
 This remark suggests that there is a another  way of avoiding the gauge appearing in the Theorem \ref{main_theorem_3}. 
 Instead of assuming $ a^{2m-1}=0$, one can assume that  $ a^{l}=i_{\d} b^{l-2}$ for $ m\le l \le 2m-1$ in Theorem \ref{th:main_theorem_1} and conclude in a similar way that 
 %
       $ a^{j}=0$ for all $j$ with $  0\le j \le 2m-1. $
    
    
 \end{remark}
 
	     \begin{remark} \label{rmk_density_additional}
	   Corollary \ref{coro_1} gave a density result for even integers. For odd integers $K \ge 0$, Theorem \ref{th:main_theorem_1} implies the following (non-optimal) result: the set \[\mathcal{A}_K= \mbox{span} \left \{ v\,\oplus_{|\alpha|\le K}D^{\alpha} u : \Delta^{\lb{\frac{K+2}{2}}+1} u= \Delta^{\lb{\frac{K+2}{2}}+1} v=0 \quad \mbox{in} \quad \Omega  \right\}\] is dense in the space $ \mathbf{S}^K\lr{C^{\infty}(\overline{\Omega})} = \oplus_{p=0}^K S^p\lr{C^{\infty}(\overline{\Omega})}$. 
	  
	   One could also ask if the linear span of the  set 
	   $$ \mathcal{A}_{p_1p_2}\coloneqq \left\{ D^{\alpha}v \otimes D^{\beta} u :   \Delta^{p_1+1} u= \Delta^{p_2+1} v=0 \,\, \mbox{in} \,\, \Omega,\,\,\mbox{for all multi-indices $\alpha$, $\beta$ such that}\,\, \lvert \alpha \rvert =p_1, \lvert \beta \rvert =p_2 \right\},$$
	   is dense in the space smooth symmetric tensor fields defined in $\overline{\Omega} $ of order $p_1+p_2$. Here $\otimes$ denotes the symmetric tensor product. 
	   
	   In particular when  $p_1=p_2=1$, this is asking if the linear span of tensor products of gradients of any two biharmonic functions is dense in the space of smooth symmetric two tensor fields. However, a similar density result is not true if one replace biharmonic functions by harmonic functions. The latter density question arises in the  linearized version of the \emph{boundary rigidity problem}; see \cite[Chapter 1]{Sharafutdinov_book} and \cite[Chapter 11]{PSU_book}. This question also appears while linearizing certain anisotropic elliptic PDE from the Cauchy data set; see \cite[Section 6]{Catalin_Ali}.
	   \end{remark}
 
 
 \section{Recovery of coefficients under an additional assumption }\label{sec:uniqueness_up_to_2m-1}
In this section we present the proof of Proposition \ref{main_theorem_2}  which will be the main ingredient to prove  Theorem \ref{main_theorem_3} in the following section.
To proceed further, we define two differential operators  on $C^{\infty}(S^m)$, the space of smooth symmetric tensor fields of order $m$. 
\begin{definition}\cite[Chapter 2]{Sharafutdinov_book}
The symmetric covariant derivative $\D: C^{\infty}(S^m)\rightarrow C^{\infty}(S^{m+1})$ is given by
\begin{align*}
    (\D f)_{i_1\cdots i_{m+1}} =\sigma(i_1\cdots i_{m+1}) \frac{\PD f_{i_1\cdots i_m}}{\PD x_{i_{m+1}}}, \quad 1\le i_1\cdots ,i_{m+1}\le n.
\end{align*}
\end{definition} 
\begin{definition}\cite[Chapter 2]{Sharafutdinov_book}
 The divergence operator $\d: C^{\infty}(S^{m+1})\rightarrow C^{\infty}(S^m)$ is given by
 \begin{align*}
(\d f)_{i_1\cdots i_{m}} =\sum\limits_{k=1}^{n}  \frac{\PD f_{i_1\cdots i_m k }}{\PD x_{k}}, \quad 1\le i_1\cdots ,i_{m}\le n.
 \end{align*}
\end{definition}
\noindent The operators $-\D$ and $\delta$ are dual to each other with respect to $L^2$ inner product. We also define $\D^k: C^{\infty}(S^m)\rightarrow C^{\infty}(S^{m+k}) $ by taking composition of $\D$ with itself $k$ times. Similarly we define $\delta^k: C^{\infty}(S^m)\rightarrow C^{\infty}(S^{m-k})$. In particular, 
\begin{align*}
    \delta^m f&= \sum\limits_{i_1,\cdots, i_m=1}^{n}\frac{\PD^m f_{i_1\cdots i_m}}{\PD x_{i_1}\cdots \PD x_{i_m}}, \quad \mbox{for any}\quad f\in C^{\infty}(S^m), \\
    \D^m\phi&= \frac{\PD^m \phi}{\PD x_{i_1}\cdots \PD x_{i_m}},\quad \qquad \qquad\mbox{for any}\quad \phi \in C^{\infty}(\overline{\Omega}).
\end{align*}
Our next result is as follows.

\begin{proposition}\label{main_theorem_2}
  Let $m \ge 1$. Suppose  
   \begin{align}\label{main_identity_4}
	        \int\limits_{\Omega} \sum_{l=0}^{2m-1}a^{l}_{i_1\cdots i_l} D^{i_1\cdots i_l}u\, v=0\quad \mbox{whenever} \quad  \Delta^m u= \Delta^m v=0. 
	    \end{align}
	\begin{itemize}
	    \item [1.] Then we have $ a^{2m-1}=0$ in $ \Omega$ if we additionally  assume
	    \begin{align}\label{div_trace_free-assumption}
      \delta^{2m-1} a^{2m-1}=0 \quad\mbox{and} \quad j_{\delta} a^{2m-1}=0.
  \end{align} As a result, this implies $ a^{j}=0$ in $ \Omega $ for $ 0\le j \le 2m-1$ by Theorem \ref{th:main_theorem_1}.
  \item[2.] Moreover, we also have  \begin{align}\label{highest_order_tensor}
	        a^{2m-1}= \D^{2m-1}\phi + i_{\d} a^{2m-1,1} \quad \mbox{where $\phi$ fulfils}\quad \PD^{l}_{\nu} \phi|_{\partial \Omega}=0\quad \mbox{for}\quad 0\le l\le 2m-2,
	        \end{align}
	    where $\phi$ is a scalar function and $a^{2m-1,1} $ is a symmetric tensor of order $2m-3$.
	\end{itemize}
\end{proposition}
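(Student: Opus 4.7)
The plan is to extract information about $a^{2m-1}$ from \eqref{main_identity_4} by inserting the CGO solutions of Lemma \ref{lm:cgo_forms} and passing to the leading order in $h$. Writing the analogue of \eqref{integral_identity_1} with the sum extended to $l=2m-1$, multiplying by $h^{2m-1}$, and letting $h\to 0$ using the remainder estimates, only the term with $l=2m-1$, $k_l=0$, $j=0$ survives; this gives
\begin{align*}
\int_{\Omega} a^{2m-1}_{i_1\cdots i_{2m-1}}(e_1+\I\eta)_{i_1}\cdots (e_1+\I\eta)_{i_{2m-1}}\,a_0(x)\,b_0(x)\,dx=0
\end{align*}
for every unit $\eta\perp e_1$ and every pair of transport-equation solutions $a_0,b_0$. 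Since $(e_1+\I\eta)\cdot(e_1+\I\eta)=0$, each $i_\delta$ factor in the trace-free decomposition $a^{2m-1}=\sum_{k=0}^{m-1}i_\delta^{k}b^{2m-1,k}$ contracts to zero, and only the fully trace-free component $b^{2m-1,0}$ enters the identity.

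For part 2 I would proceed as in \textbf{Step 1} of the proof of Theorem \ref{th:main_theorem_1}: choosing $a_0=y_2^{m-1}g(y'')e^{-\I\lambda z}$ and $b_0=y_2^{m-1}$ with $z=y_1+\I y_2$, taking the partial Fourier transform in $y_1$ and varying $g$, the identity above becomes a weighted momentum ray transform condition for $b^{2m-1,0}$ with weight $y_2^{2m-2}$, one lower than the tensor order since the transport equation caps the degree at $m-1$ on each side. Unlike the lower-order situations of Theorem \ref{th:main_theorem_1}, this weighted MRT is \emph{not} injective on trace-free $(2m-1)$-tensors: if $\phi\in C^{\infty}(\overline\Omega)$ satisfies $\PD_\nu^l\phi|_{\PD\Omega}=0$ for $0\le l\le 2m-2$, integration by parts $2m-1$ times shows $\D^{2m-1}\phi$ lies in the kernel, because the boundary terms vanish and the integrand contains $\bigl((e_1+\I\eta)\cdot\nabla\bigr)^{2m-1}(a_0b_0)=(2\PD_{\bar z})^{2m-1}(a_0 b_0)$, which annihilates every $a_0b_0$ that is a polynomial in $\bar z$ of degree $\le 2m-2$. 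The trace-free Helmholtz-type decomposition developed in Section \ref{sec:decomposition} then asserts that this exhausts the kernel modulo $i_\delta$ images, producing the scalar $\phi$ with the stated boundary vanishing and a $(2m-3)$-tensor $a^{2m-1,1}$ for which $a^{2m-1}=\D^{2m-1}\phi+i_\delta a^{2m-1,1}$.

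For part 1 I would plug the decomposition from part 2 into the two additional hypotheses to eliminate both summands. Using $\delta^{2m-1}\D^{2m-1}\phi=\Delta^{2m-1}\phi$, the assumption $\delta^{2m-1}a^{2m-1}=0$ yields $\Delta^{2m-1}\phi=-\delta^{2m-1}(i_\delta a^{2m-1,1})$, while $j_\delta a^{2m-1}=0$ (together with $j_\delta\D^{2m-1}\phi=\D^{2m-3}(\Delta\phi)$) couples the trace of $a^{2m-1,1}$ to $\Delta\phi$; combined with the Dirichlet-type boundary conditions on $\phi$ inherited from part 2, this gives a well-posed polyharmonic boundary value problem whose only solution is $\phi\equiv 0$. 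It then follows that $i_\delta a^{2m-1,1}=0$ in $\Omega$, hence $a^{2m-1}=0$, and Theorem \ref{th:main_theorem_1} finishes the argument by forcing $a^l=0$ for $0\le l\le 2m-2$. The main obstacle is the kernel characterization in part 2: the mismatch between the available weight $y_2^{2m-2}$ and the tensor order $2m-1$ puts the problem outside the scope of Lemma \ref{kernel_mrt_sphere_bundle}, and isolating the image of $\D^{2m-1}$ under the correct boundary vanishing is precisely what the trace-free Helmholtz-type decomposition of Section \ref{sec:decomposition} is designed to supply.
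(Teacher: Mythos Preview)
Your outline has the right ingredients but the logic is circular. The trace-free Helmholtz decomposition of Section~\ref{sec:decomposition} writes $a^{2m-1}=\tilde a+i_\delta v+\D^{2m-1}\phi$ with $j_\delta\tilde a=\delta^{2m-1}\tilde a=0$; it does \emph{not} assert that $\tilde a$ lies in the kernel of the integral identity only if $\tilde a=0$. To conclude part~2 from the decomposition you must show that the trace-free, divergence-free piece $\tilde a$ vanishes, and that is precisely the content of part~1. But your proposed proof of part~1 plugs the decomposition of part~2 back into the hypotheses \eqref{div_trace_free-assumption}, so the two parts lean on each other and neither is established.

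The paper breaks the circle by proving part~1 \emph{directly}, and this is the idea you are missing. One separates the even and odd components of $a^{2m-1}$ in the $\eta$-variable (by $\eta\mapsto-\eta$), so the identity with weight $y_2^{2m-2}$ gives vanishing of the $(2m-2)$-moment of $\widehat a^{2m-1,\mathrm{odd}}$ and of $\widehat a^{2m-1,\mathrm{even}}$ separately at $\lambda=0$. For the odd part, which is a genuine $(2m-1)$-tensor in $x'$, Sharafutdinov's kernel description yields $\widehat a^{2m-1,\mathrm{odd}}(0,x')=\D_{x'}^{2m-1}\psi$ for some compactly supported $\psi$. Now the trace-free hypothesis $j_\delta a^{2m-1}=0$ provides the algebraic identities $j_\delta'^{\,k}\widehat a^{2m-1,2m-1}=c_k^{-1}\widehat a^{2m-1,2m-1-2k}$, and the divergence-free hypothesis at $\lambda=0$ gives $\delta_{x'}^{2m-1}\widehat a^{2m-1,2m-1}(0,x')=0$, which kills the pairing $\langle\widehat a^{2m-1,2m-1},\D_{x'}^{2m-1}\psi\rangle$. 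Combining these in the $L^2$ inner product produces an identity of the form $\|\widehat a^{2m-1,2m-1}\|^2+\sum_p c_p^{-1}\|\widehat a^{2m-1,2p+1}\|^2=0$ with positive $c_p$, forcing all odd-index components to vanish; the even components then fall to Lemma~\ref{kernel_mrt_sphere_bundle}. An induction on $\lambda$-derivatives and Paley--Wiener finish the argument. Only after part~1 is in hand does the paper apply the Helmholtz decomposition to obtain part~2, exactly as you sketched---but in that order.

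Your alternative route to part~1 (eliminate $a^{2m-1,1}$ via $j_\delta i_\delta$ and land on a ``well-posed polyharmonic BVP'' for $\phi$) is not obviously workable either: the resulting scalar equation $\Delta^{2m-1}\phi-\delta^{2m-1}i_\delta(j_\delta i_\delta)^{-1}\D^{2m-3}\Delta\phi=0$ is of order $4m-2$ with a nontrivial principal part, and you have only $2m-1$ boundary conditions on $\phi$; uniqueness would require a separate argument that you have not supplied.
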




\begin{proof}
\textbf{Step 1.} Extend $a^{2m-1}$ by zero to $\Rn$. We first prove that  \eqref{main_identity_4} implies
 \begin{align}\label{main_identity}
	       \int\limits_{\Rn} a^{2m-1}_{i_1\cdots i_{2m-1}} (e_1+\I\eta)_{i_1}\cdots (e_1+\I\eta)_{i_{2m-1}} a_0\, b_0=0, \quad \mbox{whenever} \quad T^ma_0= T^mb_0=0.
	    \end{align}\smallskip

This step can be proved by choosing $ u$ and $v$ to be CGO solutions as in \eqref{cgos2} and then multiplying \eqref{main_identity_4} by $h^{2m-1}$ and letting $ h\rightarrow 0$.\\

 \textbf{Step 2.} Here we show that \eqref{main_identity} implies that certain MRTs of 
    \begin{align*}
             \widehat{a}^{2m-1,odd} \coloneqq \sum\limits_{p=0}^{m-1} i'^{m-1-p}_{\d} \widehat{a}^{2m-1,2p+1}(0,y_2,y'')
           \quad \mbox{and}\quad
    \widehat{a}^{2m-1,even}  \coloneqq \sum\limits_{p=0}^{m-1} i'^{m-p}_{\d} \widehat{a}^{2m-1,2p}(0,y_2,y'') 
    \end{align*}
become $0$.
Recall that $\hat{a}$ denotes the Fourier transform of $a$ in $x_1$, and note that $\widehat{a}^{2m-1,odd}$ and $\widehat{a}^{2m-1,even}$ are tensor fields of order $2m-1$ and $2m-2$ respectively. Also $i'_{\d}$ and $j'_{\d}$ are  same as \eqref{def_of_i} and \eqref{def_of_j} respectively. However in this case the indices vary from $2$ to $n$.
\\

Similar to \eqref{sum_of_lower_order_tensor}) we next  write 
 \begin{align*}
   &a^{2m-1}_{i_1\cdots i_{2m-1}} (e_1+\I\eta)_{i_1}\cdots (e_1+\I\eta)_{i_{2m-1}}\nonumber\\
   &= \sum\limits_{p=0}^{2m-1} a_{i_1\cdots i_p}^{2m-1,p}\, \eta_{i_1} \cdots \eta_{i_p}, \quad 2\le i_1,\cdots,i_p\le n \quad \mbox{for each} \quad 0\le p \le 2m-1.
\end{align*}%
where $ a_{i_1\cdots i_p}^{2m-1,p} = (\I)^p\, \binom{2m-1}{p}\, a_{i_1\cdots i_p1\cdots 1}^{2m-1}= c_{p}\, (\I)^{p}\, a_{i_1\cdots i_p1\cdots 1}^{2m-1} $.
 This together with \eqref{main_identity} implies
 \begin{align*}
     \int\limits_{\Rn}\sum\limits_{p=0}^{2m-1} a_{i_1\cdots i_p}^{2m-1,p}\, \eta_{i_1} \cdots \eta_{i_p} a_0\, b_0=0. 
 \end{align*}
 Next we choose $a_0= y_2^{m-1}\,g(y'') e^{-\I\lambda(y_1+\I y_2)},\quad b_0= y_2^{m-1}$. Then utilizing the arbitrariness of $g$ and taking partial Fourier transform in the first variable, we obtain
 \begin{align}\label{eq_5.8}
      \int\limits_{\R} e^{\lambda y_2}\, \sum\limits_{p=0}^{2m-1} \widehat{a}_{i_1\cdots i_p}^{2m-1,p}(\lambda,y_2,y'')\, \eta_{i_1} \cdots \eta_{i_p}\, y^{2m-2}_2\, dy_2=0.
 \end{align}
 Next, we specify  $\lambda=0$ in above, after that we  replace $ \eta$ by $-\eta$  and again put $\lambda=0$ in above. As a result, we obtain following two equations.
 \begin{align*}
 0=\begin{cases}
     \int\limits_{\R}  \sum\limits_{p=0}^{2m-1} \widehat{a}_{i_1\cdots i_p}^{2m-1,p}(0,y_2,y'')\, \eta_{i_1} \cdots \eta_{i_p}\, y^{2m-2}_2\, dy_2\\
      \int\limits_{\R}  \sum\limits_{p=0}^{2m-1} (-1)^p\, \widehat{a}_{i_1\cdots i_p}^{2m-1,p}(0,y_2,y'')\, \eta_{i_1} \cdots \eta_{i_p}\, y^{2m-2}_2\, dy_2.
       \end{cases}
 \end{align*}
 We add and subtract above two relations. This entails
 \begin{align*}
     \int\limits_{\R}  \sum\limits_{p=0}^{2m-1} (1\pm (-1)^p)\, \widehat{a}_{i_1\cdots i_p}^{2m-1,p}(0,y_2,y'')\, \eta_{i_1} \cdots \eta_{i_p}\, y^{2m-2}_2\, dy_2=0. 
 \end{align*}
 This allows us to separate even and odd modes of $a^{2m-1}$. After a re-indexing this entails
\begin{align*}
0=\begin{cases}
      \int\limits_{\R}  \sum\limits_{p=0}^{m-1} \widehat{a}_{i_1\cdots i_{2p+1}}^{2m-1,2p+1}(0,y_2,y'')\, \eta_{i_1} \cdots \eta_{i_p}\, y^{2m-2}_2\, dy_2 \\
   \int\limits_{\R}  \sum\limits_{p=0}^{m-1} \widehat{a}_{i_1\cdots i_{2p}}^{2m-1,2p}(0,y_2,y'')\, \eta_{i_1} \cdots \eta_{i_p}\, y^{2m-2}_2\, dy_2.
\end{cases}
\end{align*}
Utilizing   \Cref{increasing_order}, the next identities follow from the  definition of $ a^{2m-1,odd}$ and $ a^{2m-1,even}$.
\begin{align}
     \int\limits_{\R}   \widehat{a}_{i_1\cdots i_{2m-1}}^{2m-1,odd}(0,y_2,y'')\, \eta_{i_1} \cdots \eta_{i_{2m-1}}\, y^{2m-2}_2\, dy_2=0, \label{mrt_of_odd_mode}\\
    \int\limits_{\R}   \widehat{a}_{i_1\cdots i_{2m-2}}^{2m-1,even}(0,y_2,y'')\, \eta_{i_1} \cdots \eta_{i_{2m-2}}\, y^{2m-2}_2\, dy_2=0. \label{mrt_of_even_mode}
\end{align}
\smallskip
\textbf{Step 3.}  We now assume the additional conditions \eqref{div_trace_free-assumption}. Then we claim that 
\begin{align*}
     \widehat{a}^{2m-1}(0,x')=0. 
     \end{align*}\smallskip
Observe that, combining  Lemma \ref{translation_lemma} and \eqref{mrt_of_odd_mode} we have $   \int\limits_{\R}   \widehat{a}_{i_1\cdots i_{2m-1}}^{2m-1,odd}(0,y_2,y'')\, \eta_{i_1} \cdots \eta_{i_{2m-1}}\, y^{p}_2\, dy_2=0 $  for all $0\le p \le 2m-2$. In other words, vanishing the highest order moment is enough.  This along with  \cite[Theorem 2.17.2]{Sharafutdinov_book} entails that there is a distribution $\phi(0,x')$ satisfying 
 $ \widehat{a}^{2m-1,odd}(0,x')= d_{x'}^{2m-1}\phi(0,x').
 $
 This implies 
  \begin{align}\label{eq_5.19}
      \widehat{a}^{2m-1,2m-1} = -\sum\limits_{p=0}^{m-2} i'^{m-1-p}_{\d}\widehat{a}^{2m-1,2p+1} +d_{x'}^{2m-1}\phi.
 \end{align}
From \cite[Theorem 2.17.2]{Sharafutdinov_book} we also have 
 $
     \phi(0,x')=0$ outside of some ball in $\R^{n-1}$.

Using the fact that $\widehat{a}^{2m-1,odd}(0,x') \in L^2(\R^{n-1})$ and  $d_{x'}^{2m-1}\phi(0,x') = (\partial_{j_1 \ldots j_{2m-1}} \phi(0,x'))_{2 \leq j_1, \ldots, j_{2m-1} \leq n}$, it follows that $ \phi\in H^{2m-1}(\mathbb{R}^{n-1})$. Here $H^k(\mathbb{R}^{n-1})$ denotes the $L^2$ based Sobolev space of order $k$; see  e.g.\ \cite[Chapter 5]{Evans_pde_book} for precise definition.
 We next consider the  $L^2$ inner product of $ \widehat{a}^{2m-1,2m-1}$ in the following way:
 \begin{align*}
    \langle   \widehat{a}^{2m-1,2m-1}(0,x'),  \widehat{a}^{2m-1,2m-1}(0,x') \rangle =  \sum\limits_{i_1,\cdots i_{2m-1}=2}^{n} \int_{\mathbb{R}^{n-1}} \widehat{a}_{i_1\cdots i_{2m-1}}^{2m-1,2m-1} (0,x') \, \widehat{a}_{i_1\cdots i_{2m-1}}^{2m-1,2m-1} (0,x') \,d x'.
 \end{align*}
 This together with \eqref{eq_5.19} entails
 \begin{equation}\label{eq_5.21}
      \begin{aligned}
    & \langle   \widehat{a}^{2m-1,2m-1},  \widehat{a}^{2m-1,2m-1} \rangle\\ & \quad=  \left\langle   \widehat{a}^{2m-1,2m-1}, -\sum\limits_{p=0}^{m-2} i'^{m-1-p}_{\d}\widehat{a}^{2m-1,2p+1} +d_{x'}^{2m-1}\phi   \right\rangle\\
     &\quad= \left\langle   \widehat{a}^{2m-1,2m-1}, -\sum\limits_{p=0}^{m-2} i'^{m-1-p}_{\d}\widehat{a}^{2m-1,2p+1} \right \rangle  + \left\langle \widehat{a}^{2m-1,2m-1}, d_{x'}^{2m-1}\phi   \right\rangle\\
     &\quad= - \sum\limits_{p=0}^{m-2}\left\langle   \widehat{a}^{2m-1,2m-1},  i'^{m-1-p}_{\d}\widehat{a}^{2m-1,2p+1} \right \rangle  + \left\langle \widehat{a}^{2m-1,2m-1}, d_{x'}^{2m-1}\phi   \right\rangle.
 \end{aligned}
 \end{equation}
Now we  take the Fourier transform of $ j_{\d} a^{2m-1}=0$ in the $x_1$ variable and evaluate at the origin  to obtain
$ j_{\d} \widehat{a}^{2m-1}(0,x')=0. 
$
This implies 
\begin{align*}
     j_{\d}'\widehat{a}^{2m-1,2m-1} &= \sum\limits_{k=2}^n \widehat{a}_{i_1\cdots i_{2m-3}kk}^{2m-1,2m-1} = \sum\limits_{k=2}^n (\I)^{2m-1}\,\widehat{a}_{i_1\cdots i_{2m-3}kk}^{2m-1}= - (\I)^{2m-1} \widehat{a}_{i_1\cdots i_{2m-3}11}^{2m-1}\\
     &= -\,\binom{2m-1}{2m-3}^{-1}  (\I)^{2m-1-(2m-3)} \widehat{a}^{2m-1,2m-3}= \frac{1}{c_{2m-3}} \widehat{a}^{2m-1,2m-3}.
\end{align*}
Proceeding in this way we obtain
\begin{align}\label{lambda_zero_trace}
  j'^{m-1-p}_{\d}\widehat{a}^{2m-1,2m-1} &= \frac{1}{c_p}\,\widehat{a}^{2m-1,2p+1} \quad \mbox{for} \quad 0\le p\le m-2.   
\end{align}
Taking Fourier transform of the relation  $ \delta^{2m-1} a^{2m-1}=0$ with respect to $x_1$ variable gives
$
   \sum\limits_{k=0}^{2m-1}\binom{2m-1}{k}\, \lambda^{k} \, \delta_{x'}^{2m-1-k}  \widehat{a}^{2m-1}(\lambda,x') =0.
$ %
This for $\lambda=0$ entails
\begin{align*}
   \delta_{x'}^{2m-1}  \widehat{a}^{2m-1}(0,x') =0\implies \delta_{x'}^{2m-1}  \widehat{a}^{2m-1,2m-1}(0,x') =0.
\end{align*}%
Let $ \phi_N\in C_c^{\infty}(\mathbb{R}^{n-1})$ such that $ \phi_N \rightarrow \phi $ in $ H^{2m-1}$, then from above as $N \rightarrow \infty $ we see that \[ 0=(-1)^{2m-1}\langle \delta_{x'}^{2m-1}  \widehat{a}^{2m-1,2m-1},\phi_N\rangle =  \,\langle \widehat{a}^{2m-1,2m-1},d_{x'}^{2m-1}\phi_N \rangle \rightarrow \langle \widehat{a}^{2m-1,2m-1},d_{x'}^{2m-1}\phi \rangle.\] This implies $ \langle \widehat{a}^{2m-1,2m-1},d_{x'}^{2m-1}\phi \rangle=0$.
The combination of this along with \eqref{eq_5.21} and  \eqref{lambda_zero_trace}  implies
\begin{align*}
   \langle   \widehat{a}^{2m-1,2m-1},  \widehat{a}^{2m-1,2m-1} \rangle= - \frac{1}{c_p} \sum\limits_{p=0}^{m-2}\left\langle    \widehat{a}^{2m-1,2p+1}, \widehat{a}^{2m-1,2p+1} \right \rangle.
   \end{align*}
   To put it another way, it gives
   \begin{align*}
        \langle   \widehat{a}^{2m-1,2m-1},  \widehat{a}^{2m-1,2m-1} \rangle+\sum\limits_{p=0}^{m-2} \frac{1}{c_p} \left\langle    \widehat{a}^{2m-1,2p+1}, \widehat{a}^{2m-1,2p+1} \right \rangle=0.
   \end{align*}
  This in addition to the fact that  $c_p>0$ for $0\le p\le m-2$ implies  \begin{align*}
     \widehat{a}^{2m-1,2p+1}(0,x')=0 \quad \mbox{for} \quad 0\le p\le m-1.  \end{align*}%
     By  Lemma \ref{kernel_mrt_sphere_bundle} and $ j_{\delta} a^{2m-1}=0 $ from \eqref{mrt_of_even_mode} we obtain
    $
     \widehat{a}^{2m-1,even}(0,x')=0.
$
   This together with $j_{\d} a^{2m-1}=0$ implies
   $
\sum\limits_{p=0}^{m-1} \frac{1}{c_p} \,\left\langle \widehat{a}^{2m-1,2p}, \widehat{a}^{2m-1,2p} \right \rangle=0.
   $ We have that $c_p>0$  for all $p$ with $0\le p\le m-1$, this entails  
   $
     \widehat{a}^{2m-1,2p}(0,x')=0$ for all $ 0\le p\le m-1.$ 
  The combination of $ \widehat{a}^{2m-1,2p}(0,x')=  \widehat{a}^{2m-1,2p+1}(0,x')=0$ for all $p$ with $ 0\le p \le m-1$  implies
 $ \widehat{a}^{2m-1}(0,x')=0.  
  $ 
  \smallskip
  
  \textbf{Step 4.} We still assume the additional conditions in  \eqref{div_trace_free-assumption}, and show that $ a^{2m-1}=0 $.\\
  
  Since $ a^{2m-1}$ is compactly supported in the $x_1$  variable, this implies $ \widehat{a}^{2m-1}(\lambda,x')$ is analytic in $ \lambda$ by Paley-Wiener theorem. Therefore it is enough to show that 
\begin{align*}
      \frac{\D^l}{ \D\lambda^l} \widehat{a}^{2m-1}(0,x')=0\quad \mbox{ for all non-negative integers} \quad l.
  \end{align*}
  From previous step  we have $\widehat{a}^{2m-1}(0,x')=0 $. 
 We now argue by induction  and assume that \begin{align}\label{induction_hypo}
      \frac{\D^l}{ \D\lambda^l} \widehat{a}^{2m-1}(0,x')=0 \quad \mbox{ for all integers} \quad l \quad \mbox{with} \quad 0< l \le M.
  \end{align}  For $M+1$, differentiating \eqref{eq_5.8} $M+1$ times with respect to $\lambda$  gives
  \begin{align*}
  &    \sum\limits_{k=0}^{M+1} \int\limits_{\R} \binom{M+1}{k} \, y_2^{M+1-k} e^{\lambda y_2}\,   \frac{\D^{k}}{\D \lambda^k}\left(\sum\limits_{p=0}^{2m-1} \widehat{a}_{i_1\cdots i_p}^{2m-1,p}(\lambda,y_2,y'') \right)\, \eta_{i_1} \cdots \eta_{i_p}\, y^{2m-2}_2\, dy_2=0.
  \end{align*}
  We now specify $\lambda = 0$ in preceding equation and then utilize \eqref{induction_hypo} to conclude
   \begin{align}\label{M+1_derivative}
   \int\limits_{\R}      \sum\limits_{p=0}^{2m-1}\left(\frac{\D^{M+1}}{\D \lambda^{M+1}} \widehat{a}_{i_1\cdots i_p}^{2m-1,p}(0,y_2,y'') \right)\, \eta_{i_1} \cdots \eta_{i_p}\, y^{2m-2}_2\, dy_2=0.   
   \end{align}
  Differentiating  the relation $\sum\limits_{k=0}^{2m-1}\binom{2m-1}{k}\, \lambda^{k} \, \delta_{x'}^{2m-1-k}  \widehat{a}^{2m-1}(\lambda,x') =0$, $M+1$ times more with respect to $\lambda$ gives
  \begin{align}\label{eq_5.29}
      \sum\limits_{r=0}^{M+1} \sum\limits_{k=r}^{2m-1}\binom{2m-1}{k}\, \frac{k!}{(k-r)!}\lambda^{k-r} \, \frac{\D^{M+1-r}}{\D \lambda^{M+1-r}} \delta_{x'}^{2m-1-k}  \widehat{a}^{2m-1}(\lambda,x') =0.
  \end{align}
  From  \eqref{induction_hypo} we obtain 
  \begin{align*}
      \frac{\D^{M+1-r}}{\D \lambda^{M+1-r}} \delta_{x'}^{2m-1-k}  \widehat{a}^{2m-1}(0,x') =0  \quad \mbox{for} \quad 1\le r \le M+1 \quad \mbox{and} \quad 1\le k \le 2m-1.
  \end{align*}
  In the last relation above  we have used the fact that, $\frac{\D }{\D\lambda}$ commutes with $ \PD_{x'}$.  Setting $\lambda=0$ in \eqref{eq_5.29}, then utilizing above findings   we obtain  
$
    \frac{\D^{M+1}}{\D \lambda^{M+1}} \delta_{x'}^{2m-1}  \widehat{a}^{2m-1}(0,x') =0.
 $
  Since $j_{\d} a^{2m-1}=0$, this implies
   $
       j_{\d} \left(\frac{\D^{M+1}}{\D \lambda^{M+1}} \widehat{a}^{2m-1}(0,x')\right)=0.
   $
 Using the last two relations 
 and repeating the similar analysis as before for the integral \eqref{M+1_derivative} we obtain
 $ \frac{\D^{M+1}}{\D \lambda^{M+1}}  \widehat{a}^{2m-1}(0,x')=0.
  $
  This completes the induction step and implies
  \begin{align*}
      \frac{\D^l}{ \D\lambda^l} \widehat{a}^{2m-1}(0,x')=0\quad \mbox{ for all non-negative integers} \quad l.
  \end{align*}
Combining  this with Paley-Wiener theorem we obtain
  $ a^{2m-1}(x)=0 \quad\mbox{in}\quad \Omega.
  $
  In combining all of the above steps with Theorem \ref{th:main_theorem_1}, we conclude that $
      a^j(x)=0 \quad \mbox{for all} \quad 0\le j\le 2m-1.
  $
 To complete the proof of Proposition \ref{main_theorem_2}  we will use a decomposition result proved in Lemma \ref{trace_free_helmholtz decomposition}.\smallskip  
   
  \textbf{Step 5.} In this step we prove \eqref{highest_order_tensor}.\\


\noindent We first substitute  $u$ and $v$ from \eqref{cgos2} in the integral identity \eqref{main_identity_4} and obtain
 \begin{align*}
     \int\limits_{\Omega} \sum_{l=0}^{2m-1}a^{l}_{i_1\cdots i_l} D^{i_1\cdots i_l} \left(e^{\frac{(e_1 + \I \eta)\cdot x}{h}} \tilde{A} \right) \,  e^{-\frac{(e_1 + \I \eta)\cdot x}{h}} \tilde{B}=0.
 \end{align*}
 Multiplying above by $h^{2m-1}$  and letting $h\rightarrow 0$ we arrive at
 $ \int_{\Omega} a^{2m-1}_{i_1\cdots i_{2m-1}}\,(e_1+\I\eta)_{i_1}\cdots (e_1+\I\eta)_{i_{2m-1}} a_0\, b_0=0,$
 where $a_0 $ and $b_0$ solve the  transport equation
$     T^ma_0=T^mb_0=0.$
Thanks to Lemma \ref{trace_free_helmholtz decomposition} we can decompose $a^{2m-1}$ in the following way:
 \begin{align*}
  a^{2m-1}= \tilde{a}^{2m-1}+ i_{\delta} b^{2m-3}+\D^{2m-1}\phi\quad \mbox{with} \quad \PD_{\nu}^l\phi|_{\PD\Omega}=0 \quad \mbox{for}\quad l=0,1,\cdots,2m-2,
 \end{align*}%
 and $\tilde{a}^{2m-1}$ meets the following conditions
  $ \delta^{2m-1} \tilde{a}^{2m-1}=j_{\delta} \tilde{a}^{2m-1}=0 $ in $\Omega.$ This is known as \emph{trace free Helmholtz type decomposition} of symmetric  tensor fields proved in Section \ref{sec:decomposition}. Plugging this decomposition of $a^{2m-1}$ into $ \int_{\Omega} a^{2m-1}_{i_1\cdots i_{2m-1}}\,(e_1+\I\eta)_{i_1}\cdots (e_1+\I\eta)_{i_{2m-1}} a_0\, b_0=0$  and utilizing  $ \langle e_1+\I\eta,  e_1+\I \eta\rangle=0$  we obtain
 \begin{align}\label{eq_5.36}
      \int\limits_{\Omega} (\tilde{a}^{2m-1} +\D^{2m-1}\phi)_{i_1\cdots i_{2m-1}}  (e_1+\I\eta)_{i_1}\cdots (e_1+\I\eta)_{i_{2m-1}} a_0\, b_0=0. 
 \end{align}
 Since $ \phi$ fulfils $\PD_{\nu}^l\phi|_{\PD\Omega}=0$ for $ 0\le l\le 2m-2$, this along with integration by parts  entails
 \begin{align*}
    (-1)^{2m-1} \int\limits_{\Omega} (\D^{2m-1}\phi)_{i_1\cdots i_{2m-1}}  (e_1+\I\eta)_{i_1}\cdots (e_1+\I\eta)_{i_{2m-1}} a_0\, b_0
   =\int\limits_{\Omega}\phi\,  \sum\limits_{k=0}^{2m-1}\,\binom{2m-1}{k} T^{2m-1-k} a_0\, T^k b_0.
    \end{align*}
We next make use of transport equations $ T^ma_0=0$ and $T^mb_0=0$ to deduce $\int_{\Omega}\phi\,  \sum_{k=0}^{2m-1}\,\binom{2m-1}{k} T^{2m-1-k} a_0\, T^k b_0$. This implies $ \int_{\Omega} (\D^{2m-1}\phi)_{i_1\cdots i_{2m-1}}  (e_1+\I\eta)_{i_1}\cdots (e_1+\I\eta)_{i_{2m-1}} a_0\, b_0    =0.$
 Combining this with \eqref{eq_5.36} we arrive at 
 $ \int_{\Omega} \tilde{a}^{2m-1}_{i_1\cdots i_{2m-1}}  (e_1+\I\eta)_{i_1}\cdots (e_1+\I\eta)_{i_{2m-1}} a_0\, b_0=0.$   
 We next extend $ \tilde{a}^{2m-1}$ to be $0$ outside of $ \overline{\Omega}$ and obtain
 \begin{align*}
       \int\limits_{\Rn} \tilde{a}^{2m-1}_{i_1\cdots i_{2m-1}}  (e_1+\I\eta)_{i_1}\cdots (e_1+\I\eta)_{i_{2m-1}} a_0\, b_0=0, \quad \mbox{where $a_0$ and $b_0$ solve $T^m (\cdot)=0$}.
 \end{align*}
Since $\tilde{a}^{2m-1} $ satisfies the conditions in \eqref{div_trace_free-assumption}, utilizing previous steps  we obtain $ \tilde{a}^{2m-1}=0$.
 This implies $ a^{2m-1}= i_{\delta} b^{2m-3}+\D^{2m-1}\phi$, where $\phi$ satisfies $\PD_{\nu}^l\phi|_{\PD\Omega}=0$ for $ 0\le l\le 2m-2$.
 This completes the proof. 
  \end{proof}
    \section{Proof of theorem \ref{main_theorem_3}}\label{Gauge transformation}
    The proof of Theorem \ref{main_theorem_3} will be presented in this section. However, before we get to the proof of Theorem \ref{main_theorem_3}, let us have a look at the gauge transformation for the case of  $m=2$.
    \subsection{Gauge transformation} To this end we denote
\begin{align*}
    \Lc_{a^3,a^2,a^1,a^0} &= (-\Delta)^2  + a_{ijk}^3 D^{ijk}+ a_{ij}^2 D^{ij}+a_{i}^1 D^{i}+a^0, \\\
    \Lc_{0} &= (-\Delta)^2.
\end{align*}
Note that Cauchy data of $v$ and  $ e^{\phi} v$ up to order $3$ are the same if $ \phi \in C^4(\overline{\Omega})$ satisfies $\PD_{\nu}^j \phi|_{\partial \Omega}=0$ for $0 \leq j \leq 3$. Now we compute 
\begin{align*}
   e^{-\phi} \Lc_0 e^{\phi} v  &=  e^{-\phi} (-\Delta)^2 e^{\phi} v\\
   &=  (e^{-\phi} (-\Delta) e^{\phi})^2v \\
   &= (\Delta\phi + |\nabla \phi|^2+ 2 \nabla\phi\cdot \nabla +\Delta ) (\Delta\phi + |\nabla \phi|^2+ 2\nabla\phi\cdot \nabla +\Delta)v\\
  &= (\Delta )^2v +\Delta (\nabla\phi\cdot \nabla  v) + \Delta (f(\phi) v)+ 2 \nabla\phi\cdot \nabla (\Delta)v \\  & \qquad + 4\nabla\phi\cdot \nabla (\nabla\phi\cdot \nabla v) + 2 \nabla\phi\cdot \nabla (f(\phi) v)  +f^2(\phi) v+ 2f(\phi)\,  \nabla\phi\cdot \nabla v +f(\phi) \Delta v\\
   &=(\Delta)^2v +\underbrace{4 \nabla\phi\cdot \nabla \Delta v}_{\mbox{\tiny{third order term}}} + 4\nabla^2  \phi \cdot \nabla^2 +4 \nabla\phi \otimes \nabla \phi \cdot \nabla^2 + 2 f(\phi) \,\Delta \\& \qquad + 2\nabla \Delta \phi\cdot \nabla v+ 4\nabla\phi \cdot \nabla^2\phi\cdot \nabla v+ f(\phi) \nabla\phi\cdot\nabla v+2 \nabla f(\phi)\cdot\nabla v+ 4 f(\phi) \nabla \phi\cdot\nabla v  \\& \qquad+ v\Delta (f(\phi)) +2\,v  \nabla \phi \cdot \nabla f(\phi)  + v\,f^2(\phi),
\end{align*}
where $ f(\phi)= \Delta \phi +|\nabla \phi|^2$.

 The above calculation shows that if $\phi$ is as above, then 
\[
C_{a^3,a^2,a^1,a^0}=C_{0},
\]
where $a^3 = 4 i_{\delta} (\nabla \phi) $, and $a^2, a^1, a^0$ can be read off from the computation above. Such calculations can also be done for $m\ge 3$, but it will be somewhat cumbersome.
\subsection{Summary of results}
 Here we summarize few important results we have gotten so far from previous sections since  they will help us and  the reader when we apply them in this section.
  \begin{lemma}\label{summary_prop_4.3}
  Let $a^M \in C^{\infty}(\overline{\Omega})$ be a symmetric $M$ tensor field where $M\ge 0$ is an integer. 
  \begin{itemize}
  \item [1.] Let $M=2m-1$ and  assume that 
  \begin{align*}
       \int\limits_{\Rn} a^{2m-1}_{i_1\cdots i_{2m-1}} (e_1+\I\eta)_{i_1}\cdots (e_1+\I\eta)_{i_{2m-1}} a_0\, b_0=0 \quad \mbox{for all unit vectors $\eta \perp e_1$} 
  \end{align*}
  where  $ T^{m}a_0= T^{m}b_0=0$. Then we have  $a^{2m-1}=d^{2m-1}\phi +\id a^{2m-1,1} $, where $a^{2m-1,1} $ is a smooth symmetric tensor field of order $2m-3$, and $\phi \in C^{\infty}(\overline{\Omega})$  satisfies $\PD^l_{\nu} \phi|_{\PD\Omega}=0$ for $0\le l\le 2m-2$.
  \item [2.] Let $M=2m$ and  assume that 
  \begin{align*}
       \int\limits_{\Rn} a^{2m}_{i_1\cdots i_{2m}} (e_1+\I\eta)_{i_1}\cdots (e_1+\I\eta)_{i_{2m}} T a_0\, b_0=0 \quad \mbox{for all unit vectors $\eta \perp e_1$} 
  \end{align*}
  where  $ T^{m+1}a_0= T^{m+1}b_0=0$. Then we have  $a^{2m}=d^{2m}\phi_1 +\id a^{2m,1} $, where $a^{2m,1} $ is a smooth symmetric tensor field of order $2m-2$, and $\phi_1 \in C^{\infty}(\overline{\Omega})$  satisfies $\PD^l_{\nu} \phi_1|_{\PD\Omega}=0$ for $0\le l\le 2m-1$.
  \end{itemize}
  \end{lemma}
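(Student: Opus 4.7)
Both parts will follow the same template as Step 5 of the proof of Proposition \ref{main_theorem_2}, combined with the trace free Helmholtz type decomposition of Lemma \ref{trace_free_helmholtz decomposition}. The plan is, in each case, to decompose the given tensor field into a trace free divergence free summand, a summand of the form $i_{\delta}(\cdot)$, and a summand $d^{M}\phi$ with $\phi$ having the prescribed vanishing normal derivatives; then to kill the last two summands in the integral identity (using, respectively, the isotropy relation $\langle e_1+\I\eta,e_1+\I\eta\rangle=0$ and integration by parts combined with the transport equations for $a_0,b_0$); and finally to show that the remaining trace free divergence free piece must vanish by appealing to the MRT machinery already developed.

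For Part 1 I would write $a^{2m-1}=\tilde a^{2m-1}+i_{\delta} b^{2m-3}+d^{2m-1}\phi$ as in Lemma \ref{trace_free_helmholtz decomposition}, with $\tilde a^{2m-1}$ trace free and divergence free and $\PD_{\nu}^l\phi|_{\PD\Omega}=0$ for $0\le l\le 2m-2$. The $i_{\delta}$ contribution drops out because contracting $i_{\delta} b^{2m-3}$ with $(e_1+\I\eta)^{\otimes(2m-1)}$ produces the factor $\langle e_1+\I\eta,e_1+\I\eta\rangle=0$. For the $d^{2m-1}\phi$ contribution, integration by parts (which uses all $2m-1$ boundary conditions on $\phi$) converts the integrand, up to a constant, into $\phi\,T^{2m-1}(a_0 b_0)$; expanding by Leibniz yields a sum of terms $T^{2m-1-k}a_0\,T^k b_0$ for $0\le k\le 2m-1$, and the constraints $T^m a_0=T^m b_0=0$ force each summand to vanish (the $a_0$ factor dies for $k\le m-1$, the $b_0$ factor for $k\ge m$). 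Thus the hypothesis reduces to the same integral identity with $\tilde a^{2m-1}$ in place of $a^{2m-1}$, and Steps 2--4 of Proposition \ref{main_theorem_2}, applied to the zero extension of $\tilde a^{2m-1}$, give $\tilde a^{2m-1}=0$. Setting $a^{2m-1,1}:=b^{2m-3}$ then delivers the claim.

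For Part 2 the strategy is identical with $M=2m$. Decompose $a^{2m}=\tilde a^{2m}+i_{\delta} b^{2m-2}+d^{2m}\phi_1$ with $\PD_{\nu}^l\phi_1|_{\PD\Omega}=0$ for $0\le l\le 2m-1$. Isotropy again annihilates the $i_{\delta}$ term; integration by parts plus Leibniz now rewrites the $d^{2m}\phi_1$ term as a multiple of
\[
\int_{\Omega} \phi_1\sum_{k=0}^{2m}\binom{2m}{k}T^{2m+1-k}a_0\,T^k b_0,
\]
and since $T^{m+1}a_0=T^{m+1}b_0=0$ either the $a_0$ factor (when $k\le m$) or the $b_0$ factor (when $k\ge m+1$) vanishes in each summand. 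One is then left with the integral identity for the trace free divergence free tensor $\tilde a^{2m}$. The main obstacle is adapting Steps 2--4 of Proposition \ref{main_theorem_2} to accommodate the extra $Ta_0$ in the integrand: substituting $a_0=y_2^{m}g(y'')e^{-\I\lambda(y_1+\I y_2)}$ together with suitable $b_0=y_2^{m-p}$ and working out the effect of the extra $T$ must still produce, after integrating in $y_1$ and varying $g$, an MRT of the partial Fourier transform $\widehat{\tilde a}^{2m}(0,\cdot)$ of order $2m$, to which Lemma \ref{kernel_mrt_sphere_bundle} combined with the trace free and divergence free conditions applies; one also has to check that the Plancherel-type identity used in Step 3 of the proposition still separates the even and odd modes under this shift. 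Once this bookkeeping is verified, analyticity in $\lambda$ and Paley--Wiener yield $\tilde a^{2m}=0$, giving $a^{2m}=d^{2m}\phi_1+i_{\delta} a^{2m,1}$ with $a^{2m,1}:=b^{2m-2}$.
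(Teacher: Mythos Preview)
Your proposal is correct and follows essentially the same approach as the paper, which for Part 1 simply refers to Steps 2--5 of the proof of Proposition \ref{main_theorem_2} and for Part 2 asserts that a ``similar line of arguments'' works. Your Part 2 sketch is in fact slightly more detailed than the paper's, since you identify explicitly where the extra $Ta_0$ factor enters and what needs to be checked; both you and the paper leave the actual bookkeeping of the even/odd mode separation and the Plancherel-type identity for the $2m$-tensor case unwritten.
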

  \begin{proof}
    For first part we refer \textbf{Step 2-5} 
     in the proof of \Cref{main_theorem_2}. One can employ  similar line of arguments for second part as well.
  \end{proof}
  \begin{lemma}\label{lm:summary_1}
  Let $ a^l$ be the same as above such that $l\le m$.
      Suppose there holds 
  \begin{align*}
      \sum\limits_{j=1}^{m} \int\limits_{\Omega} a^{j}_{i_1\cdots i_j}\, (e_1+\I \eta)_{i_1} \cdots (e_1+\I \eta)_{i_j}\, T^{m-j}a_0b_0=0 \quad \mbox{for all unit vectors $\eta \perp e_1$}  
  \end{align*}
 where  $ T^{m}a_0= T^{m}b_0=0$. Then for some tensor $a^{j,1}$ of order $j-2$ we have 
 \begin{align*}
     a^{j} = d^{j}\phi_j +\id a^{j,1} \quad \mbox{where $\phi_j$ matches the boundary condition $ \PD^{l}\phi_j|_{\PD\Omega}=0$ for $ 0\le l\le j-1$}.
 \end{align*}
 Note that,    $a^{1,1}=0$.
  
  \end{lemma}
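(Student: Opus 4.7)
My plan is to adapt the inductive argument from Step 2 of the proof of Theorem \ref{th:main_theorem_1}, running the induction on $j$ downward from $j = m$ to $j = 1$ and using the second part of Lemma \ref{lem:sum_of_mrt} to separate contributions of tensors of different orders. To begin, apply the trace-free Helmholtz-type decomposition of Section \ref{sec:decomposition} to each $a^j$, writing $a^j = \tilde a^j + i_{\d} b^{j-2} + d^j \phi_j$, where $\tilde a^j$ is trace-free and divergence-free and $\partial_\nu^l \phi_j|_{\partial \Omega} = 0$ for $0 \le l \le j-1$. Substituting into the hypothesis, the $i_{\d} b^{j-2}$ pieces drop out because contracting with $(e_1+\I\eta)^{\otimes j}$ produces a factor $(e_1+\I\eta)\cdot(e_1+\I\eta) = 0$. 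For the $d^j \phi_j$ pieces, a direct contraction gives $(d^j \phi_j)_{i_1\cdots i_j}(e_1+\I\eta)^{i_1}\cdots(e_1+\I\eta)^{i_j} = 2^{-j} T^j \phi_j$, and integrating by parts $j$ times (boundary terms vanish by the boundary conditions on $\phi_j$) transfers $T^j$ onto $T^{m-j} a_0 \cdot b_0$; for every CGO-type choice of $a_0, b_0$ employed below this transferred integrand is annihilated, so the $d^j\phi_j$ contributions vanish.

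For the base case $j = m$, use $a_0 = g(y'') e^{-\I \lambda z}$, which satisfies $T a_0 = 0$ (in particular $T^m a_0 = 0$), together with $b_0 = 1$. Then $T^{m-j'} a_0 = 0$ for every $j' < m$, so the hypothesis collapses to $\int_\Omega \tilde a^m_{i_1\cdots i_m} (e_1+\I\eta)^{i_1}\cdots(e_1+\I\eta)^{i_m}\, g(y'')\, e^{-\I\lambda z}\,dx = 0$ for all $g$, $\lambda$, and $\eta \perp e_1$. Taking the partial Fourier transform in $x_1$, applying Lemma \ref{kernel_mrt_sphere_bundle}, and running the induction on $\lambda$-derivatives together with Paley-Wiener (exactly as in Steps 2--4 of the proof of Proposition \ref{main_theorem_2}) yields $\tilde a^m = 0$, hence $a^m = d^m \phi_m + i_{\d} a^{m,1}$. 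For the inductive step at level $j < m$, assume the decomposition has been established for $j' > j$, and take $a_0 = y_2^{m-j} g(y'') e^{-\I\lambda z}$ together with $b_0 = y_2^{k_2} h(y'')$ for $k_2 = 0, 1, \ldots, j-1$; all of these satisfy $T^m(\cdot) = 0$. A direct computation using $T = 2\partial_{\bar z}$ shows $T^{m-j'} a_0 = 0$ for $j' < j$ and $T^{m-j'} a_0 = \frac{(m-j)!}{(j'-j)!}\, \I^{m-j'}\, y_2^{j'-j}\, g(y'')\, e^{-\I\lambda z}$ for $j' \ge j$. Using the inductive decomposition of each $a^{j'}$ with $j' > j$, the $i_{\d}$ parts drop out by the null-vector identity and the $d^{j'} \phi_{j'}$ parts vanish after integration by parts because $T^{j'}(y_2^{j'-j+k_2}\, g(y'')\, h(y'')\, e^{-\I\lambda z}) = 0$ whenever $k_2 < j$. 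What remains is an identity for $\tilde a^j$ alone, to which Lemma \ref{lem:sum_of_mrt} and Lemma \ref{kernel_mrt_sphere_bundle} apply exactly as in the base case, forcing $\tilde a^j = 0$ and hence $a^j = d^j \phi_j + i_{\d} a^{j,1}$ with $a^{j,1} := b^{j-2}$. When $j = 1$, the tensor $a^{1,1}$ would have order $-1$ and so vanishes, giving $a^1 = d^1 \phi_1$.

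The main obstacle is the MRT-inversion step at each inductive stage. The family of test data $(a_0, b_0)$ described above naturally produces $y_2$-moments of order $k_2 = 0, 1, \ldots, j-1$ only, which is one moment short of the order-$j$ input required by the scalar form of Lemma \ref{kernel_mrt_sphere_bundle} applied to a tensor of order $j$. This gap is bridged by the divergence-free and trace-free constraints on $\tilde a^j$ inherited from the Helmholtz decomposition, through an application of Lemma \ref{translation_lemma}, exactly as in Step 3 of the proof of Proposition \ref{main_theorem_2}.
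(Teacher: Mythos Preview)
Your overall strategy is sound and closely parallels the paper's: apply the trace-free Helmholtz decomposition of Lemma~\ref{trace_free_helmholtz decomposition}, remove the $i_\delta$ pieces via the null vector $(e_1+\I\eta)$, remove the $d^j\phi_j$ pieces by integration by parts, and then kill the trace-free divergence-free remainders $\tilde a^j$ using the one-moment-short MRT argument from Steps~2--4 of Proposition~\ref{main_theorem_2}. Where you differ from the paper is in how you separate the different $j$: you run a downward induction with $a_0 = y_2^{m-j}g(y'')e^{-\I\lambda z}$ to annihilate all $j' < j$, whereas the paper fixes $a_0 = y_2^{m-1}g(y'')e^{-\I\lambda z}$, varies $b_0 = y_2^{m-1-p}$, and then invokes part~2 of Lemma~\ref{lem:sum_of_mrt} to decouple the resulting linear system of MRTs all at once. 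Both routes land on the same endpoint, namely $I^{j-1}\tilde a^j = 0$ together with $j_\delta\tilde a^j=\delta^j\tilde a^j=0$, and then Lemma~\ref{summary_prop_4.3} (equivalently the Step~3 argument) finishes.

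There is, however, a genuine gap in your base case. With $a_0 = g(y'')e^{-\I\lambda z}$ and $b_0=1$ you obtain only the \emph{zeroth} $y_2$-moment of $\widehat{\tilde a}^m(0,\cdot)$ at $\lambda=0$, not the $(m-1)$-th. Lemma~\ref{kernel_mrt_sphere_bundle} does not apply, and $I^0\tilde a^m=0$ only yields $\tilde a^m = dv$ for some $(m-1)$-tensor $v$; the constraints $j_\delta\tilde a^m=0$ and $\delta^m\tilde a^m=0$ are not enough to force $dv=0$ (you would need $\delta\tilde a^m=0$, not $\delta^m\tilde a^m=0$, to invoke solenoidal injectivity). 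Differentiating in $\lambda$ does not help either, since each $\lambda$-derivative mixes a higher $y_2$-moment of $\widehat{\tilde a}^m(0,\cdot)$ with lower moments of $\partial_\lambda\widehat{\tilde a}^m(0,\cdot)$, so you never isolate the needed moment of the undifferentiated tensor. The fix is immediate: in the base case take $b_0 = y_2^{k_2}h(y'')$ for $k_2=0,\ldots,m-1$ exactly as you do in the inductive step. Since $Ta_0=0$, all terms with $j'<m$ still vanish (as $T^{m-j'}a_0=0$), and $T^m(a_0 b_0)=a_0\,T^m b_0=0$ so the $d^m\phi_m$ contribution still drops out. You then have moments $0,\ldots,m-1$, which is precisely the one-moment-short situation handled by the argument you cite.
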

  \begin{proof}
    The proof relies on the careful choice of solution of transport equations $a_0$ and $b_0$ along with second condition of \Cref{lem:sum_of_mrt} and Lemma \ref{summary_prop_4.3}. The proof of \Cref{th:main_theorem_1} which uses the first hypothesis of \Cref{lem:sum_of_mrt}, involves a similar argument; see in  \textbf{Step 2} from equation \eqref{eq_3.14}-\eqref{eq_3.20}.
  \end{proof}

\subsection{Proof of \Cref{main_theorem_3}}
\begin{proof}
The proof of \Cref{main_theorem_3} will be divided into several steps. Note that, for simplicity, in the proof we may use same symbols such as $\phi_{0},\phi_{1},$ to express tensor fields of different order. \smallskip

\textbf{Step 1.} The case of $m=2$.\smallskip

By the assumption of Theorem \ref{main_theorem_3} we have
 \begin{align*}
	        \int\limits_{\Omega} \sum_{l=0}^{3}a^{l}_{i_1\cdots i_l} D^{i_1\cdots i_l}u\, v=0\quad \mbox{whenever} \quad  \Delta^2 u= \Delta^2 v=0. 
	    \end{align*}
We insert CGO solutions of  $u$ and $ v$ given in \eqref{cgos2} and derive
\begin{align}\label{main_identity_3}
     \int\limits_{\Omega} \sum_{l=0}^{3}a^{l}_{i_1\cdots i_l} D^{i_1\cdots i_l} \left(e^{\frac{(e_1 + \I \eta)\cdot x}{h}} \tilde{A} \right) \, e^{-\frac{(e_1 + \I \eta)\cdot x}{h}} \tilde{B}=0.
\end{align}
To continue we then  multiply \eqref{main_identity_3} by $h^{3}$ and  let $ h\rightarrow 0$ to obtain
\begin{align*}
    \int\limits_{\Omega} a_{i_1\cdots i_3}^{3} (e_1+\I \eta)_{i_1}\cdots (e_1+\I \eta)_{i_3}\, a_0\,b_0=0.
\end{align*}
Here $ a_0$ and $b_0$ solve $T^2a_0=T^2b_0=0$. This together with Proposition \ref{main_theorem_2} entails
\begin{align}\label{eq_6.3}
    a^3= \D^3 \phi_0 +i_{\d} a^{3,1}\quad \mbox{where $ \phi_0$ fulfils} \quad \PD^{l}_{\nu}\phi_0|_{\PD\Omega}=0\quad \mbox{for} \quad 0\le l \le 2
\end{align}
where  $ a^{3,1}$ is a smooth vector field in $ \overline{\Omega}$. Recall that, $a^{m,k}$ denotes a symmetric  tensor field of order $m-2k$ for every non-negative integers $m$ and $k$ for which $m-2k\ge 0$. After that, we see that the coefficient of $h^3$ is indeed zero by plugging \eqref{eq_6.3} into \eqref{main_identity_3}. However, it can be seen from the discussion at the beginning of this section that we have not yet obtained the exact gauge, which is $ i_{\d} (\nabla \phi)$, for some scalar $ \phi$. This differs from  inverse problems involving second order elliptic partial differential equations, where the gauge can be obtained in a single step by multiplying a particular integral equation by $h$ and then letting $h\rightarrow 0$. The form of $a^3$ presented in \eqref{eq_6.3}  can eliminate the coefficient of $h^3$, but not the one corresponding to $h^2$. As a result, multiplying (5.1) by $h^2$ and setting $h\rightarrow 0$ yields
    \begin{align*}
       3 \int\limits_{\Omega} a_{i_1\cdots i_3}^{3} (e_1+\I \eta)_{i_1} (e_1+\I \eta)_{i_2} \, \PD_{x_{i_3}}a_0\, b_0  +  \int\limits_{\Omega} a_{i_1 i_2}^2\, (e_1+\I \eta)_{i_1}\,  (e_1+\I \eta)_{i_2} a_0\, b_0=0.
        \end{align*}
       This together with \eqref{eq_6.3}   implies
        \begin{align}\label{eq_6.4}
     0=  &  3 \int\limits_{\Omega} \PD^3_{i_1 i_2 i_3} \phi_0\,(e_1+\I \eta)_{i_1} (e_1+\I \eta)_{i_2} \, \PD_{x_{i_3}}a_0\, b_0  +  \int\limits_{\Omega} a_{i_1 i_2}^2\, (e_1+\I \eta)_{i_1}\,  (e_1+\I \eta)_{i_2} a_0\, b_0\nonumber\\& \quad + \int\limits_{\Omega} a^{3,1}_i\, (e_1+\I \eta)_i \,T a_0\, b_0.
    \end{align}
   Combining   integration by parts, boundary conditions of $\phi_0$ and $ T^2a_0=T^2b_0=0$ from above we derive 
        \begin{align*}
    \frac{3}{4}\,\int\limits_{\Omega} \PD_{ i_3} \phi_0\, \PD_{x_{i_3}}Ta_0\, Tb_0  +  \int\limits_{\Omega} a_{i_1 i_2}^2\, (e_1+\I \eta)_{i_1}\,  (e_1+\I \eta)_{i_2} a_0\, b_0+ \int\limits_{\Omega} a^{3,1}_i\, (e_1+\I \eta)_i \,T a_0\, b_0&=0.
        \end{align*}
        The factor $\frac{1}{4}$ appears in the above expression because $ T= 2 (e_1+\I \eta)\cdot \PD_x$.
        Now we choose $ a_0 =x \cdot \eta$. This gives $ Ta_0 =2\I$ and $ \PD_{x_{i_3}} Ta_0=0$ for $1\le i_3\le n$.
        This implies
        \begin{align*}
         \int\limits_{\Omega} a_{i_1 i_2}^2\, (e_1+\I \eta)_{i_1}\,  (e_1+\I \eta)_{i_2} (x \cdot \eta)\, b_0+ 2\I\, \int\limits_{\Omega} a^{3,1}_i\, (e_1+\I \eta)_i \, b_0&=0.   
        \end{align*}
       We next use Lemma \ref{lm:summary_1} for $m=2$ to obtain $ a^2 =\D^2\phi_1 +i_{\d} a^{2,1}$ and $b^1= \D \phi_2$.  Substituting this into above integral identity and using the boundary conditions of $\phi_1$ and $\phi_2$ we arrive at 
         $$ a^{3,1}= \D \phi_1 \quad \mbox{and} \quad a^2 =\D^2\phi_1 +i_{\d} a^{2,1}$$
         where $ \phi_1$ satisfies $ \PD^l_{\nu}\phi_1|_{\PD\Omega}=0$ for $ 0\le l\le 1$.
         Inserting  the above form of $ a^{3,1}$ and $a^2$  in \eqref{eq_6.4}   and then performing integration by parts we obtain
       $
            \int_{\Omega}  \phi_0\,\left(\, \Delta Ta_0\, Tb_0+ \left\langle \nabla T  a_0,\nabla T b_0\right\rangle\,\right)=0.$
        Extending $ \phi_0=0$ outside $ \overline{\Omega}$  and doing the change of variable $ x\mapsto y$ we obtain
        $
           \int_{\Rn}\phi_0\,\big( \Delta Ta_0\, Tb_0+ \langle \nabla T  a_0,\nabla Tb_0 \rangle\big)  =0.
       $%
       We choose $ Ta_0= y_2$ and $ Tb_0= g(y'') \, e^{-\I \lambda (y_1+\I y_2)}$ for all $\lambda$ with $\lambda \neq 0$ and $g$ is any smooth function in $y''$ variable. This implies \[ \Delta Ta_0=0, \quad \nabla Ta_0= (0,1,0,\cdots,0)\quad \mbox{and} \quad \nabla Tb_0= e^{-\I \lambda (y_1+\I y_2)}\lr{-\lambda\I\, g(y''),\lambda g(y''),\nabla_{y''} g(y'')} .  \]
       Inserting above into the integral identity $ \int_{\Rn}  \phi_0\,\lr{\, \Delta Ta_0\, Tb_0+ \langle \nabla T  a_0,\nabla Tb_0 \rangle  }=0 $ and then varying $g$ and taking partial Fourier transform in the first variable, we obtain for almost every $y''$ that
           $ \int_{\mathbb{R}}  \widehat{\phi}_0(\lambda,y_2,y'')\, e^{\lambda y_2} \, dy_2=0$ for all $ \lambda$ with $\lambda\neq 0.$ 
Since $y_2$ varies in a compact set,  one can let $\lambda \rightarrow 0$ using Lebesgue dominated convergence theorem  and  obtain
   $   \int_{\mathbb{R}}  \widehat{\phi}_0(0,y_2,y'')\, \, dy_2=0.$
 This along with uniqueness of the  ray transform \cite[Chapter 2]{Sharafutdinov_book} implies   $ \phi_0(0,x')=0$. One can now repeat arguments used before to show that  $ \frac{\D^l}{\D \lambda^l} \phi_0(\lambda,x')|_{\lambda=0}=0$ for all non-negative integers $l$. This implies $\phi_0=0$ by Paley-Wiener. As a result we obtain
\begin{align*}
\begin{cases}
    a^3= i_{\d} (\nabla \phi_1) \qquad \hspace{6.8mm} \mbox{with} \quad \phi_1|_{\PD\Omega}=0\\
    a^2=\frac{1}{2}\D^2\phi_1 +i_{\d} a^{2,1}  \quad \mbox{with} \quad  \phi_1|_{\PD\Omega}= \PD_{\nu}\phi_1|_{\PD\Omega} =0.
    \end{cases}
\end{align*}
Since $ \PD_{\nu}^{l} a^3|_{\PD\Omega}=0$ for $ 0\le l\le 3$, this implies $\PD_{\nu}^{l} \phi_1=0 $ on $\PD \Omega$ for $0\le  l\le 3$. Thus $ \phi_1$ satisfies the  conditions in \eqref{phi_conditions}.  
We now perform the gauge transformation in the third order perturbation and replace $a^3$ by $0$.  After gauge transformation  lower order terms will change accordingly. We denote them by $\tilde{a}^{l} $ for $l=0,1,2$. Note that $ \tilde{a}^l$'s are now depends on $ \phi_1$ and can be recovered using Theorem \ref{th:main_theorem_1}. In other words we obtain 
\begin{align*}
    \Lc\,(\cdot)= e^{-\phi_1}\,(-\Delta)^2 e^{\phi_1}(\cdot) \quad \mbox{where $\phi_1$ fulfils \eqref{phi_conditions} for $m=2$.}
\end{align*}
This completes the proof of Theorem \ref{main_theorem_3} for $m=2$.\smallskip

\textbf{Step 2.} The case of $m=3$.\smallskip

The proof for  $ m=3$  is similar to the case of $m=2$. Therefore, in this case we do not give the complete details of the proof. By the assumption we have
 \begin{align}\label{main_identity_modified}
	        \int\limits_{\Omega} \sum_{l=0}^{5}a^{l}_{i_1\cdots i_l} D^{i_1\cdots i_l}u\, v=0\quad \mbox{where $u$ and $v$ solve} \quad  \Delta^3 u= \Delta^3 v=0.
	    \end{align}
Inserting the CGO form of $u$ and $v$  given in  \eqref{cgos2} and  multiplying \eqref{main_identity_modified} 
by $h^{5}$ and then letting $ h\rightarrow 0$ we obtain from \Cref{main_theorem_2} that
   $  a^5= \D^5\phi_0+ i_{\d} a^{5,1}$, where $ \phi_0$ fulfils $  \PD^l_{\nu}\phi_0|_{\PD\Omega}=0$ for $ 0\le l\le 4.$
 Next we multiply \eqref{main_identity_modified} by $ h^{4}$ and let $h\rightarrow 0$ and repeat similar analysis for the case of $m=2$ to  deduce
 $a^5= i_{\d} \D^3\phi_1 + i^2_{\d} a^{5,2},\,
    a^{4}=  \D^4 \phi_1+ i_{\d} a^{4,1}$ and $
    \phi_0=0$,
 where $\phi_1$ satisfies the boundary conditions $\PD^l_{\nu}\phi_1=0$ for $0\le l\le 3$. We now consider coefficients of $ h^{-3}$. To do that, we multiply \eqref{main_identity_modified} by $ h^{3}$ and let $h \rightarrow 0$ to obtain
\begin{align}\label{eq_5.5}
 0=& \int\limits_{\Omega} a_{i_1i_2i_3}^3(e_1+\I \eta)_{i_1}  \cdots (e_1+\I \eta)_{i_3} \, a_0 b_0+   \int\limits_{\Omega} a^{4,1}_{i_1i_2}(e_1+\I \eta)_{i_1}  (e_1+\I \eta)_{i_2} \, Ta_0 b_0+  \int\limits_{\Omega} a_{i_1}^{5,2}(e_1+\I \eta)_{i_1} \, T^2a_0 b_0\nonumber\\& \quad+4\int\limits_{\Omega} \D_{i_1\cdots i_4}^4 \phi_1\, (e_1+\I \eta)_{i_1}  \cdots (e_1+\I \eta)_{i_3} \, (\PD_{i_4}a_0) b_0 + 3\int\limits_{\Omega} \D_{i_1\cdots i_3}^3 \phi_1\, (e_1+\I \eta)_{i_1}   (e_1+\I \eta)_{i_2} \,(T (\PD_{i_3} a_0)) b_0 \nonumber \\& \quad\quad+\int\limits_{\Omega} \D_{i_1\cdots i_3}^3 \phi_1\, (e_1+\I \eta)_{i_1}  \cdots (e_1+\I \eta)_{i_3} \, \Delta a_0 b_0.
 \end{align}
 We next analyze the last three terms in the above expression. To this end, we make use of integration by parts and $ \PD_{\nu}^{l}\phi_1|_{\PD\Omega}=0$ (where $0\le l\le 3$), then followed by  $ T^3a_0=T^3b_0=0$. We are not going to mention where we applied these assumptions because it will be clear from the context. As a result, we obtain 
 \begin{equation*}
     \begin{aligned}
     \begin{cases}
     \int\limits_{\Omega} \D_{i_1\cdots i_3}^3 \phi_1\, (e_1+\I \eta)_{i_1}  \cdots (e_1+\I \eta)_{i_3} \, \Delta a_0 b_0
  = -\frac{1}{8}\int\limits_{\Omega} \phi_1\,\left(3\, T^2 \Delta a_0\, Tb_0+3\, T \Delta a_0\, T^2b_0  \right),\\
   \int\limits_{\Omega} \D_{i_1\cdots i_3}^3 \phi_1\, (e_1+\I \eta)_{i_1}   (e_1+\I \eta)_{i_2} \,T (\PD_{i_3} a_0)\, b_0
  =-\frac{1}{4}\int\limits_{\Omega}  \phi_1\,  \Big[2\, \left\langle  \nabla  T^2 a_0,\nabla T b_0\right\rangle+ \left\langle  \nabla T a_0, \nabla T^2 b_0\right\rangle \Big]\\\hspace{8cm} - \int\limits_{\Omega}  \phi_1\, \Big[2\,  T^2 \Delta a_0 \, Tb_0+ T\Delta a_0\,T^2b_0) \Big],\\
  \int\limits_{\Omega} \D_{i_1\cdots i_4}^4 \phi_1\, (e_1+\I \eta)_{i_1}  \cdots (e_1+\I \eta)_{i_3} \, (\PD_{i_4}a_0) b_0
  =\frac{1}{8}\int\limits_{\Omega} \phi_1 \left( 3\,T^2 \Delta a_0\, Tb_0+3\, T\Delta a_0 \, T^2 b_0\right)\\ \hspace{8cm}+\int\limits_{\Omega} \phi_1 \left( 3\,\langle \nabla T^2  a_0, \nabla Tb_0\rangle +3\,\langle \nabla T\Delta a_0,\nabla T^2 b_0\rangle \right).
  \end{cases}
     \end{aligned}
 \end{equation*}
Choosing $a_0=x\cdot \eta$, we see that $Ta_0=2\,\I$. This implies $P(D)Ta_0=0$, where $P(D) $ is any differential operator. This along with \eqref{eq_5.5} implies  \begin{align}\label{eq_8.4}
    0= & \int\limits_{\Omega} a_{i_1i_2i_3}^3(e_1+\I \eta)_{i_1}  \cdots (e_1+\I \eta)_{i_3} \, a_0 \,b_0+ \int\limits_{\Omega} a^{4,1}_{i_1i_2}(e_1+\I \eta)_{i_1}  (e_1+\I \eta)_{i_2} Ta_0\, b_0.
 \end{align}
 The combination of this  with \Cref{lm:summary_1}  implies
 \begin{align*}
 \begin{cases}
      a^3= \D^3 \psi_1+ i_{\d} a^{3,1} \quad\hspace{2mm} \mbox{with} \quad \PD^l_{\nu}\psi_1|_{\PD\Omega}=0  \quad \mbox{for}\quad  0\le l \le 2, \\
      a^{4,1}= \D^2 \psi_2 + i_{\d} a^{4,2} \quad \mbox{with} \quad \PD^l_{\nu}\psi_2|_{\PD\Omega}=0  \quad \mbox{for}\quad 0\le l \le 1.
      \end{cases}
 \end{align*}
To proceed further we next establish a relation between $ \psi_1$ and  $ \psi_2$.
To do so, we substitute above relations into    \eqref{eq_8.4} and  use integration  by parts to conclude
 \begin{align*}
     \frac{1}{8} \int\limits_{\Omega} \psi_1\, ( 3T^2a_0\, Tb_0+ 3 Ta_0\, T^2b_0) - \frac{1}{4}\int\limits_{\Omega} \psi_2\, ( 2T^2a_0\, Tb_0+  Ta_0\, T^2b_0)=0. 
 \end{align*}
Since we choose $a_0= x\cdot \eta$, this implies  $ T^2a_0=0$ but $ Ta_0\neq 0$. Next choosing $b_0$ such that $T^2b_0\neq 0$, we obtain
 $ 2 \psi_2-3\psi_1=0$ in $ \Omega$. Next we choose $a_0$ such that $T^2a_0= c$, for certain non-zero constant and $b_0$ such that $Tb_0=0$. With these choices of $a_0$ and $b_0$ from \eqref{eq_5.5} we obtain $\int_{\Omega} a_{i_1}^{5,2}(e_1+\I \eta)_{i_1} \, T^2a_0 b_0=0$. This implies $a^{5,2}= \D\psi_3$, where $ \psi_3=0$ on $\PD\Omega$. Thus we have 
 \begin{align*}
     \begin{cases}
       a^3= \D^3 \psi_1+ i_{\d} a^{3,1}, \quad   a^{4,1}= \frac{3}{2}\D^2 \psi_1 + i_{\d} a^{4,2} \quad \mbox{with} \quad \PD^l_{\nu}\psi_1|_{\PD\Omega}=0  \quad \mbox{for}\quad  0\le l \le 2\\
      a^{5,2}= \D\psi_3\quad  \mbox{with}\quad  \psi_3|_{\PD\Omega}=0.  
     \end{cases}
 \end{align*}
 Substituting this into \eqref{eq_5.5} and then choosing $a_0$ and $b_0$ such that $T^2a_0=c$ for some non-zero constant $c$ and $ T^2b_0 = 0$, we obtain
$\int_{\Omega} (4\,\psi_3+3\,\psi_1) ( T^2a_0\, Tb_0) =0.$
This entails $ \psi_3=-\frac{3}{4} 3\psi_1$. Thus we obtain \begin{align*}
     a^3= \D^3 \psi_1+ i_{\d} a^{3,1}, \quad
      a^{4,1}&= \frac{3}{2}\D^2 \psi_1 + i_{\d} a^{4,2}, \quad 
      a^{5,2}= -\frac{3}{4}\D\psi_1. \end{align*}
      
      Again inserting above findings into \eqref{eq_5.5} and  utilizing integration by parts and boundary conditions of $\psi_1$, we see that the first three integrals in \eqref{eq_5.5} disappear. As a result we end up with integral identities involving $\varphi_1$ and one can iterate similar arguments presented in \textbf{Step 2.} to conclude $\varphi_1=0$ in $\Omega$.
    This  entails $a^5=  -\frac{3}{4}i^2_{\d}\D\psi_1,
          a^4= i_{\d} (\frac{3}{2}\D^2 \psi_1 + i_{\d} a^{4,2})$ and $
          a^3=\D^3 \psi_1+ i_{\d} a^{3,1},$
      where $ \psi_1$ satisfies the boundary conditions $\PD^l_{\nu}\psi_1|_{\PD\Omega}=0$ for $ 0\le l \le 2. $ We next use the assumption from Theorem \ref{main_theorem_3} that $ \PD^l_{\nu} a^5|_{\PD\Omega}=0$ for $ 0\le l\le 5$ to conclude $\PD^l_{\nu}\psi_1|_{\PD\Omega}=0$ for $ 0\le l \le 5$. 
      In this stage we perform the gauge transformation and substitute $ a^5=0$. Then the new set of coefficients can be recovered uniquely by Theorem \ref{th:main_theorem_1}. In particular we have \begin{align*}
          \Lc \,(\cdot) = e^{-\psi_1}(-\Delta)^3 e^{\psi_1} (\cdot) \quad \mbox{where $\psi_1$ satisfies \eqref{phi_conditions} for $m=3$.}
      \end{align*}
      This completes  proof of Theorem \ref{main_theorem_3} for the case of $m=3$.\smallskip
    
 \textbf{Step 3.} The case of $m\ge 4$.\smallskip
 
In this case  $ a^{2m-1}= i_{\d}^{m-1}A^1$, for some vector field $A^1\in C^{\infty}(\overline{\Omega})$ by the assumption of Theorem \ref{main_theorem_3}.  This implies we have the following integral identity.
 \begin{align}\label{eq_6.12}
	    \int\limits_{\Omega}A_i^1 D^{i}(-\Delta)^{m-1}u\, v+    \int\limits_{\Omega} \sum_{l=0}^{2m-2}a^{l}_{i_1\cdots i_l} D^{i_1\cdots i_l}u\, v=0\quad \mbox{where} \quad  \Delta^mu= \Delta^m v=0.
	    \end{align} 

We next choose suitable CGO solutions  of the polyharmonic operators given in Lemma  \eqref{lm:cgo_forms}. Note that, Laplace operator $(-\Delta)$ acting on such solutions can produce maximum one negative power of $h$; see \eqref{CGO_1}. Thus $ D_i (-\Delta)^{m-1}$ will produce  maximum $m$-th negative power of $h$ when applied to CGO solutions. Therefore, we will encounter the vector field $A^1$ when we multiply \eqref{eq_6.12} by $h^m$ and the integral $ \int_{\Omega}A_i^1 D^{i}(-\Delta)^{m-1}u\, v$ will disappear when we multiply it by $h^{m+k}$ for some integer $k\ge 1$ and  let $h\rightarrow 0$.  We start by multiplying \eqref{eq_6.12} by $h^{2m-2},\cdots, h^{m+1}$  respectively  and utilize Theorem \ref{th:main_theorem_1} to derive
 $
     a^{2m-l}= i^{m-l}_{\d} b^{2m-l,m-l}$ for certain smooth symmetric tensors $b^{2m-l,m-l}$ of order $l$ defined in $\overline{\Omega}$ for each $l$ with $ 2\le l\le m-1$.
Inserting these into \eqref{eq_6.12} and then  multiplying \eqref{eq_6.12} by $h^m$ we obtain
 \begin{align}\label{eq_6.13}
    \int\limits_{\Omega} A_{i}^1(e_1+\I \eta)_i \, T^{m-1}a_0\, b_0+ \sum\limits_{l=2}^{m} \int\limits_{\Omega} b_{i_1\cdots i_l}^{2m-l,m-l}\, (e_1+\I \eta)_{i_1} \cdots (e_1+\I \eta)_{i_l}\, T^{m-l}a_0\, b_0=0.
 \end{align}

 We next write  above integrals over $\mathbb{R}^n$ by extending the symmetric tensor fields by $0$ outside $ \overline{\Omega}$. Then we choose particular solution of the transport equation $T^ma_0=T^mb_0=0$. 
\noindent Now combination of these, \eqref{eq_6.13}  and \Cref{lm:summary_1}  entail  
  $ A^1=  \D\phi_1 $ and $ b^{2m-l,m-l}= \D^l\phi_l+i_{\d} b^{2m-l,m-l+1}$ for $ 2\le l \le m$,
where each $\phi_l$ for $ 1\le l \le m$ satisfies the boundary conditions $\PD^{l-1}_{\nu} \phi_l|_{\PD\Omega}=0$.  
Substituting above relations into \eqref{eq_6.13}  we obtain $\int_{\Omega}
     \sum_{l=1}^m \left\langle \D^l\phi_l,(e_1+\I\eta)^{\otimes l}\right\rangle \,T^{m-l}a_0 \, b_0=0$.
This along with integration by parts and the boundary conditions for $\phi_l$ implies $ \sum_{l=1}^m (-1)^l\,\phi_l\, \sum_{p=1}^l \binom{l}{p} \, T^{m-p}a_0\,T^p b_0=0.$
We next choose $b_0$ in such a way that $T^{p+1}b_0=0$ while $ T^pb_0\neq 0$ for $1\le p \le m-1$. The combination of this with $\sum_{l=1}^m (-1)^l\,\phi_l\, \sum_{p=1}^l \binom{l}{p} \, T^{m-p}a_0\,T^p b_0=0$ implies
 %
 $\sum_{l=p}^{m} \binom{l}{p}\, (-1)^l\, \phi_l=0$ for each $p$ with $ 1\le p \le m-1 .$
 We obtain  $m-1$ relations involving  $m$ unknown. 
 Fix $x_0\in \Omega$, then from above we obtain
 \begin{align*}
     \sum\limits_{l=p}^{m} \binom{l}{p}\, (-1)^l\, \phi_l(x_0)=0 \quad \mbox{for} \quad 1\le p \le m-1 .
 \end{align*}
 It is evident that the above matrix equation has one dimensional kernel. Therefore  $\phi_1(x_0)=\phi_2(x_0)=\cdots =\phi_m(x_0)=\phi(x_0)\, \mbox{(say)}$. Since  the matrix is independent of $x$, this implies one can vary $x_0$ and obtain $\phi_1(x)=\phi_2(x)=\cdots =\phi_m(x)=\phi(x) $ in $\Omega$. Therefore we obtain $ a^{2m-1} =i^{m-1}_{\delta}A^1= i^{m-1}_{\delta}(\nabla \phi)$ with $ \PD^{m-1}_{\nu} \phi|_{\PD\Omega} =0$. By the assumption of Theorem \ref{main_theorem_3} we have $ \PD^{l}_{\nu} a^{2m-1}|_{\PD\Omega} =0$ for $0\le l\le 2m-1$, this immediately gives the same for $ \phi$. Since $\phi$ satisfies \eqref{phi_conditions}, this implies we can now implement the gauge transformation and eliminate the  highest order perturbation $a^{2m-1}$. Lower order coefficients will change as a result and those terms can be retrieved using Theorem \ref{th:main_theorem_1}. Thus we obtain 
  \begin{align*}
       \Lc (\cdot)= e^{-\phi}(-\Delta)^m e^{\phi}\, (\cdot) \quad \mbox{where $\phi$ fulfils} \quad \PD^{l}_{\nu} \phi|_{\PD\Omega} =0 \quad\mbox{for}\quad 0\le l\le 2m-1.
  \end{align*}
  This completes the proof.
 \end{proof}
\subsection{Remarks about general case.} We wrap up this section with a discussion regarding how one might proceed in the general case.  
Since the operator \eqref{operator} we consider is not symmetric, the gauge we encounter in the term $a^{2m-1}$ will propagate up to $a^0$. 
One expects gauges in the different orders:
\begin{align*}
    a^{2m-1}&\quad \mbox{ involves first order derivatives of some scalar function}\\
    a^{2m-2}& \quad\mbox{ involves second  order derivatives of some scalar function}\\
    \vdots\\
    a^{m} & \quad \mbox{ involves $m$th order derivatives of some scalar function}
\end{align*}

Here is one possible approach to recover the coefficients:
\begin{enumerate}
    \item  Multiplying $h^{2m-1}$ in the integral identity  \eqref{integral_identity_1} one can infer that
\begin{align*}
    a^{2m-1}= \D^{2m-1}\phi_0+i_{\d}a^{2m-1,1} \quad \mbox{where $ \phi_0$ satisfies $ \PD^{l}_{\nu}\phi_0=0$ for $0\le l \le 2m-2$}.
\end{align*}
\item Next multiplying $h^{2m-2}$ in \eqref{integral_identity_1} we obtain
\begin{align*}
    a^{2m-2}= \D^{2m-2}\phi_1+i_{\d} a^{2m-2,1},\quad a^{2m-1,1}= c_1\D^{2m-3}\phi_1+i_{\d}a^{2m-1,2}, \quad \phi_0=0 
\end{align*}
where $\phi_1$ satisfies $\PD^{l}_{\nu}\phi_l=0$ for $ 0\le l \le 2m-3$.
\item In general at $k$-th stage one can expect the following
\begin{align*}
   a^{2m-l} = c_{k,l}\,i^{m-1}_{\d} \D^{2m-2k+l} \phi_k +i^{k+1-l}_{\d} a^{2m-l,k+1-l} 
\end{align*}
holds for $ 1\le l\le k$ and $ 1\le k\le m$ and $ \phi_{l}=0$ for $ 1\le l \le k-1$.
\item  If the above assertion is proved, then substituting $ k=m,l=1$ in the above we obtain
\begin{align*}
    a^{2m-1}= i^{m-1}_{\d} \nabla \phi_m \quad  \mbox{with} \quad \PD^l_{\nu}\phi_m=0 \quad l=0,1.
\end{align*}
\item At this point, one can perform the gauge transformation to replace $ a^{2m-1}$ by $0$. The new set of coefficients up to order $ 2m-2$ can be then recovered by Theorem \ref{th:main_theorem_1}. 
\end{enumerate}

\section{Momentum ray transforms}\label{sec:mrt}
Let
$ S^m=S^m(C^{\infty}_{c}(\mathbb{R}^n)) $ be the space of smooth compactly supported symmetric $ m $-tensor fields in $ \R^n $. For $ f\in S^m$ and for each non-negative integer $k$, the momentum ray transform (MRT)  is denoted by $\mathcal{J}^k $ and given by
\newcommand{\J}{\mathcal{J}}
\begin{align*}
    \mathcal{J}^kf(x,\xi)\coloneqq \int_{-\infty}^{\infty} t^k \, f_{i_1\cdots i_m}(x+t\xi)\, \xi^{i_1}\,\cdots \xi^{i_m}\, \D t \quad \mbox{for all} \quad (x,\xi)\in \Rn\times (\Rn \setminus\{0\}).
    \end{align*}
    
    These transforms were first introduced by Sharafutdinov \cite{Sharafutdinov_1986_momentum,Sharafutdinov_book}. For $k=0$, $\J^0f=\J f$ is the classical ray transform/ X-ray transform, which is well studied due to its potential application in different branches of science such as medical imaging, seismology, and inverse problems related to partial differential equations. For $m=0$, MRT appears when studying the inversion of the cone transform and conical Radon  transform, where the latter transforms have promising applications in Compton cameras; see \cite{kuchment_cone_trans}. 
We also need to define the MRT of $ F\in \mathbf{S}^m=S^0\oplus  S^1\oplus\cdots\oplus S^m $. Any such $F$ can be written uniquely as
\begin{equation}\label{definition_of_F_m}
\begin{aligned}
	F &\coloneqq\sum_{p=0}^{m}f^{(p)}=f^{(0)}_{i_0}+f^{(1)}_{i_1}\,\D x^{i_1} + \cdots+f^{(p)}_{i_1\cdots i_p}\D x^{i_1}\cdots \D x^{i_p}+\cdots+f^{(m)}_{i_1\cdots i_m}\D x^{i_1}\cdots \D x^{i_m}\\
    & \hspace{1.5cm}	\quad=\left( f^{(0)}_{i_0},f^{(1)}_{i_1},\cdots, f^{(m)}_{i_1\cdots i_m} \right)
\end{aligned}
\end{equation} which can be viewed as sum of a function, a vector field, and symmetric tensor fields with $ f^{(p)}\in S^p$ for each $0\leq p\leq m$. For any smooth and compactly supported $ F\in \mathbf{S}^m$ and for  all $ (x,\xi) \in \R^n \times \R^n\setminus \{0\}$ and for every integer $ k\ge0$  the momentum ray transform \cite{polyharmonic_mrt_application} is given by 
\begin{equation}
	\begin{aligned}\label{Eq4.1}
		J^{k}F(x,\xi) &\coloneqq\sum_{p=0}^{m}\,J^k\!f^{(p)}(x,\xi)\\
		&\coloneqq \int_{-\infty}^{\infty} t^k \left(f^{(0)}_{i_0}(x+t\xi
		)+ f^{(1)}_{i_1}(x+t\xi)\, \xi^{i_1}+\cdots+ f^{(m)}_{i_1\cdots i_m}(x+t\xi)\, \xi^{i_1}\,\cdots \xi^{i_m}\right)\D t.
	\end{aligned}
\end{equation}

\newcommand{\sm}{\mathbf{S}^m}
We now introduce the notion of MRT on the set $\sm(\mathcal{E}') $ consisting of compactly supported distributions in $\sm$. This will be done by using adjoints. The momentum ray transforms $ J^{k}:\mathbf{S}^m(\mathcal{E}') \rightarrow \mathcal{D}'(\Rn \times \Rn\setminus\{0\}) $ \cite[Definition 4.4]{polyharmonic_mrt_application} are defined as:
	\begin{equation*}
		\begin{aligned}
			\langle J^{k}F,\Psi\rangle \NT = \NT \langle F,(J^{k})^*\Psi\rangle = \sum\limits_{p=0}^{m}\langle f^{(p)}, ( J^{k} )^{*}_{p} \Psi\rangle \quad \mbox{where}\quad \Psi \in C_c^{\infty}(\Rn\times \Rn\setminus\{0\}), 
		\end{aligned}
	\end{equation*}%
and  $ (J^{k})^*\Psi  $ is given by
	\begin{align*}
		\notag ((J^{k})^*\Psi)(x)= \Bigg{(} &\int_{\Rn}\int_{\mathbb{R}}t^k\, \, \Psi(x-t\xi,\xi) \D t\,\D \xi,\cdots,
		\int_{\Rn}\int_{\mathbb{R}}t^k\, \, \xi^{i_1}\cdots \xi^{i_m}\, \Psi(x-t\xi,\xi) \D t\,\D \xi\Bigg{)},
	\end{align*}%
    	which is an element of  $\oplus_{p=0}^{m} C^{\infty}(S^p)$ and 
    	\[
    	(J^{k})^{*}_{p} \Psi= \int_{\Rn}\int_{\mathbb{R}}t^k\, \, \xi^{i_1}\cdots \xi^{i_p}\, \Psi(x-t\xi,\xi) \D t\,\D \xi.
    	\]

\subsection{Injectivity of MRT}
We denote $I^{k}F\coloneqq J^{k}F|_{\Rn\times \Sb^{n-1}}$ for every integers $k\ge 0$.
In this section we recall certain injectivity results of the operator $ F \longmapsto J^{m}F$ and $F\longmapsto I^{m}F $ from \cite{polyharmonic_mrt_application} without their proofs. 
\begin{lemma}\cite[Theorem 4.5]{polyharmonic_mrt_application}\label{uniquenes_result_mrt}
		Let $ F \in \mathbf{S}^m(\mathcal{E}')$ and $ m\ge 0$. Then  
		\begin{align*}
		     J^{k}F=0  \quad \mbox{holds for all integers $k$ with $0\le k \le m$} \quad \implies\quad F=0.
		\end{align*}
	\end{lemma}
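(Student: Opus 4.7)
The plan is to reduce the injectivity of $J^k$ on the mixed space $\mathbf{S}^m(\mathcal{E}')$ to the injectivity of the individual momentum ray transforms $\mathcal{J}^k$ acting separately on each symmetric $p$-tensor component $f^{(p)}$, and then to invoke the classical Sharafutdinov inversion theorem for the MRT on compactly supported tensor fields. The whole argument is driven by the fact that different tensor orders scale differently in the direction variable $\xi$.

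First, I would examine the behavior of $J^k F(x,\xi)$ under the dilation $\xi \mapsto \lambda \xi$ with $\lambda>0$. Performing the substitution $s=\lambda t$ in the defining integral \eqref{Eq4.1} and using the homogeneity of $\xi^{i_1}\cdots\xi^{i_p}$, one finds
\[
J^{k}F(x,\lambda\xi) \;=\; \sum_{p=0}^{m} \lambda^{\,p-k-1}\, \mathcal{J}^{k} f^{(p)}(x,\xi), \qquad \lambda>0.
\]
The exponents $p-k-1$ for $p=0,1,\dots,m$ are pairwise distinct, so $\{\lambda^{p-k-1}\}_{p=0}^{m}$ are linearly independent functions of $\lambda$ on $(0,\infty)$. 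Hence, if $J^kF=0$, then viewing the right-hand side as a generalized polynomial in $\lambda$ (or taking $\lambda$-derivatives and specializing at $\lambda=1$), each coefficient must vanish separately, so that $\mathcal{J}^{k}f^{(p)}=0$ for every $0\le p\le m$. Carrying this out at the level of distributions is a matter of pulling back the dilation $(x,\xi)\mapsto(x,\lambda\xi)$ on test functions $\Psi\in C_c^{\infty}(\R^n\times\R^n\setminus\{0\})$ and using the explicit form of $(J^{k})^{*}_{p}$ given in the preceding paragraphs.

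Combining this observation with the hypothesis that $J^{k}F=0$ for every $0\le k\le m$, I obtain that $\mathcal{J}^{k}f^{(p)}=0$ for all $0\le p\le m$ and all $0\le k\le m$. In particular, for each fixed $p\le m$ the transforms $\mathcal{J}^{0}f^{(p)},\mathcal{J}^{1}f^{(p)},\dots,\mathcal{J}^{p}f^{(p)}$ all vanish. At this point I would invoke Sharafutdinov's inversion theorem for momentum ray transforms (see \cite[Theorem~2.17.1, Ch.~2]{Sharafutdinov_book}), which states that a compactly supported symmetric $p$-tensor field is uniquely determined by the collection $\{\mathcal{J}^{k}f\}_{k=0}^{p}$. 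Applying this separately to each $p$ gives $f^{(p)}=0$ for $0\le p\le m$, and therefore $F=0$.

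The main obstacle is not the scaling identity, which is elementary, but rather ensuring that Sharafutdinov's inversion result is applied in the correct distributional setting, since $F$ is only a compactly supported distribution rather than a smooth field. This should be handled either by a standard mollification argument (convolving with a smooth approximate identity commutes with $\mathcal{J}^k$) or by the duality/adjoint formulation introduced just above the lemma, which is already tailored to $\mathbf{S}^m(\mathcal{E}')$. A minor secondary point is verifying that the separation of $\lambda$-powers is legitimate at the distributional level, which follows because the pull-back by a dilation is a continuous operation on $\mathcal{D}'(\R^n\times(\R^n\setminus\{0\}))$.
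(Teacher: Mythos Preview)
The paper does not actually prove this lemma; it explicitly states that the result is recalled from \cite{polyharmonic_mrt_application} without proof. So there is no ``paper's own proof'' to compare against.

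That said, your argument is correct. The homogeneity computation
\[
J^{k}F(x,\lambda\xi)=\sum_{p=0}^{m}\lambda^{\,p-k-1}\,\mathcal{J}^{k}f^{(p)}(x,\xi)
\]
is exactly right, and since the exponents $p-k-1$ are pairwise distinct for $p=0,\dots,m$, the vanishing of $J^{k}F$ forces $\mathcal{J}^{k}f^{(p)}=0$ for every $p$. Running this over $k=0,\dots,m$ gives, for each fixed $p\le m$, the vanishing of $\mathcal{J}^{0}f^{(p)},\dots,\mathcal{J}^{p}f^{(p)}$, and then the classical single-tensor MRT injectivity (Sharafutdinov) yields $f^{(p)}=0$. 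Your remarks about handling the distributional setting via mollification or the adjoint formulation are adequate; both routes work and this is not where the content lies. The scaling trick you use is in fact a very natural and efficient way to decouple the tensor ranks, and it is essentially how results of this type are proved in the MRT literature.
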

	\begin{lemma}\cite[Lemma 4.8]{polyharmonic_mrt_application}\label{translation_lemma}
Let $F \in \mathbf{S}^m(\mathcal{E}')$, then the operator $ J^{k}F$ satisfies the  following relation
	\begin{align*}
		\langle\xi,\frac{\PD}{\PD x} \rangle^{p}  J^{k}F=  \begin{cases}
			(-1)^p\,\binom{k}{p}\,p!\, J^{k-p}F \quad \mbox{if} \quad p\le k\\
			0 \hspace{3.3cm} \mbox{if} \quad p>k.
		\end{cases} 
	\end{align*}
	After taking restrictions on $ \Rn\times \mathbb{S}^{n-1}$ this gives
	\begin{align*}
	    	\langle\xi,\frac{\PD}{\PD x} \rangle^{p}  I^{k}F=  \begin{cases}
			(-1)^p\,\binom{k}{p}\,p!\, I^{k-p}F \quad \mbox{if} \quad p\le k\\
			0 \hspace{3.3cm} \mbox{if} \quad p>k.
		\end{cases} 
	\end{align*}
\end{lemma}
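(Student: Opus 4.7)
The plan is to convert the vector field $\langle \xi, \frac{\partial}{\partial x}\rangle$ acting on $J^{k}F$ into a $t$-derivative inside the defining integral, and then integrate by parts. First I would write out
\[
J^{k}F(x,\xi) = \sum_{p=0}^{m}\int_{-\infty}^{\infty} t^{k}\, f^{(p)}_{i_{1}\cdots i_{p}}(x+t\xi)\,\xi^{i_{1}}\cdots\xi^{i_{p}}\,dt
\]
and observe via the chain rule that $\sum_{j}\xi^{j}\partial_{x_{j}}[f^{(p)}(x+t\xi)]=\frac{d}{dt}[f^{(p)}(x+t\xi)]$. For smooth compactly supported $F$ the integrand vanishes for $|t|$ large, so boundary contributions in any integration by parts in $t$ disappear. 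The general distributional case $F\in\mathbf{S}^{m}(\mathcal{E}')$ then follows by testing against $\Psi\in C_{c}^{\infty}(\mathbb{R}^{n}\times\mathbb{R}^{n}\setminus\{0\})$ through the adjoint definition of $J^{k}$, since the identity is linear in $F$ and passes to the limit in any standard mollification.

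Next, for $k\ge 1$, a single integration by parts gives the base case
\[
\langle\xi,\tfrac{\partial}{\partial x}\rangle J^{k}F(x,\xi) = \sum_{p}\int t^{k}\tfrac{d}{dt}\bigl[f^{(p)}(x+t\xi)\,\xi^{i_{1}}\cdots\xi^{i_{p}}\bigr]dt = -k\,J^{k-1}F(x,\xi),
\]
which matches $(-1)^{1}\binom{k}{1}1!\,J^{k-1}F$. I would then proceed by induction on $p$: assuming $\langle\xi,\partial_{x}\rangle^{p-1}J^{k}F = (-1)^{p-1}\binom{k}{p-1}(p-1)!\,J^{k-p+1}F$ for $p-1\le k$, I apply one more copy of $\langle\xi,\partial_{x}\rangle$ and invoke the base case with $k$ replaced by $k-p+1$. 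The only bookkeeping is the combinatorial identity $\binom{k}{p-1}(p-1)!\,(k-p+1) = \binom{k}{p}p!$, which is immediate.

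For $p>k$, the induction has delivered $\langle\xi,\partial_{x}\rangle^{k}J^{k}F = (-1)^{k}k!\,J^{0}F$. Applying $\langle\xi,\partial_{x}\rangle$ once more turns the integrand into a pure $t$-derivative of a compactly supported function, so the integral vanishes by the fundamental theorem of calculus. Any additional applications of $\langle\xi,\partial_{x}\rangle$ preserve this vanishing, giving the second branch of the formula. Finally, because $\langle\xi,\partial_{x}\rangle$ differentiates only in $x$, it commutes with restriction in the $\xi$ variable, so the identity descends verbatim to $I^{k}F=J^{k}F|_{\mathbb{R}^{n}\times\mathbb{S}^{n-1}}$. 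There is no real obstacle here; the only care needed is the combinatorial step in the inductive formula and the standard reduction from smooth tensor fields to distributions via the adjoint, neither of which is deep.
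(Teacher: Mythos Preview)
Your proof is correct. The paper does not actually prove this lemma---it is cited verbatim from \cite{polyharmonic_mrt_application} without proof---so there is nothing in the present paper to compare against; your chain-rule plus integration-by-parts argument followed by induction on $p$ is the natural and standard route, and the combinatorial identity and the restriction-to-the-sphere step are handled cleanly.
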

This lemma entails that highest order MRT $(J^mF)$ uniquely determines all the lower order MRT $(J^kF)\, 0\le k\le m$. Analogous result holds if one replace $J^kF$ by $I^kF$. Consequently, one can obtain next results.
\begin{lemma}\label{unique_based_on_highest_mrt}
    	Suppose $ F \in \mathbf{S}^m(\mathcal{E}')$ and $m\ge 0$. Then 
	$J^{m}F=0 \implies  F=0.$
\end{lemma}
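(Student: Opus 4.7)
The plan is to deduce this directly from the two previously stated lemmas: the injectivity statement of Lemma \ref{uniquenes_result_mrt} and the differential identity of Lemma \ref{translation_lemma}. The core idea is that the highest order momentum ray transform $J^m F$ already encodes all the lower order transforms $J^k F$ for $0 \le k \le m$, via differentiation in the $x$ variable along the line direction.

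Concretely, I would first apply Lemma \ref{translation_lemma} with $k = m$ and successive values $p = 1, 2, \ldots, m$. This yields
\[
\langle \xi, \tfrac{\partial}{\partial x}\rangle^{p} J^{m} F \;=\; (-1)^p \binom{m}{p} p!\, J^{m-p} F, \qquad 0 \le p \le m.
\]
Assuming $J^m F = 0$, the left-hand side vanishes identically on $\mathbb{R}^n \times (\mathbb{R}^n \setminus \{0\})$, and since the binomial coefficient $\binom{m}{p} p!$ is a nonzero constant, we conclude $J^{m-p} F = 0$ for each $0 \le p \le m$. Re-indexing, this gives $J^{k} F = 0$ for every integer $k$ with $0 \le k \le m$.

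At this point the hypothesis of Lemma \ref{uniquenes_result_mrt} is satisfied, and invoking it immediately yields $F = 0$, as required. There is no real obstacle here: the only subtlety is checking that Lemma \ref{translation_lemma} is applicable to distributional $F \in \mathbf{S}^m(\mathcal{E}')$, which is precisely how it is stated in the excerpt, so the argument is a one-paragraph deduction.
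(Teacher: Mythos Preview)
Your proposal is correct and matches the paper's own argument: the paper states Lemma \ref{unique_based_on_highest_mrt} as an immediate consequence of Lemma \ref{translation_lemma} (which shows $J^mF$ determines all $J^kF$ for $0\le k\le m$) together with Lemma \ref{uniquenes_result_mrt}, without giving a separate proof.
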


\begin{lemma}\cite[Theorem 4.18]{polyharmonic_mrt_application}\label{mrt_uniqueness_2}
    Let $m\ge 2$ and $F =\sum\limits_{l=0}^{m} f^{(l)} \in \mathbf{S}^m(\mathcal{E}')$. 
Then  
\[I^{m}F= J^{m}F|_{\Rn\times \Sb^{n-1}}=0\]
if and only if 
\begin{align*}
      f^{\lr{2\lb{\frac{m}{2}}}}
=- i_{\d}\left( \sum\limits_{l=1}^{\lb{\frac{m}{2}}} i^{\lb{\frac{m}{2}}-l}_{\d} f^{(2l-2)}\right) \quad \mbox{and} \quad
    f^{\lr{2\lb{\frac{m-1}{2}}+1}}&= -i_{\d}\left( \sum\limits_{l=1}^{\lb{\frac{m-1}{2}}} i^{\lb{\frac{m}{2}}-l}_{\d} f^{(2l-1)}\right).
\end{align*}
Moreover, $I^{m}F=0$ if and only if $F=0$ for $m=0,1$.
\end{lemma}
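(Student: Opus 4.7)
The plan is to separate the components of $F = \sum_{l=0}^{m} f^{(l)}$ according to the parity of $l$, collapse each parity class into a single symmetric tensor of pure order via the operator $i_\delta$, and then reduce the problem to the injectivity statement in Lemma \ref{uniquenes_result_mrt}. The key algebraic identity driving the reduction is
\[
J^m(i_\delta g)(x,\xi) = |\xi|^2\, J^m g(x,\xi) \qquad \text{on } \Rn \times (\Rn \setminus \{0\}),
\]
which holds because the symmetrization inside $i_\delta$ contracts two extra indices against $\xi$; restricted to $|\xi|=1$ this reads $I^m(i_\delta g) = I^m g$ on $\Rn \times \Sn$. Applied iteratively, it lets me rewrite $I^m F = I^m F_{\mathrm{even}} + I^m F_{\mathrm{odd}}$ on the sphere bundle, where
\[
F_{\mathrm{even}} \coloneqq \sum_{l=0}^{[m/2]} i_\delta^{[m/2]-l}\, f^{(2l)}, \qquad F_{\mathrm{odd}} \coloneqq \sum_{l=0}^{[(m-1)/2]} i_\delta^{[(m-1)/2]-l}\, f^{(2l+1)}
\]
are single symmetric tensor fields of pure orders $2[m/2]$ and $2[(m-1)/2]+1$ respectively, both compactly supported.

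Next I will decouple these two pieces using the antipodal symmetry in $\xi$. A change of variable $t \mapsto -t$ in the definition of $I^m$ gives $I^m g(x,-\xi) = (-1)^{m+p+1}\, I^m g(x,\xi)$ for any symmetric tensor $g$ of order $p$; since the orders of $F_{\mathrm{even}}$ and $F_{\mathrm{odd}}$ have opposite parity, the corresponding contributions transform with opposite signs under $\xi \mapsto -\xi$. Evaluating $I^m F = 0$ at $\xi$ and at $-\xi$ and taking the sum and difference therefore yields $I^m F_{\mathrm{even}} = 0$ and $I^m F_{\mathrm{odd}} = 0$ separately on $\Rn \times \Sn$. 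Because each of $J^m F_{\mathrm{even}}$ and $J^m F_{\mathrm{odd}}$ is the MRT of a symmetric tensor of a single pure order, it is homogeneous in $\xi$ of one fixed degree, so vanishing on the sphere lifts to vanishing on all of $\Rn \times (\Rn \setminus \{0\})$.

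Finally, the translation identities in Lemma \ref{translation_lemma} applied to $J^m F_{\mathrm{even}} = 0$ propagate to $J^k F_{\mathrm{even}} = 0$ for every $0 \le k \le 2[m/2]$, and analogously for $F_{\mathrm{odd}}$; Lemma \ref{uniquenes_result_mrt} then forces $F_{\mathrm{even}} = 0$ and $F_{\mathrm{odd}} = 0$. Solving these two equalities for the top-order components $f^{(2[m/2])}$ and $f^{(2[(m-1)/2]+1)}$ produces the two displayed identities, while the converse direction is immediate since $F_{\mathrm{even}} = F_{\mathrm{odd}} = 0$ trivially gives $I^m F = 0$. For $m = 0, 1$ there is no room to apply $i_\delta$, and the antipodal-parity argument reduces the claim directly to Lemma \ref{unique_based_on_highest_mrt} applied componentwise, yielding $F = 0$. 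I anticipate that the only delicate point is the bookkeeping of the exponents of $i_\delta$ so that the rearrangement of $F_{\mathrm{even}} = 0$ and $F_{\mathrm{odd}} = 0$ matches the precise algebraic form stated.
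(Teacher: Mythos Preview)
Your argument is correct. The paper does not give its own proof of this lemma---it is quoted from \cite{polyharmonic_mrt_application} without proof---so there is nothing to compare your approach against directly. One small slip: the antipodal sign is $(-1)^{m+p}$, not $(-1)^{m+p+1}$ (the substitution $t\mapsto -t$ contributes $(-1)^m$ from $t^m$, while the Jacobian and limit reversal cancel), but since the argument only needs that tensors of even and odd order transform with opposite signs under $\xi\mapsto -\xi$, this is harmless. The rest of the chain---collapsing each parity class to a single pure-order tensor via Lemma~\ref{increasing_order}, separating by antipodal parity, lifting from $\Rn\times\Sn$ to $\Rn\times(\Rn\setminus\{0\})$ by homogeneity in $\xi$, descending to lower moments via Lemma~\ref{translation_lemma}, and concluding with Lemma~\ref{uniquenes_result_mrt} (noting that the relevant pure orders $2[m/2]$ and $2[(m-1)/2]+1$ are at most $m$)---is sound, as is the converse.
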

\begin{lemma}\label{increasing_order}
The following equality 
\begin{align*}
       I^k (i^p_{\d}f^{(l)})(x,\xi) = I^k f^{(l)}(x,\xi)
   \end{align*}
   holds for any $f^{(l)} \in S^l $ and for all integers $p$ with $ p\ge 1$. For $p=0$ this holds trivially because $i^0_{\delta}$ is the identity operator.
\end{lemma}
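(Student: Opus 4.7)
The plan is to reduce the statement to the simple observation that the contraction $(i_\delta f)_{i_1 \cdots i_{l+2}} \xi^{i_1} \cdots \xi^{i_{l+2}}$ along a repeated vector $\xi$ picks up a factor $|\xi|^2$, and then restrict to $|\xi|=1$.

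First I would exploit the fact that contracting a symmetrized tensor with a totally symmetric product $\xi^{i_1} \cdots \xi^{i_m}$ is insensitive to the symmetrization. More precisely, since $\sigma(T)_{i_1 \cdots i_m} \xi^{i_1} \cdots \xi^{i_m} = T_{i_1 \cdots i_m} \xi^{i_1} \cdots \xi^{i_m}$ for any $T$ (each permutation of $\xi$ indices gives the same product), we obtain
\begin{equation*}
(i_{\delta} f^{(l)})_{i_1 \cdots i_{l+2}} \xi^{i_1} \cdots \xi^{i_{l+2}} = f^{(l)}_{i_1 \cdots i_l} \, \delta_{i_{l+1} i_{l+2}} \xi^{i_1} \cdots \xi^{i_{l+2}} = |\xi|^2 \, f^{(l)}_{i_1 \cdots i_l}(x+t\xi) \, \xi^{i_1} \cdots \xi^{i_l}.
\end{equation*}

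Substituting this identity into the definition of $J^k$ in \eqref{Eq4.1} and pulling the constant $|\xi|^2$ outside the $t$-integral yields $J^k(i_\delta f^{(l)})(x,\xi) = |\xi|^2 \, J^k f^{(l)}(x,\xi)$ for all $(x,\xi) \in \R^n \times (\R^n \setminus\{0\})$. Restricting to the unit sphere bundle $\R^n \times \Sb^{n-1}$ and using $|\xi|=1$ gives $I^k(i_\delta f^{(l)}) = I^k f^{(l)}$, which is the case $p=1$.

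For the general $p \geq 1$, I would conclude by straightforward induction: assuming $I^k(i_\delta^{p-1} f^{(l)}) = I^k f^{(l)}$, the $p=1$ case applied to the tensor field $i_\delta^{p-1} f^{(l)} \in S^{l+2p-2}$ gives $I^k(i_\delta^p f^{(l)}) = I^k(i_\delta^{p-1} f^{(l)}) = I^k f^{(l)}$. There is no real obstacle in this argument; the only point worth emphasizing is the symmetrization-invariance of the $\xi$-contraction, which is why one gains a clean factor of $|\xi|^2$ at each application of $i_\delta$ and hence a trivial factor when restricted to the unit sphere.
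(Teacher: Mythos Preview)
Your proof is correct and is exactly the natural argument: contracting $i_{\delta}f$ against the symmetric product $\xi^{\otimes(l+2)}$ produces a factor $|\xi|^2$, so $J^k(i_{\delta}f)=|\xi|^2 J^k f$, and restricting to $|\xi|=1$ gives the identity; induction handles $p\geq 1$. The paper in fact states this lemma without proof, so there is nothing to compare against; your write-up supplies precisely the expected one-line justification. (One tiny cosmetic point: in your displayed equation the argument $(x+t\xi)$ appears on the right-hand side but not on the left; either include it throughout or drop it and state the contraction identity pointwise.)
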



\begin{remark}
   We know that any symmetric $m$-tensor field in $\Rn$ has $\binom{m+n-1}{m}$ distinct components. Suppose $ F$ is the same as in \eqref{definition_of_F_m}. Then $F$ has a total of $ \sum\limits_{l=0}^m \binom{l+n-1}{l}= \binom{m+n}{m}$ distinct components. As a consequence, recovering $F$ in $\mathbb{R}^n$ is  equivalent to  recovering  a symmetric   $m$-tensor field in  $\mathbb{R}^{n+1}$. However to recover a symmetric $m$-tensor field in $\mathbb{R}^{n+1}$, MRT information is  required on the tangent bundle of unit sphere which is a $2(n+1)-2=2n$ dimensional data set. 
   As a result, the kernel of $J^mF$ is trivial (see \Cref{unique_based_on_highest_mrt}), whereas the kernel of $I^mF$ is nontrivial when  $m\ge 2$ (see \Cref{mrt_uniqueness_2}) because $I^mF$ is specified  on the unit sphere bundle.  
   
 
\end{remark}

Next we prove the following result, which was used to segregate MRT of different order tensor fields in the previous sections.
\begin{lemma}\label{lem:sum_of_mrt}
Let $F_m\in \mathbf{S}^m(\mathcal{E}')$ and  $m\ge 0$. 
\begin{itemize}
    \item [1.] Suppose \begin{align*}
    c_mI^{k+m}F_m+\cdots+ c_1 I^{k+1}F_1+ c_0 I^{k}F_0 =0 
\end{align*}
holds for certain non-zero  constants $c_j$ where $ 0\le j\le m$ and for all $ 0\le k\le m$. Then 
\begin{align}\label{each_mrt_relation}
    I^pF_p=0 \quad \mbox{for} \quad p=0,1,\cdots,m.
\end{align}
\item [2.] Additionally, if we assume
 \begin{align*}
      d_m I^{k+m-1}F_m+\cdots+ d_1 I^{k}F_1=0 
 \end{align*}%
  holds for certain non-zero constants $d_j$ where $ 0\le j\le m-1$ and for all $ 0\le k\le m-1$. Then
 \begin{align}\label{eq_7.10}
      I^pF_{p+1}=0 \quad \mbox{for} \quad p=0,1,\cdots,m-1.
 \end{align}
\end{itemize}
\end{lemma}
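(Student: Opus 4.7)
The plan for Part 1 is to iteratively eliminate the tensor fields $F_0, F_1, \dots$ from the system by applying $\langle\xi, \partial_x\rangle$ and invoking \Cref{translation_lemma}. Writing $(E_k)$ for the hypothesis $\sum_{j=0}^m c_j I^{k+j}F_j = 0$, the lemma gives $\langle\xi,\partial_x\rangle(E_k) = -\sum_{j=0}^m c_j(k+j)\, I^{k+j-1}F_j = 0$. For $k\ge 1$, I would add $k\cdot (E_{k-1})$ to this differentiated identity to cancel the $j=0$ summand; for $k=0$ that summand vanishes on its own since $\langle\xi,\partial_x\rangle I^0F_0 = 0$. The outcome is the reduced family
\[ (E^1_\ell):\quad \sum_{j=1}^m j\, c_j\, I^{\ell+j}F_j = 0, \qquad -1 \le \ell \le m-1. \]

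Iterating this combine-and-eliminate step $r$ times, I expect inductively the family
\[ (E^r_\ell):\quad \sum_{j=r}^m \frac{j!}{(j-r)!}\, c_j\, I^{\ell+j}F_j = 0, \qquad -r \le \ell \le m-r. \]
Setting $\ell=0$ yields $\sum_{j=r}^m \frac{j!}{(j-r)!}c_j\, I^j F_j = 0$. Downward induction on $p$ from $p=m$ to $p=0$ then finishes Part 1: the base case $p=m$ reduces to $c_m\,m!\,I^mF_m=0$, and for $p<m$ the identity $(E^p_0)$ collapses to $c_p\,p!\,I^p F_p = 0$ once the vanishing of $I^jF_j$ for $j>p$ (known from earlier steps of the induction) is substituted, giving \eqref{each_mrt_relation}.

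Part 2 reduces to Part 1 by relabeling. Setting $F'_j := F_{j+1}$ and $c'_j := d_{j+1}$ for $0\le j\le m-1$ turns the Part 2 hypothesis into $\sum_{j=0}^{m-1} c'_j\, I^{k+j}F'_j = 0$ for $0\le k\le m-1$, which is the Part 1 hypothesis for a system of size $m-1$. Crucially, the argument above relies only on \Cref{translation_lemma}, which applies to any element of $\mathbf{S}^N(\mathcal{E}')$; nothing in that argument requires the tensor degree of $F_j$ to match its index. Thus the argument goes through with $F'_j\in\mathbf{S}^{j+1}(\mathcal{E}')$ in place of $\mathbf{S}^j(\mathcal{E}')$, yielding $I^jF'_j = 0$ for $0\le j\le m-1$, which is \eqref{eq_7.10}.

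The main technical obstacle is the bookkeeping during the iteration: confirming at each round $r$ that the reduced system $(E^r_\ell)$ is valid on the full interval $-r\le\ell\le m-r$ and carries the precise weight $\frac{j!}{(j-r)!}c_j$. The extreme case $\ell = -r$ has no predecessor $(E^{r-1}_{-r-1})$ to subtract from, and must instead be obtained by differentiating $(E^{r-1}_{-(r-1)})$ alone; the would-be $j = r-1$ summand in that differentiation must then be checked to vanish automatically, which it does because the weight $(\ell+j)=0$ there (and $I^{-1}(\cdot)=0$ by convention).
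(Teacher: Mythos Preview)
Your proof is correct and rests on the same key tool (\Cref{translation_lemma}) as the paper, but the execution is genuinely different. The paper applies $\langle\xi,\partial_x\rangle^k$ to the $k$-th hypothesis equation for each $0\le k\le m$, obtaining $\sum_{j=0}^m c_j\binom{k+j}{k}\,I^jF_j=0$; these $m+1$ relations are then cast as a matrix system $AX=0$ with $X=(I^mF_m,\dots,I^0F_0)^t$, and the conclusion follows from a separate lemma computing $\det A=(-1)^{m(m+1)/2}c_0\cdots c_m\neq 0$ via column operations on the Pascal-type entries. Your iterative elimination reaches the same endpoint without a determinant calculation: you effectively perform Gaussian elimination on the same system one differentiation at a time, tracking the weights $\frac{j!}{(j-r)!}c_j$ explicitly and reading off $I^pF_p=0$ by downward induction. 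Your route is a bit more hands-on but entirely self-contained; the paper's is cleaner to state but offloads the real work to the determinant lemma. Your reduction of Part~2 to Part~1 by relabeling $F'_j=F_{j+1}$, $c'_j=d_{j+1}$ is also more explicit than the paper's, which simply remarks that the proof of \eqref{eq_7.10} ``follows similarly.''
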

\begin{proof}
We only give the  proof of \eqref{each_mrt_relation} and that of \eqref{eq_7.10} follows similarly. 
We have that 
\begin{align*}
     c_mI^{k+m}F_m+\cdots+ c_1 I^{k+1}F_1+ c_0 I^{k}F_0 =0.
\end{align*}
    Applying $ \langle \xi,\PD_x\rangle^k\,  $ to the above equation and then using Lemma \ref{translation_lemma} we obtain
    \begin{align*}
    &    c_m \binom{k+m}{k}\,I^{m}F_m+c_{m-1}\, \binom{k+m-1}{k}\,I^{m-1}F_{m-1}+\cdots+c_1\,\binom{k+1}{k}\, I^{1}F_1+ c_0\,\binom{k}{k}\, I^{0}F_0=0,
    \end{align*}
    for all $k$ with $0\le k\le m$.
    The above relation can be written as the matrix equation $AX = 0$ where 
    \begin{equation*}
        \begin{aligned}
          X&= (I^mF_m,\cdots,I^0F_0)^t,\\
            A&=\begin{pmatrix}
            c_m & c_{m-1}& \cdots& c_1& c_0\\
           c_m \binom{m+1}{1} & c_{m-1}\binom{m}{1}& \cdots& c_1\binom{2}{1}&c_0  \smallskip\\
          c_m \binom{m+2}{2} & c_{m-1}\binom{m+1}{2}& \cdots&c_1\binom{3}{2}& c_0\smallskip\\
           \vdots&\vdots&\cdots&\vdots&\vdots\\
           c_m \binom{2m}{m} & c_{m-1}\binom{2m-1}{m}& \cdots&c_1\binom{m+1}{m}& c_0
            \end{pmatrix}.\end{aligned}
    \end{equation*}
    We now complete the proof by showing  $\det A\neq 0$. This follows from Lemma \ref{prop:determinant}. This implies $X=0$, i.e. 
    \[I^p F_p=0 \quad \mbox{for} \quad p=0,1,\cdots,m.\]
\end{proof}

\begin{lemma}\label{prop:determinant}
    $\det A = (-1)^{\frac{m(m+1)}{2}}\, c_0\cdots c_m  $.
\end{lemma}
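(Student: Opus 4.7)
The plan is to compute $\det A$ by first factoring the constants $c_{m-j}$ out of the columns and then reducing to a classical identity for the symmetric Pascal matrix.

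First, I would observe that the entry of $A$ in row $i$ and column $j$ (with $i,j\in\{0,1,\ldots,m\}$) equals $c_{m-j}\binom{m-j+i}{i}$. Pulling the factor $c_{m-j}$ out of each column gives
\[
\det A = \left(\prod_{j=0}^{m} c_{m-j}\right) \det B = c_0 c_1 \cdots c_m \,\det B, \qquad B_{ij}=\binom{m-j+i}{i}.
\]
Next I would reverse the order of the columns of $B$ (replace column $j$ by column $m-j$). This operation is the permutation $\sigma(j)=m-j$, whose sign is $(-1)^{\binom{m+1}{2}}=(-1)^{m(m+1)/2}$, so
\[
\det B = (-1)^{m(m+1)/2}\det \widetilde{B}, \qquad \widetilde{B}_{ij}=\binom{i+j}{i}.
\]
Thus the task reduces to showing that the symmetric Pascal matrix $\widetilde{B}$ has determinant one.

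For that step I would use the Chu--Vandermonde identity $\binom{i+j}{i}=\sum_{k}\binom{i}{k}\binom{j}{k}$, which gives the factorization $\widetilde{B}=LU$ with
\[
L_{ik}=\binom{i}{k}, \qquad U_{kj}=\binom{j}{k}.
\]
Because $\binom{i}{k}=0$ for $k>i$ and $\binom{j}{k}=0$ for $k>j$, the matrix $L$ is lower triangular and $U$ is upper triangular, and both have all $1$'s on the diagonal. Hence $\det \widetilde{B}=\det L \cdot \det U = 1$, and combining the three displayed equations yields
\[
\det A = (-1)^{m(m+1)/2} c_0 c_1 \cdots c_m,
\]
as required.

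This proof is essentially routine; the only step with any content is recognising the Pascal-matrix structure after the column reversal and invoking Chu--Vandermonde to get the $LU$-factorization. There is no real obstacle, but one has to keep track carefully of the sign produced by the column reversal, which is precisely what produces the announced factor $(-1)^{m(m+1)/2}$.
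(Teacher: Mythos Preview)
Your proof is correct. The approach, however, differs from the paper's. After the common first step of factoring out the constants $c_{m-j}$, the paper does not reverse columns; instead it performs the column operations $C_{m-k-1}\mapsto C_{m-k-1}-C_{m-k}$, which (via Pascal's rule $\binom{n}{k}-\binom{n-1}{k}=\binom{n-1}{k-1}$) creates a first row $(1,0,\ldots,0)$ and a lower-right block equal to $-A_m$, yielding the recursion $\det A_{m+1}=(-1)^m\det A_m$ and hence $\det A_{m+1}=(-1)^{m(m+1)/2}$. Your route---column reversal followed by the Chu--Vandermonde factorisation $\widetilde{B}=LL^{T}$ of the symmetric Pascal matrix---is more conceptual and produces the sign in one stroke as the signature of the reversal permutation, at the cost of invoking a (standard) combinatorial identity. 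Both arguments are short and elementary; yours has the advantage of identifying a classical object behind the computation, while the paper's column-operation argument is entirely self-contained.
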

\begin{proof}
We have 
    \begin{align*}
        \det A = c_0\cdots c_m  \det A_{m+1}
    \end{align*}
    where \begin{align*}
       A_{m+1}&=\begin{pmatrix}
            1 & 1& \cdots&1& 1\\
           \binom{m+1}{1} & \binom{m}{1}& \cdots& \binom{2}{1}&1 \smallskip \\
          \binom{m+2}{2} & \binom{m+1}{2}& \cdots& \binom{3}{2}\, &1\\
           \vdots&\vdots&\cdots&\vdots\\
           \binom{2m}{m} & \binom{2m-1}{m}& \cdots&  \binom{m+1}{m}&1 
            \end{pmatrix}.
    \end{align*}
    We now subtract the $ (m-k)$-th column from the $(m-k-1)$-th column for $1\le k \le m-2 $ to obtain
    \begin{align*}
        A_{m+1}\longrightarrow \begin{pmatrix}
            1 & 0& \cdots& 0\\
           \binom{m+1}{1} & -1& \cdots& -1 \smallskip \\
          \binom{m+2}{2} & -\binom{m+1}{1}& \cdots& -\binom{2}{1}\, \\
           \vdots&\vdots&\cdots&\vdots\\
           \binom{2m}{m} & -\binom{2m-1}{m-1}& \cdots&  -\binom{m}{m-1} 
            \end{pmatrix}.
    \end{align*}
    This implies 
    $
        \det A_{m+1}= (-1)^{m} \det A_{m},
  $
    where $A_{m}$ is obtained from $A_{m+1}$ by considering first $m$ rows and $m$ columns respectively. Proceeding in this way after finally many steps we obtain
    $
        \det A_{m+1} = (-1)^{m+(m-1)+\cdots+2+1}. 
  $
    This implies 
   $
        \det A= (-1)^{\frac{m(m+1)}{2}}\, c_0\cdots c_m.
    $
    \end{proof}

\section{A new decomposition of symmetric tensor fields}\label{sec:decomposition}

We now prove a suitable trace free Helmholtz type decomposition of symmetric tensor fields, which we used in the previous section to deal with partial data MRT.
\begin{lemma}\label{trace_free_helmholtz decomposition}
    Let $f$ be a   smooth symmetric $m$ tensor field in $\overline{\Omega}$. Then the following decomposition holds:
    \begin{align*}
        f= \tilde{f}+i_{\delta} v+\D^{m}\phi,\quad \mbox{with}\quad j_{\delta}\tilde{f}=\delta^m\tilde{f}=0 \end{align*}
    and $\PD_{\nu}^{l}\phi|_{\PD \Omega}=0, l=0,1,\cdots,m-1$, where $\tilde{f} \in S^m$, $v \in S^{m-2}$ and $\phi \in S^0$ are smooth symmetric tensor fields in $\overline{\Omega}$.
    \end{lemma}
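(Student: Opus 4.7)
The plan is to realise the decomposition as the Euler--Lagrange equations of an $L^2$ minimisation problem involving a natural operator that packages $\D^m$ and $i_\d$ together. I introduce the bounded linear map
\[
B:\; H^m_0(\Omega) \times L^2(\overline{\Omega}; S^{m-2}) \longrightarrow L^2(\overline{\Omega}; S^m), \qquad B(\phi, v) := \D^m\phi + i_\d v,
\]
where $H^m_0(\Omega)$ denotes the space of $H^m$ functions on $\Omega$ satisfying $\PD_{\nu}^l\phi|_{\PD \Omega}=0$ for $l=0,\ldots,m-1$. Integration by parts (using the clamped boundary conditions on $\phi$) together with the algebraic duality of $i_\d$ and $j_\d$ yields the formal adjoint $B^* f = ((-1)^m \delta^m f,\; j_\d f)$, so that $\ker B^* = \{f \in L^2(\overline{\Omega}; S^m) : \delta^m f = 0 \text{ and } j_\d f = 0\}$ in the distributional sense. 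Given the smooth $f$, I would minimise the convex quadratic functional $\Fc(\phi, v) := \|f - B(\phi, v)\|^2_{L^2(S^m)}$ over the domain of $B$; provided the range of $B$ is closed, the minimum is attained at some pair $(\phi, v)$ satisfying $B^*(f - B(\phi, v)) = 0$ weakly, and writing $\tilde f := f - \D^m\phi - i_\d v$ this is precisely the system $\delta^m \tilde f = 0$ and $j_\d \tilde f = 0$ required in the statement, with $\phi$ automatically inheriting the clamped boundary conditions.

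To upgrade regularity (so that $\phi$, $v$, $\tilde f$ are all smooth up to the boundary), I apply $\delta^m$ and $j_\d$ to the identity $f - \D^m\phi - i_\d v = \tilde f$. Using the pointwise computations $\delta^m \D^m = \Delta^m$, $\delta^m i_\d = \Delta\, \delta^{m-2}$, and $j_\d \D^m = \D^{m-2}\Delta$, this produces the coupled boundary value problem
\begin{align*}
\Delta^m \phi + \Delta\, \delta^{m-2} v &= \delta^m f, \\
\D^{m-2}(\Delta\phi) + j_\d i_\d v &= j_\d f,
\end{align*}
with the clamped conditions $\PD_\nu^l\phi|_{\PD\Omega}=0$ for $0\le l\le m-1$. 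The algebraic operator $j_\d i_\d$ is an isomorphism of $S^{m-2}$ (acting as a positive scalar on each trace-free summand of the Dairbekov--Sharafutdinov decomposition), so the second equation solves for $v$ algebraically in terms of $\Delta\phi$ and $j_\d f$. Substituting into the first yields a scalar elliptic equation of order $2m$ for $\phi$ whose principal symbol is a nonzero constant multiple of $|\xi|^{2m}$, and the standard elliptic theory for this clamped polyharmonic-type boundary value problem provides $\phi \in C^\infty(\overline{\Omega})$, after which $v$ and $\tilde f$ are also smooth.

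The main obstacle is the closed range / coercivity estimate in the variational step, because the cross pairing $\langle \D^m \phi, i_\d v\rangle_{L^2} = (-1)^m \langle \phi, \Delta\,\delta^{m-2} v \rangle_{L^2}$ is not manifestly subordinate to $\|\D^m \phi\|^2_{L^2}$ and $\|i_\d v\|^2_{L^2}$. The cleanest route that bypasses this difficulty is to solve the reduced scalar elliptic problem from the previous paragraph directly, define $v$ from the algebraic formula, and then verify a posteriori that the resulting $\tilde f = f - \D^m\phi - i_\d v$ satisfies both $\delta^m \tilde f = 0$ and $j_\d \tilde f = 0$, thereby obtaining the decomposition without invoking the variational framework at all.
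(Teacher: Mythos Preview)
Your reduction is essentially the paper's: eliminating $v$ via $(j_\delta i_\delta)^{-1}$ turns your coupled system into the single scalar equation $\delta^m p\,\D^m\phi=\delta^m p f$, where $p=\mathrm{Id}-i_\delta(j_\delta i_\delta)^{-1}j_\delta$ is the trace-free projection (your identities $\delta^m\D^m=\Delta^m$, $\delta^m i_\delta=\Delta\,\delta^{m-2}$, $j_\delta\D^m=\D^{m-2}\Delta$ are correct and make this explicit). The variational framing is a nice heuristic, but as you say, the closed-range obstacle forces you back to solving this clamped BVP directly, which is exactly what the paper does.

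The genuine gap is in the phrase ``standard elliptic theory provides $\phi$''. Two things are hidden there. First, ellipticity: by $O(n)$-equivariance the principal symbol is indeed $c_{m,n}|\xi|^{2m}$ for some constant, but $c_{m,n}\neq 0$ is not automatic (for $m=2$ one computes $c_{2,n}=(n-1)/n$); the paper proves this in a separate lemma by showing $p\,i_{\xi^{\otimes m}}\phi=0\Rightarrow\phi=0$ via induction on $m$ and the commutation formulas of Dairbekov--Sharafutdinov. Second, and more seriously, solvability: ellipticity with clamped conditions gives Fredholm index zero, not surjectivity. To solve the equation for every $f$ you must show the kernel is trivial. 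If $\delta^m p\,\D^m\phi=0$ with $\partial_\nu^l\phi|_{\partial\Omega}=0$, pairing gives $p\,\D^m\phi=0$; the paper then uses the Dairbekov--Sharafutdinov result that a trace-free conformal Killing tensor vanishing on $\partial\Omega$ is identically zero to peel off one order at a time ($p\,\D^m\phi=0\Rightarrow p\,\D^{m-1}\phi=0\Rightarrow\cdots\Rightarrow\D\phi=0$), forcing $\phi=0$. Without this kernel argument neither your direct route nor the variational one (closed range) closes.
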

    This is a generalization of the decomposition shown in \cite{Dairbekov_Sharafutdinov}. In \cite{Rohit_Suman}, they proved  certain  decomposition  of symmetric tensors.  
\begin{remark}
    For $m=1$, this is the well known Helmholtz (or solenoidal) decomposition. 
\end{remark}
\begin{proof}
    We closely follow the arguments used in \cite{Dairbekov_Sharafutdinov}. We first assume that $f$ can be written in the given form as
    \begin{align}\label{assumtion_on_decom}
       f= \tilde{f}+i_{\delta} v+\D^{m}\phi,
    \end{align}
    and we derive an equation for $\phi$. Applying $j_{\delta}$ to \eqref{assumtion_on_decom} and using $j_{\delta} \tilde{f} = 0$ we get
    \[
         j_{\delta} f = \jd \id v + \jd \D^m \phi.
    \]
    Since $j_{\delta}i_{\delta}$ is invertible by \cite[Lemma 2.3]{Dairbekov_Sharafutdinov}, we obtain 
    \begin{align}\label{expre_of_v}
         v = (\jd \id )^{-1} ( \jd f-\jd\D^m \phi).
    \end{align}
    This together with \eqref{assumtion_on_decom} implies
    \begin{align}\label{eq_6.10}
        f&= \tilde{f} + \id ((\jd \id )^{-1} ( \jd f-\jd\D^m \phi)) +\D^m\phi.
    \end{align}
    Denote $ q = \id (\jd\id)^{-1} \jd $ and $p= (Id-q)$. From \cite[equation (2.15)]{Dairbekov_Sharafutdinov} we have that $p$ is the projection to the trace free component of a symmetric tensor and that $p(iv) = 0$ for any $v$. Thus from \eqref{eq_6.10} we obtain
    \begin{align*}
        pf&= \tilde{f}+p \D^m \phi.
    \end{align*}
    Applying $\delta^m$ to the above identity and using $\delta^m \tilde{f} = 0$ we obtain
    \begin{align*}
        \delta^m p\D^m \phi = \delta^m pf.
    \end{align*}
Hence, $\phi$ solves the following boundary value problem.              
\begin{equation}\label{bdry_value_problem}
    \begin{aligned}
  (-1)^m \delta^m p\D^m \phi &= (-1)^m\delta^m pf \quad \mbox{in } \quad \Omega\\
   \PD^l_{\nu}\phi&=0 \quad \hspace{1.8cm}\mbox{on} \quad \PD\Omega\quad \mbox{for} \quad l=0,1,\cdots,m-1.
\end{aligned}
\end{equation}

We next show that  $ \delta^m p\D^m$ acting on scalar functions in $\Omega$ is a strongly elliptic operator in the sense of \cite[Formula 11.79, Section 5.11]{Taylor_book}. Then by \cite[Exercise 3, Section 5.11]{Taylor_book}  the map
\begin{equation}\label{mapping}
    \begin{aligned}
        H^{2m}(\Omega)\cap H_0^{m}(\Omega)&\longrightarrow L^2(\Omega) \\
        \phi &\longrightarrow \delta^m p\D^m \phi
    \end{aligned}
\end{equation}
will be Fredholm operator of index zero. To prove the ellipticity of $\delta^m p\D^m $ we follow \cite[Section 5]{Dairbekov_Sharafutdinov} and introduce the following.
For $x \in \Rb^n$,  we define the symmetric multiplication operators $i_x :S^m  \rightarrow S^{m+1}$ by 
$$(i_x f)_{i_1i_2\dots i_{m+1}} =\sigma(i_1, \dots, i_m,  i_{m+1})(x_{i_{m+1}}f_{i_{1}i_{2}\dots i_{m}}).$$
We also define the dual of the operator $i_x$, the contraction operator $j_x :S^m  \rightarrow S^{m-1}$ by 
$$(j_x f)_{i_1i_2\dots i_{m-1}} =f_{i_1i_2\dots i_m}x^{i_m}.$$ Similarly, $i_{x^{\otimes k}}$ and $j_{x^{\otimes k}}$ are defined by taking the composition of $ i_{x}$ and $j_{x}$ with itself $k$ times, respectively. 
The principal symbol of $\delta^m p\D^m$ at $ (x,\xi)\in \Omega \times (\mathbb{R}^n\setminus\{0\})$ is given by $ (-1)^m\,j_{\xi^{\otimes m}} p i_{\xi^{\otimes m}}$, where $(\I)^m\,j_{\xi^{\otimes m}}$ and $ (\I)^m\,i_{\xi^{\otimes m}}$ are the principal symbols of $ \delta^m$ and $\D^m$ respectively. Thus the principal symbol of $ (-1)^m \delta^m p\D^m $ is $j_{\xi^{\otimes m}} p i_{\xi^{\otimes m}} $, which is real valued and non-negative since for any $\phi$ 
\begin{align*}
    \langle j_{\xi^{\otimes m}} p i_{\xi^{\otimes m}} \phi, \phi \rangle = \langle  p i_{\xi^{\otimes m}} \phi,i_{\xi^{\otimes m}} \phi \rangle = \langle  p i_{\xi^{\otimes m}} \phi,pi_{\xi^{\otimes m}} \phi \rangle.
\end{align*}
Also by Lemma \ref{lm:injectivity_symbol} we obtain $ j_{\xi^{\otimes m}} p i_{\xi^{\otimes m}} \neq 0 $ for $\xi \neq 0$. Thus we have shown the ellipticity of the boundary value problem \eqref{bdry_value_problem}. By elliptic regularity any element in the kernel of \eqref{bdry_value_problem} will be smooth. Now
$(-1)^m \delta^m p\D^m \phi =0$ in $\Omega$ and $ \PD^l_{\nu} \phi|_{\PD\Omega} =0$ for  $0\le l\le m-1$ gives
\begin{align*}
    \langle (-1)^m \delta^m p\D^m \phi,\phi \rangle =     \langle  p\D^m \phi,p \D^m \phi \rangle = 0 \quad \mbox{using integration by parts}.
\end{align*}
This implies 
$ pd^m \phi =0$. From  \cite[Lemma 3.4, Equation 3.8]{Dairbekov_Sharafutdinov} we have that
\begin{align*}
     \D p f = p\D f + \frac{m}{n+2m-2} i\delta p f \quad \mbox{for} \quad f\in S^m.
\end{align*}
Replacing $f$ by $\D^{m-1}\phi$ entails
\begin{align*}
     \D p \D^{m-1}\phi=  \frac{m-1}{n+2m-4} i\delta p \D^{m-1} \phi. 
\end{align*}
Denote $ u =p \D^{m-1}\phi$ and $v = \frac{m-1}{n+2m-4} \delta p \D^{m-1} \phi $. Note that, trace of $u=0$. This gives
\begin{align*}
    \D u= i v.
\end{align*}
Hence $ u$ is a { \it trace free conformal Killing tensor field}. Now using the boundary conditions $ \PD^l_{\nu}\phi|_{\PD\Omega} =0$ for  $0\le l\le m-1$ we conclude $ u|_{\partial \Omega}=0$. By \cite[Theorem 1.3]{Dairbekov_Sharafutdinov} we have $ u=0$. Repeating this process finitely many times we will obtain $ p\D\phi= \D\phi=0$. Since $\phi|_{\PD\Omega}=0$, this implies that $\phi=0$ in $\Omega$. Thus the boundary value problem has \eqref{bdry_value_problem} has zero kernel. Since it has Fredholm index zero, this says \eqref{bdry_value_problem}  also has zero co-kernel. This shows that the mapping \eqref{mapping} is an isomorphism.

Finally, given $f \in S^m$, the fact that \eqref{mapping} is an isomorphism together with ellipticity shows that there exists a unique $\phi \in C^{\infty}(\overline{\Omega})$ solving \eqref{bdry_value_problem}. We then define $v$ by \eqref{expre_of_v}, which implies that $i_{\delta} v = qf - q d^m \phi$. We also define $\tilde{f} = f - i_{\delta}v - d^m \phi$. It follows that 
\[
\tilde{f} = f - (qf - q d^m \phi) - d^m \phi = pf - p d^m \phi.
\]
Then $j_{\delta} \tilde{f} = 0$ and $\delta^m \tilde{f} = \delta^m pf - \delta^m p d^m \phi = 0$. This proves the required decomposition.
\end{proof}

\begin{lemma}\label{lm:injectivity_symbol}
    For any $\xi \neq 0$, if \begin{align*}
    p i_{\xi^{\otimes m}} \phi =0,
\end{align*}
then $\phi =0$.
\end{lemma}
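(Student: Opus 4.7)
The plan is to exploit the fact that $p = \mathrm{Id} - q$ is the orthogonal projection onto trace-free symmetric tensors (so $p = p^2 = p^*$), together with the classical correspondence between symmetric tensors and homogeneous polynomials, to reduce the claim to showing that $\xi^{\otimes m}$ has nonzero trace-free part. First, since $\phi$ is a scalar one has $i_{\xi^{\otimes m}}\phi = \phi\,\xi^{\otimes m}$, so the hypothesis reads $\phi\,p(\xi^{\otimes m}) = 0$. Hence it suffices to prove that
\begin{equation*}
p(\xi^{\otimes m}) \neq 0 \quad \text{whenever } \xi \neq 0.
\end{equation*}

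To establish this, I would identify a symmetric $m$-tensor $f$ with the homogeneous polynomial $P_f(x) = f_{i_1\cdots i_m}\, x^{i_1}\cdots x^{i_m}$. Under this identification the tensor $\xi^{\otimes m}$ corresponds to the polynomial $(\xi\cdot x)^m$, while for any symmetric $(m-2)$-tensor $r$ one computes $P_{i_\delta r}(x) = |x|^2 P_r(x)$, because the symmetrization collapses when contracted with $x^{\otimes m}$. Iterating, the range of $i_\delta$ corresponds exactly to polynomials divisible by $|x|^2$, and by the decomposition recalled in \cite[equation (2.15)]{Dairbekov_Sharafutdinov} this range coincides with the kernel of $p$. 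Therefore $p(\xi^{\otimes m}) = 0$ would force $|x|^2$ to divide $(\xi\cdot x)^m$ in the polynomial ring $\mathbb{C}[x_1,\dots,x_n]$.

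The final step is a short unique-factorization argument: $(\xi\cdot x)^m$ is a power of an irreducible linear form, so by unique factorization the only polynomials dividing it are (scalar multiples of) powers $(\xi\cdot x)^k$. However $|x|^2$ is not of this form for any $\xi$; for instance, its Hessian is $2\,\mathrm{Id}$, whereas any $c(\xi\cdot x)^2$ has Hessian of rank one (and for $m\geq 2$ the divisor would have to be a scalar multiple of $(\xi\cdot x)^2$). This contradiction shows $p(\xi^{\otimes m})\neq 0$, and hence $\phi = 0$. I expect the only mildly delicate point to be the reduction to the polynomial-divisibility statement; everything after that is essentially formal.
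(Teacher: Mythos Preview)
Your proof is correct and takes a genuinely different route from the paper's. The paper argues by induction on $m$: the base case $m=1$ is immediate since $p$ is the identity on $S^1$, while the inductive step feeds $p\,i_{\xi^{\otimes (m+1)}}\phi = 0$ into the identity of \cite[Lemma~5.3]{Dairbekov_Sharafutdinov} together with several commutation formulas from the same source to obtain an equality of the form $|\xi|^2|f|^2 + c_m\,|j_\xi f|^2 = 0$ with $f = p\,i_{\xi^{\otimes m}}\phi$ and $c_m \ge 0$, forcing $f = 0$. You instead reduce everything in one stroke to the polynomial fact that $|x|^2 \nmid (\xi\cdot x)^m$, via the standard identification of $S^m$ with homogeneous degree-$m$ polynomials under which $\ker p$ (the range of $i_\delta$) corresponds exactly to multiples of $|x|^2$. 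Your argument is shorter and more conceptual, making the obstruction transparent (a rank-one quadratic form cannot coincide with the rank-$n$ form $|x|^2$ for $n\ge 2$); the paper's route has the mild advantage of staying entirely within the tensor-algebraic framework of \cite{Dairbekov_Sharafutdinov} already set up for the surrounding decomposition lemma, without the detour through the polynomial picture.
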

\begin{proof}
 For $m=1$ we have $ pi_{\xi} \phi =i_{\xi}\phi$. Since $\xi \neq 0$ and $ (i_{\xi} \phi)_k = \xi_k \phi$, we obtain $\phi=0$. We proceed by induction and assume that  \begin{align*}
      \xi \neq 0, \ p i_{\xi^{\otimes m}} \phi =0 \implies \phi = 0.
 \end{align*}%
 Suppose that $p i_{\xi^{\otimes m+1}} \phi =0$. Then by \cite[Lemma  5.3]{Dairbekov_Sharafutdinov}we have
 \begin{align*}
     i_{\xi}p i_{\xi^{\otimes m}} \phi-\frac{2}{m+1}i_{\d} (j_{\d}i_{\d})^{-1} j_{\xi} pi_{\xi^{\otimes m}} \phi&=0. 
 \end{align*}
 Denote $f=p i_{\xi^{\otimes m}} \phi$. Taking the inner product of the above equation with $i_{\xi} f$ gives 
 \begin{align}\label{eq_6.16}
     &\langle i_{\xi} f, i_{\xi} f\rangle - \frac{2}{m+1} \langle i_{\d} (j_{\d}i_{\d})^{-1} j_{\xi} f, i_{\xi}f \rangle=0\nonumber\\
     \implies \quad &\langle j_{\xi}i_{\xi} f, f\rangle - \frac{2}{m+1} \langle (j_{\d}i_{\d})^{-1} j_{\xi} f, j_{\d}i_{\xi}f \rangle=0.
 \end{align}
 From \cite[Lemma 3.3.3]{Sharafutdinov_book} we have
 \begin{align*}
     j_{\xi}i_{\xi} f = \frac{|\xi|^2}{m+1}f+\frac{m}{m+1} i_{\xi} j_{\xi} f.
 \end{align*}
 Combining $j_{\d} f=0$ with the formula $j_{\d} i_{\xi} =\frac{2}{m+1} j_{\xi} + \frac{m-1}{m+1} i_{\xi}j_{\d} $ on $S^m$ given in \cite[Equation 5.6]{Dairbekov_Sharafutdinov}, we get
 \begin{align*}
     j_{\d} i_{\xi}f =\frac{2}{m+1} j_{\xi}f.
 \end{align*}
 From \cite[Equation 5.13]{Dairbekov_Sharafutdinov}, which can be used since $j_{\delta} f = 0$, we obtain
 \begin{align*}
     (j_{\d} i_{\d})^{-1} j_{\xi} f = \frac{m(m+1)}{2(n+2m-2)}j_{\xi}f.
 \end{align*}
 The combination of last two displayed equations and \eqref{eq_6.16} give
 \begin{align*}
     |\xi|^2 |f|^2 + m \left( 1- \frac{2}{n+2m-2}\right) |j_{\xi}f|^2=0.
 \end{align*}
 Since $ \big(1- \frac{2}{n+2m-2} \big) \ge 0 $ for  $n\ge 2$ and $m\ge 1$, this implies $ f= p i_{\xi^{\otimes m}} \phi=0$. Hence by induction we have $ \phi=0$. 
\end{proof}
\appendix

\section{construction of special solutions}\label{sec:preliminary}
In this section, we describe the construction of special solutions known as \emph{complex geometric optic (CGO)} solutions  of polyharmonic operators that we have already utilized in the earlier sections.

\subsection{Carleman estimate}
Let $\Omega\subset \R^n$, $n\geq 3$ be a bounded domain with smooth boundary. In this section, we construct Complex Geometric Optics (CGO) solutions for \eqref{operator} following a Carleman estimate approach from \cite{BUK,DOS}. 

We first introduce semiclassical Sobolev spaces. 
For $h>0$ be a small parameter, we define the semiclassical Sobolev space $H^s_{\mathrm{scl}}(\mathbb{R}^n)$, $s \in \R$ as the space $H^s(\Rn)$ endowed with the semiclassical norm
\begin{align*}
  \lVert u \rVert^2_{H^s_{\mathrm{scl}}(\Rn)} = \lVert \langle hD\rangle^s \, u \rVert^2_{L^2(\Rn)}, \quad \langle \xi \rangle= (1+|\xi|^2)^{\frac{1}{2}}.  
\end{align*}
For open sets $\Omega \subset\Rn$ and for non-negative integers $m$, the semiclassical Sobolev space $H^m_{\mathrm{scl}}(\Omega)$ is the space $H^m(\Omega)$ endowed with the following semiclassical norm
\begin{align*}
    \lVert u \rVert^2_{H^m_{\mathrm{scl}}(\Omega)} = \sum\limits_{|\alpha|\le m} \lVert ( hD)^{\alpha}\, u \rVert^2_{L^2(\Omega)}.
\end{align*}
These two norms are equivalent  when $\O=\Rb^n$ and for every integers $m\ge0$.


Let $\wt{\O}$ be an open subset containing $\O$ in its interior and let $\vp\in C^{\infty}(\wt{\O})$ with $\n \vp\neq 0$ in $\overline{\O}$. We consider the semi-classical conjugated Laplacian $P_{0,\vp} = e^{\frac{\vp}{h}} (-h^2 \Delta) e^{\frac{-\vp}{h}}$, where $0<h\ll 1$. The semi-classical principal symbol of this operator $P_{0,\vp}$ is given by $p_{0,\vp}(x,\xi)=|\xi|^2 - |\nabla_x\vp|^2 + 2\I \xi\cdot\nabla_x\varphi$.
\begin{definition}[\cite{KEN}] \label{LCW}
	We say that $\vp\in C^{\infty}(\wt{\O})$ is a limiting Carleman weight for $P_{0,\vp}$
	in $\O$ if
	$\nabla\vp \neq 0$ in $\overline{\O}$ and $\mathrm{\mathrm{Re}}(p_{0,\vp})$, $\mathrm{Im}(p_{0,\vp})$ satisfies
	\[\Big{\{}\mathrm{Re}(p_{0,\vp}), \mathrm{Im}(p_{0,\vp})\Big{\}}(x,\xi)=0 \mbox{ whenever } p_{0,\vp}(x,\xi)=0 \mbox{ for } (x,\xi)\in \widetilde{\O}\times(\mathbb{R}^n\setminus\{0\}),
	\]
	where $\{\cdot,\cdot\}$ denotes the Poisson bracket.
\end{definition}
Examples of such $\vp$ are linear weights
$\vp(x) = \A\cdot x$, where $0\neq \A\in\mathbb{R}^n$ or logarithmic weights $\vp(x)= \log|x-x_0|$ with $x_0 \notin \overline{\widetilde{\O}}$.

As mentioned already, we consider the limiting Carleman weight in this paper to be $\vp(x)= x_1$.
\begin{proposition}[Interior Carleman estimate]\label{Prop: Interior Carleman Estimate}
Let $\vp(x)$ be a limiting Carleman weight for the conjugated semiclassical Laplacian. Then there exists a constant $C=C_{\O, A^j, q}$ such that for $0< h\ll 1 $, we have
	\begin{equation*}
		h^{m}\lVert u\rVert_{L^2(\Omega)} \leq C \lVert h^{2m}e^{\frac{\vp}{h}}(-\Delta)^m e^{-\frac{\vp}{h}}u\rVert_{H^{-2m}_{\mathrm{scl}}},\quad \mbox{ for all } u\in C^{\infty}_0(\Omega).
	\end{equation*}%
\end{proposition}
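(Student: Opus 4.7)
The plan is to derive the estimate by iterating the Salo--Tzou Carleman estimate for the semiclassical Laplacian $m$ times. Introduce the semiclassically conjugated operator
\[ P_\varphi := h^2 e^{\varphi/h}(-\Delta) e^{-\varphi/h}, \]
so that $P_\varphi^m = h^{2m} e^{\varphi/h}(-\Delta)^m e^{-\varphi/h}$. The key input, taken from \cite{Salo-Tzou}, is the ``gain of two derivatives'' Carleman estimate: for each $s \in \R$ there is a constant $C = C(\Omega, s) > 0$ such that
\[ h \|u\|_{H^s_{\mathrm{scl}}(\Omega)} \leq C \|P_\varphi u\|_{H^{s-2}_{\mathrm{scl}}(\Omega)}, \qquad u \in C_0^\infty(\Omega),\ 0 < h \ll 1. \]
The base case $s = 0$ is precisely the Salo--Tzou estimate; the scale-$s$ version then follows by conjugating $u$ with $\langle hD \rangle^s$ and absorbing the commutator $[P_\varphi, \langle hD \rangle^s]$, which carries an extra factor of $h$, into the left-hand side for $h$ small enough.

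Next I will iterate this inequality $m$ times. Since $P_\varphi$ is a differential operator, $u \in C_0^\infty(\Omega)$ forces $P_\varphi^k u \in C_0^\infty(\Omega)$ for every $k \geq 0$, so the hypothesis of the above estimate is preserved under iteration. Applying the scale-$s$ estimate successively with $s = -2k$ for $k = 0, 1, \ldots, m-1$ yields the telescoping chain
\[ h \|P_\varphi^k u\|_{H^{-2k}_{\mathrm{scl}}(\Omega)} \leq C \|P_\varphi^{k+1} u\|_{H^{-2(k+1)}_{\mathrm{scl}}(\Omega)}, \qquad 0 \leq k \leq m-1. \]
Multiplying these $m$ inequalities and using the explicit form of $P_\varphi^m$ I will obtain
\[ h^m \|u\|_{L^2(\Omega)} \leq C^m \|h^{2m} e^{\varphi/h}(-\Delta)^m e^{-\varphi/h} u\|_{H^{-2m}_{\mathrm{scl}}(\Omega)}, \]
which is the claim.

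The only technical step beyond quoting \cite{Salo-Tzou} is the scale-$s$ promotion of the base Carleman estimate, a standard commutator bookkeeping argument that already appears in the polyharmonic Carleman estimates of \cite{KRU1, KRU2, Ghosh-Krishnan, BG_19}. Importantly, no derivative loss accumulates during the iteration because the entire argument is carried out in semiclassical Sobolev spaces, so each step shifts the regularity index by exactly two and pays exactly one factor of $h$. This is in sharp contrast to the naive iteration of the original Carleman estimate, which has a half-derivative loss at each step and thus only accesses perturbations of order up to $m$, as explained in the introduction.
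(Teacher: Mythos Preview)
Your proof is correct and follows exactly the approach the paper indicates: iterate the Salo--Tzou Carleman estimate for the semiclassical Laplacian (with its gain of two derivatives, hence the shifted Sobolev scale) $m$ times, using that $P_\varphi^m = h^{2m}e^{\varphi/h}(-\Delta)^m e^{-\varphi/h}$. The paper itself omits the details and simply refers to \cite{Salo-Tzou} and \cite{KRU1,KRU2,Ghosh-Krishnan} for this iteration argument, so your write-up is in fact more explicit than what appears there.

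One small remark on your closing paragraph: the ``half-derivative loss'' discussed in the paper's introduction is not about the iteration you carried out for the \emph{free} polyharmonic operator---that iteration is lossless, as you correctly observe. The loss becomes relevant only when one tries to absorb lower-order perturbations of high order (up to $2m-1$) into the Carleman estimate, which is needed for CGO solutions of the \emph{perturbed} operator. That obstruction is why the paper works with the linearized problem instead; it does not affect Proposition~\ref{Prop: Interior Carleman Estimate} itself.
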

This follows by iterating a Carleman estimate for the semiclassical Laplacian with a gain of two derivatives proved in \cite{Salo-Tzou}. We omit the proof here.  We refer the reader to \cite{KRU1,KRU2,Ghosh-Krishnan}.
\subsection{Construction of CGO solutions}\label{CGO_construction}
Next we use Proposition \ref{Prop: Interior Carleman Estimate} to construct CGO solutions for the equation $(-\Delta)^m u=0$. To this end, we  state an existence result whose proof  is standard; see \cite{DOS,KRU2} for instance.
\begin{proposition}\label{Prop_Existence}
Let  $\vp$ be as defined in Proposition \ref{Prop: Interior Carleman Estimate}. Then for any $v \in L^{2}(\Omega)$ and small enough $h>0$ one has $u \in H^{2m}(\Omega)$ such that
\begin{equation*}
	e^{-\frac{\vp}{h}}(-\Delta)^m e^{\frac{\vp}{h}} u = v \quad \mbox{in }\Omega, \quad \mbox{with}\quad \|u\|_{H^{2m}_{\mathrm{scl}}(\Omega)} \leq Ch^{m}\|v\|_{L^2(\Omega)}.
\end{equation*}%
\end{proposition}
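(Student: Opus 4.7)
The plan is to deduce existence from the Carleman estimate in Proposition \ref{Prop: Interior Carleman Estimate} by a standard Hahn--Banach/duality argument. The key point is that the formal adjoint of the conjugated operator $e^{-\vp/h}(-\Delta)^m e^{\vp/h}$ is $e^{\vp/h}(-\Delta)^m e^{-\vp/h}$, and Proposition \ref{Prop: Interior Carleman Estimate} applied with $\vp$ replaced by $-\vp$ (also a limiting Carleman weight) yields the a priori bound
\[
h^{m}\|w\|_{L^2(\Omega)} \leq C \|h^{2m} e^{\vp/h}(-\Delta)^m e^{-\vp/h} w\|_{H^{-2m}_{\mathrm{scl}}(\Omega)}, \qquad w \in C^\infty_0(\Omega).
\]

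First I would exploit this estimate to show that the linear functional
\[
L: h^{2m} e^{\vp/h}(-\Delta)^m e^{-\vp/h} w \longmapsto \langle v, w \rangle_{L^2(\Omega)}
\]
is well-defined and bounded on the subspace $V \subset H^{-2m}_{\mathrm{scl}}(\Omega)$ consisting of such test data. Indeed, injectivity of the map $w \mapsto h^{2m} e^{\vp/h}(-\Delta)^m e^{-\vp/h} w$ on $C^\infty_0(\Omega)$ is guaranteed by the Carleman estimate, and Cauchy--Schwarz together with the estimate above gives
\[
|L(h^{2m} e^{\vp/h}(-\Delta)^m e^{-\vp/h} w)| \leq \|v\|_{L^2(\Omega)}\|w\|_{L^2(\Omega)} \leq C h^{-m}\|v\|_{L^2(\Omega)} \, \|h^{2m} e^{\vp/h}(-\Delta)^m e^{-\vp/h} w\|_{H^{-2m}_{\mathrm{scl}}(\Omega)}.
\]

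Next I would apply Hahn--Banach to extend $L$ to a bounded linear functional on the whole space $H^{-2m}_{\mathrm{scl}}(\Omega)$ with the same norm bound, and then invoke Riesz representation: there exists $\tilde u \in H^{2m}_{\mathrm{scl}}(\Omega)$ with $\|\tilde u\|_{H^{2m}_{\mathrm{scl}}(\Omega)} \leq C h^{-m}\|v\|_{L^2(\Omega)}$ such that $L(f) = \langle f, \tilde u \rangle$ for every $f \in H^{-2m}_{\mathrm{scl}}(\Omega)$. Testing against $f = h^{2m} e^{\vp/h}(-\Delta)^m e^{-\vp/h} w$ and unwinding the pairing gives
\[
\langle v, w \rangle_{L^2(\Omega)} = \langle h^{2m} e^{-\vp/h}(-\Delta)^m e^{\vp/h}\tilde u, w \rangle_{L^2(\Omega)}, \qquad w \in C^\infty_0(\Omega),
\]
so that $u := h^{2m}\tilde u \in H^{2m}(\Omega)$ satisfies $e^{-\vp/h}(-\Delta)^m e^{\vp/h} u = v$ in $\Omega$ in the distributional sense, and $\|u\|_{H^{2m}_{\mathrm{scl}}(\Omega)} \leq C h^{m}\|v\|_{L^2(\Omega)}$ as required.

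I expect the main technical nuisance to be bookkeeping of the semiclassical dualities: one must verify that the $H^{2m}_{\mathrm{scl}}$--$H^{-2m}_{\mathrm{scl}}$ pairing used in the Riesz representation really does reproduce $\langle v, w\rangle_{L^2}$ when composed with the conjugated polyharmonic operator, and that the powers of $h$ track correctly so that the final estimate has the clean exponent $h^m$. Interior ellipticity of $(-\Delta)^m$ then guarantees that the distributional solution $u$ is in fact in $H^{2m}(\Omega)$, so no separate regularity step is needed beyond what the Riesz representer already provides.
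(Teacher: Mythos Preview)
Your proposal is correct and is precisely the standard Hahn--Banach duality argument the paper has in mind; the paper itself omits the proof and simply refers to \cite{DOS,KRU2}, where exactly this approach is carried out. One small slip: the Carleman estimate in Proposition~\ref{Prop: Interior Carleman Estimate} is already stated for the operator $e^{\vp/h}(-\Delta)^m e^{-\vp/h}$, which is the adjoint you need, so there is no need to replace $\vp$ by $-\vp$ --- the displayed inequality you wrote is just the original estimate verbatim.
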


We now use Proposition \ref{Prop_Existence} to construct a solution for $(-\Delta)^m\,u=0$ of the form
\begin{align}\label{CGO_0}
	u(x;h) &= e^{\frac{\vp + \I \psi}{h}}\left(a_0(x) + ha_1(x) + \dots + h^{m-1}a_{m-1}(x) + r(x;h)\right),\\
	\notag &=e^{\frac{\vp + \I \psi}{h}}( A(x;h) + r(x;h)), \quad\mbox{where } A(x;h)= \sum_{j=0}^{m-1} h^ja_j(x).
\end{align}
Here $\{a_j(x)\}$ and $r(x;h)$ will be determined later. 
We choose $\psi(x) \in C^{\infty}(\overline{\Omega})$ in such a way that
$p_{0,\vp}(x,\nabla\psi) =0$. This implies $
|\nabla \vp| = |\nabla \psi|$  and $\nabla\vp\cdot\nabla\psi = 0$ in $\Omega$. We calculate the term $(-\Delta)^m e^{\frac{\vp+i\psi}{h}}A(x;h)$ in $\Omega$.
Due to the choices of $\vp$ and $\psi$ we have $\nabla(\vp+\I\psi) \cdot \nabla(\vp+\I\psi) = 0$ in $\Omega$ and thus we obtain
\begin{equation}\label{CGO_1}
\begin{aligned}
e^{-\frac{\vp+i\psi}{h}}(-\Delta)^m e^{\frac{\vp+i\psi}{h}}A(x;h)
=& \left(-\frac{1}{h}T - \Delta \right)^{m} A(x;h) 
\end{aligned}
\end{equation}
where
\begin{equation}\label{term_H}
\begin{gathered}
	T = 2\nabla_x(\vp+\I\psi)\cdot\nabla_x + \Delta_x(\vp+\I\psi),
\end{gathered}
\end{equation}
We now make the coefficient of $h^{-m+j}$ to be $0$ for $0\le j \le m-1$ and obtain the following system of transport equations.
\begin{align}\label{CGO_2.0}
\begin{cases}
    T^m a_0(x) = 0 \quad \hspace{3cm} \mbox{in }\Omega,	\\
T^m a_j(x) = -\sum_{k=1}^{j} M_{k} a_{j-k}(x) \quad \mbox{in }\Omega, \quad \mbox{and}\quad  1\leq j\leq m-1.	
\end{cases}
\end{align}
where $M_j$'s are certain differential operator of order $m+j\, (1\le j\le m-1)$ and can be computed  from \eqref{CGO_1}.


It is well known that equations in \eqref{CGO_2.0}  have smooth solutions; see for instance \cite{DOS}.  We provide an explicit form for the smooth solution $a_0$ solving \eqref{CGO_2.0} for our inverse problem, which will be effective in getting the generalized MRT of the coefficients.
Using $a_0(x),\cdots,a_{m-1}(x) \in C^{\infty}(\Omega)$ satisfying \eqref{CGO_2.0}, we see that
\[  e^{-\frac{\vp+\I\psi}{h}}(-\Delta)^m e^{\frac{\vp+\I\psi}{h}}a(x;h)\simeq\Oc(1).
\]
Now if $u(x;h)$ as in \eqref{CGO_0} is a solution of $(-\Delta)^m u(x;h)=0$ in $\Omega$, we see that
\begin{align*}
0 = e^{-\frac{\vp+\I\psi}{h}}(-\Delta)^m u = e^{-\frac{\vp+\I\psi}{h}} (-\Delta)^m e^{\frac{\vp+\I\psi}{h}} \left(A(x;h) + r(x;h)\right).
\end{align*}%
This implies 
\begin{align*}
e^{-\frac{\vp+\I\psi}{h}} (-\Delta)^m e^{\frac{\vp+\I\psi}{h}}r(x;h) = F(x;h), \quad \mbox{for some }F(x;h) \in L^2(\Omega), \quad \mbox{for all } h>0 \mbox{ small}.
\end{align*}
By our choices, $a_j(x)$ annihilates all the terms of order $h^{-m+j}$ in $e^{-\frac{\vp+\I\psi}{h}}\Lc(x;D) e^{\frac{\vp+\I\psi}{h}}a(x;h)$ in $\Omega$ for $j=0,\dots,m-1$. Thus we get $\|F(x,h)\|_{L^2(\Omega)} \leq C$, where $C>0$ is uniform in $h$ for $h\ll 1$.

Using Proposition \ref{Prop_Existence} we have the existence of $r(x;h) \in H^{2m}(\Omega)$ solving
\[e^{-\frac{\vp+\I\psi}{h}}(-\Delta)^m e^{\frac{\vp+\I\psi}{h}}r(x;h) = F(x;h),
\]
with the estimate
\[	\|r(x;h)\|_{H^{2m}_{\mathrm{scl}}(\Omega)} \leq C h^m, \quad \mbox{for }h>0\mbox{ small enough}.
\]
Similarly, we can construction  CGO solution of the adjoint equation  $ (e^{-\frac{\vp+\I\psi}{h}}(-\Delta)^m e^{\frac{\vp+\I\psi}{h}})^* u= e^{\frac{\vp+\I\psi}{h}}(-\Delta)^m e^{-\frac{\vp+\I\psi}{h}}u=0 $. We now sum up the above calculation in the next lemma.
\begin{lemma}\label{lm:cgo_forms}
    Let $h>0$ small enough and $\vp,\psi \in C^{\infty}(\overline{\Omega})$ satisfy $p_{0,\vp}(x,\nabla\psi) =0$. There are suitable choices of $a_0(x), \dots, a_{m-1}(x), b_0(x), \dots, b_{m-1}(x) \in C^{\infty}(\overline{\Omega})$ 
and $r(x;h), \wt{r}(x;h) \in H^{2m}(\Omega)$ such that
\begin{equation*}
\begin{aligned}
u(x;h) =& e^{\frac{\vp + \I \psi}{h}}\left(a_0(x) + ha_1(x) + \dots + h^{m-1}a_{m-1}(x) + r(x;h)\right)=  e^{\frac{\vp + \I \psi}{h}} \tilde{A},\\
v(x;h) =& e^{-\frac{\vp + \I \psi}{h}}\left(b_0(x) + hb_1(x) + \dots + h^{m-1}b_{m-1}(x) + \wt{r}(x;h)\right)=  e^{-\frac{\vp + \I \psi}{h}}\tilde{B}
\end{aligned}
\end{equation*}%
solving $(-\Delta)^m u(x)= (-\Delta)^m v(x)=0$ in $\Omega$ for $h>0$ small enough, with the estimates $\|r(x;h)\|_{H^{2m}_{\mathrm{scl}}(\Omega)}, \|\wt{r}(x;h)\|_{H^{2m}_{\mathrm{scl}}(\Omega)} \leq C h^{m}$.
Moreover, $a_0(x)$ and $b_0(x)$ solve the transport equations
\begin{align*}
    T^m a_0=T^m b_0=0.
\end{align*}%
\end{lemma}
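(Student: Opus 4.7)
The plan is to build $u$ via a WKB ansatz and reduce everything to transport equations plus a Carleman-based remainder solve. Setting $u = e^{(\vp+\I\psi)/h}(A+r)$ with $A(x;h) = \sum_{j=0}^{m-1} h^j a_j(x)$, I first conjugate the operator and use that the hypothesis $p_{0,\vp}(x,\nabla\psi)=0$ is equivalent to $\nabla(\vp+\I\psi)\cdot\nabla(\vp+\I\psi) = 0$. A direct calculation then yields
\[ e^{-(\vp+\I\psi)/h}(-\Delta)\, e^{(\vp+\I\psi)/h} = -\Delta - h^{-1} T, \]
with $T$ as in \eqref{term_H}, and raising to the $m$-th power gives formula \eqref{CGO_1}.

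Expanding $(-h^{-1}T-\Delta)^m$ and applying it to $A$, I collect powers of $h$. The most singular contribution at order $h^{-m}$ is $(-1)^m T^m a_0$, so $a_0$ is chosen as a smooth solution of the top transport equation $T^m a_0 = 0$; existence is classical, and an explicit form convenient for the inverse problem can be read off after rotating coordinates. Solving iteratively, at each order $h^{-m+j}$ for $1\le j\le m-1$ the coefficient takes the shape $(-1)^m T^m a_j + \sum_{k=1}^{j} M_k a_{j-k}$ for certain differential operators $M_k$ coming from the multinomial expansion of $(-h^{-1}T-\Delta)^m$, and I take $a_j$ to be a smooth solution of $T^m a_j = -\sum_{k=1}^{j} M_k a_{j-k}$, existence again being standard. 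After these choices every power $h^{-m},\ldots,h^{-1}$ in $e^{-(\vp+\I\psi)/h}(-\Delta)^m e^{(\vp+\I\psi)/h} A$ is annihilated, leaving a residual $F \in L^2(\Omega)$ with $\|F\|_{L^2(\Omega)} \le C$ uniformly in $h$.

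The correction $r(x;h)$ is then chosen to cancel this residual, i.e.\ to solve $e^{-(\vp+\I\psi)/h}(-\Delta)^m e^{(\vp+\I\psi)/h} r = -F$. Existence together with the estimate $\|r\|_{H^{2m}_{\mathrm{scl}}(\Omega)} \le C h^m$ is exactly the content of Proposition \ref{Prop_Existence}, which in turn comes by duality from the iterated Carleman estimate of Proposition \ref{Prop: Interior Carleman Estimate}. Since $\vp$ is a limiting Carleman weight, so is $-\vp$, and the analogous construction with the sign of $\vp$ reversed produces amplitudes $b_0,\ldots,b_{m-1}$ and a remainder $\tilde r$ for $v$; indeed one checks
\[ e^{(\vp+\I\psi)/h}(-\Delta)^m e^{-(\vp+\I\psi)/h} = (-\Delta + h^{-1}T)^m, \]
so the leading transport equation is again $T^m b_0 = 0$, and the lower-order $b_j$ are obtained by the same recursion.

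The hard part is not analytic but purely bookkeeping: identifying the operators $M_k$ in the recursion and verifying that $F$ stays bounded in $L^2$ uniformly in $h$ despite the many cross-terms in the expansion of $(-h^{-1}T-\Delta)^m$. Once that is in hand, the existence of the $a_j, b_j$ reduces, after rotating coordinates so that $T$ becomes proportional to $\partial_{y_1} + \I\partial_{y_2}$, to the classical theory for $\bar\partial^m$ (polyanalytic functions), and the semiclassical $H^{2m}_{\mathrm{scl}}$-bound on $r$ is delivered directly by Proposition \ref{Prop_Existence}.
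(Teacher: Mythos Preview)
Your proposal is correct and follows essentially the same approach as the paper: WKB ansatz, conjugation identity \eqref{CGO_1}, iterative transport equations \eqref{CGO_2.0} to kill the singular powers $h^{-m},\dots,h^{-1}$, and then Proposition \ref{Prop_Existence} to solve for the remainder with the $H^{2m}_{\mathrm{scl}}$ bound. The construction of $v$ by flipping the sign of the phase and observing that the leading transport equation is again $T^m b_0=0$ matches the paper's treatment of the adjoint equation.
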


\subsection{Solutions with linear phase}

Now we choose a suitable coordinate system and express the solutions in that  system. We choose $\vp(x)= x\cdot e_1=x_1$, where $e_1=(1,0,0\cdots,0)$. Choose an orthonormal frame $ \{\eta_1=e_1,\eta_2,\cdots,\eta_n\}$, where $\{\eta_2,\cdots,\eta_n\}$ are unit vectors on the hyperplane perpendicular to $e_1$. Then we choose $\psi(x)=x\cdot \eta_2$. We denote $y=(y_1,y_2,\cdots,y_n)$ is the coordinate  with respect to new basis and $y'=(y_2,\cdots,y_n)$, $ y''= (y_3,\cdots,y_n)$.  With this choice of $\vp$ and $\psi$ the solutions in Lemma \ref{lm:cgo_forms} take the form 
\begin{equation}\label{cgos2}
\begin{aligned}
u(x;h) =& e^{\frac{1}{h}(e_1 + \I \eta_2) \cdot x}\left(a_0(x) + ha_1(x) + \dots + h^{m-1}a_{m-1}(x) + r(x;h)\right)=  e^{\frac{1}{h}(e_1 + \I \eta_2) \cdot x} \tilde{A},\\
v(x;h) =& e^{-\frac{1}{h}(e_1 + \I \eta_2) \cdot x}\left(b_0(x) + hb_1(x) + \dots + h^{m-1}b_{m-1}(x) + \wt{r}(x;h)\right)=  e^{-\frac{1}{h}(e_1 + \I \eta_2) \cdot x}\tilde{B}
\end{aligned}
\end{equation}
The transport equation $T^ma_0=0$  becomes   \[ T^ma_0= 2^m\,(\PD_{y_1}+\I \PD_{y_2})^m a_0 =0.\]
Denote the complex variable $ z = y_1+\I y_2$. Then the above transport equation reduces to 
\begin{align*}
    \PD_{\bar{z}}^m a_0=0.
\end{align*}
A complex valued function satisfying $\PD_{\bar{z}}^m a_0=0$ is known as poly-analytic function; see \cite{polyanalytic_functions} for more details. The general solution of $\PD_{\bar{z}}^m a_0=0$ is given in the following lemma.
\begin{lemma}\label{general_solu}
   The general solution of $\PD_{\bar{z}}^m a_0=0$ is given by $ a_0=\sum\limits_{k=0}^{m-1} (z-\bar{z})^k f_{k}(z)$, where $f_k$ is a holomorphic function for all $0\le k \le m-1$.
\end{lemma}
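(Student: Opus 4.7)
The plan is to prove this by induction on $m$. The base case $m=1$ reduces to showing that any solution of $\PD_{\bar{z}} a_0 = 0$ is holomorphic, which is just the Cauchy-Riemann equations and gives $a_0 = f_0(z)$, matching the claimed form. Any function of the form $\sum_{k=0}^{m-1}(z-\bar{z})^k f_k(z)$ with $f_k$ holomorphic is easily checked to satisfy $\PD_{\bar{z}}^m a_0 = 0$ by noting that $\PD_{\bar{z}}$ lowers the power of $(z-\bar{z})$ by one while annihilating the $f_k$ factors; so the nontrivial content is the converse inclusion.

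For the induction step I would assume the representation for order $m-1$ and take $a_0$ with $\PD_{\bar{z}}^m a_0 = 0$. Writing this as $\PD_{\bar{z}}^{m-1}(\PD_{\bar{z}} a_0) = 0$, the induction hypothesis yields holomorphic functions $g_0,\ldots,g_{m-2}$ with
\[
\PD_{\bar{z}} a_0 = \sum_{k=0}^{m-2}(z-\bar{z})^k g_k(z).
\]
The key elementary computation is $\PD_{\bar{z}}\bigl[(z-\bar{z})^{k+1} g_k(z)\bigr] = -(k+1)(z-\bar{z})^k g_k(z)$, valid because $g_k$ is holomorphic. This furnishes the particular solution
\[
a_0^{\text{part}} = -\sum_{k=0}^{m-2} \frac{1}{k+1}(z-\bar{z})^{k+1} g_k(z),
\]
which after re-indexing is of the form $\sum_{k=1}^{m-1}(z-\bar{z})^k \tilde f_k(z)$ with $\tilde f_k$ holomorphic.

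Finally I would note that the general solution is obtained by adding an element of the kernel of $\PD_{\bar{z}}$, i.e.\ an arbitrary holomorphic function $f_0(z)$, which supplies the $k=0$ term and completes the expansion $a_0 = \sum_{k=0}^{m-1}(z-\bar{z})^k f_k(z)$. There is no serious obstacle here; the only thing to watch is that the argument is purely local/formal in $z$, so one should specify the regularity class (e.g.\ smooth or distributional) in which $\PD_{\bar{z}}^m a_0 = 0$ is interpreted, and that the holomorphic functions produced at each induction step retain the ambient regularity, which follows from hypoellipticity of $\PD_{\bar{z}}$.
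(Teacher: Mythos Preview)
Your proposal is correct and matches the paper's approach: the paper omits the proof, stating only that it follows from a standard induction argument, which is exactly the argument you have written out.
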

  We do not give the proof of this lemma as it follows from standard induction argument; see  \cite[Lemma 2.6]{polyharmonic_mrt_application}. This immediately gives the following particular solution of $T^m a_0=T^m b_0=0$ having the form 
  \begin{align}\label{particular_form}
      a_0=b_0= y^k_2 f(z) g(y''), \quad \mbox{for each} \quad 0\le k\le m-1,
  \end{align}
  where $ g$ is any smooth function in $y''$ variable.
  This particular form of solution will be helpful in order to get the MRT of unknown coefficients.

\section{linearization of navier to neumann map}\label{linearization}
In this section, we linearize the Navier-to-Neumann map by computing its Fr\'echet derivative. We follow the analogous argument used in \cite[Lemma 4.4]{koch_ruland_salo_instability}.
To this end we first recall the operator \eqref{operator} together with its Navier boundary conditions.
\begin{equation}\label{operator3}
\begin{aligned}
     \begin{cases}
         	\Lc(x,D)=
		(-\Delta)^m + Q(x,D), \quad \hspace{4.3cm} \mbox{in} \quad \Omega\\
		\quad\quad\gamma u=(u,-\Delta u, \cdots, (-\Delta)^{m-1}u)=(f_0,\cdots,f_{m-1}) \qquad \mbox{on} \quad \PD\Omega,
		 \end{cases}
\end{aligned}
		\end{equation}
	where $ Q(x,D)$ is a partial differential operator of order $2m-1$ and  given by: 
	\begin{equation*}
	    Q(x,D)= \sum\limits_{l=0}^{2m-1} a^{l}_{i_1\cdots i_{l}}(x) \, D^{i_1\cdots i_l}.
	\end{equation*}
If $0$ is not an eigen value of $ \Lc(x,D)$, then \eqref{operator3} has a unique solution $ u\in H^{2m}(\Omega)$ for any $ (f_0,\cdots,f_{m-1})\in \prod_{k=0}^{m-1} H^{2m-2k-\frac{1}{2}}(\partial\Omega) $; see \cite{Polyharmonic_Book}. The Navier to Neumann map is denote by $\N_{Q}$ and defined as follows.
\begin{align}
\begin{cases}
     \N_{Q}:\prod_{k=0}^{m-1} H^{2m-2k-\frac{1}{2}}(\partial\Omega) \longrightarrow \prod_{k=0}^{m-1} H^{2m-2k-\frac{3}{2}}(\partial\Omega)   \quad \mbox{by}\\
 \N_{Q}(f_0,\cdots,f_{m-1})= ( \PD_{\nu} f_0|_{\PD\Omega}, \cdots, \PD_{\nu} f_{m-1}|_{\PD\Omega}).
\end{cases}
\end{align}
The Fr\'echet derivative of $\N_{Q}$ is given in the next lemma. 
\begin{lemma}
    Suppose $0$ is not an eigen value of $ (-\Delta)^m\,u+Q(x,D)u=0$ in $ \Omega$.  Let $P_{Q}: \prod_{k=0}^{m-1} H^{2m-2k-\frac{1}{2}}(\partial\Omega) \rightarrow H^{2m}(\Omega) $ be the solution operator for the Dirichlet problem
    \begin{equation}
        \begin{aligned}
        \begin{cases}
              ( (-\Delta)^m + Q(x,D))P_{Q}f=0  \quad \hspace{5.6cm} \mbox{in}\quad \Omega\\
     \qquad\qquad\qquad\quad\quad  \gamma P_{Q}f=(P_{Q}f, -\Delta P_Qf, \cdots, (-\Delta)^{m-1} P_{Q}f)
     \quad \mbox{on}\quad \PD\Omega.
        \end{cases}
      \end{aligned}
    \end{equation}
    
    Suppose  $G_{Q}:L^2(\Omega)\rightarrow \mathcal{D}(\Lc)$ be the Green operator satisfies 
    \begin{align*}
        ((-\Delta)^m +Q)G_{Q}F=F \quad \mbox{in}\quad \Omega, \quad \quad \gamma G_{Q}F=0 \quad \mbox{on}\quad \PD\Omega.
    \end{align*}
    Then the linearized (or Fr\'echet derivative of) Navier to Neumann map  \[B_Q=(D\N)_Q:L^{\infty}(\Omega)\longrightarrow B\left(\prod_{k=0}^{m-1} H^{2m-2k-\frac{1}{2}}(\partial\Omega) ,\prod_{k=0}^{m-1} H^{2m-2k-\frac{3}{2}}(\partial\Omega)\right)\] is given by
    \begin{align}\label{linearized_dn_map}
        (B_QH)(f)= \PD_{\nu} \gamma G_{Q}(-HP_{Q} f)|_{\PD\Omega}.
    \end{align}
\end{lemma}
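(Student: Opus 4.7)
The plan is to compute the Fr\'echet derivative directly from the definition by differentiating the solution with respect to the coefficient perturbation. For $\epsilon \in \mathbb{R}$ small and $H \in L^{\infty}(\Omega)$, set $u_{\epsilon} := P_{Q+\epsilon H}f$, which is the unique $H^{2m}(\Omega)$ solution of
\begin{equation*}
((-\Delta)^m + Q + \epsilon H)u_{\epsilon} = 0 \text{ in } \Omega, \qquad \gamma u_{\epsilon} = f \text{ on } \partial\Omega.
\end{equation*}
Since $0$ is not an eigenvalue of $\Lc = (-\Delta)^m + Q$, a standard Neumann series / resolvent argument (using that $H$ acts as a bounded multiplication operator on $L^2(\Omega)$) shows that $0$ is not an eigenvalue of $\Lc + \epsilon H$ for $|\epsilon|$ small, and that the corresponding solution map $(Q + \epsilon H, f) \mapsto u_{\epsilon}$ is well defined and smooth in $\epsilon$ with values in $H^{2m}(\Omega)$.

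Next I would identify the derivative $v := \partial_{\epsilon} u_{\epsilon}|_{\epsilon = 0}$. Subtracting the equations for $u_{\epsilon}$ and $u_0 = P_Q f$ yields
\begin{equation*}
((-\Delta)^m + Q)(u_{\epsilon} - u_0) = -\epsilon H u_{\epsilon} \text{ in } \Omega, \qquad \gamma(u_{\epsilon} - u_0) = 0 \text{ on } \partial\Omega.
\end{equation*}
Dividing by $\epsilon$ and letting $\epsilon \to 0$, using the continuity $u_{\epsilon} \to u_0$ in $H^{2m}(\Omega)$, gives
\begin{equation*}
((-\Delta)^m + Q)v = -H P_Q f \text{ in } \Omega, \qquad \gamma v = 0 \text{ on } \partial\Omega.
\end{equation*}
By the defining property of the Green operator $G_Q$, this identifies $v = G_Q(-H P_Q f)$.

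To conclude, differentiate the identity $\Nc_{Q+\epsilon H}(f) = \PD_{\nu} \gamma u_{\epsilon}|_{\PD\Omega}$ with respect to $\epsilon$ at $\epsilon = 0$. The continuity of the Neumann trace operator $\PD_{\nu} \gamma \colon H^{2m}(\Omega) \to \prod_{k=0}^{m-1} H^{2m-2k-\frac{3}{2}}(\PD\Omega)$ together with the $H^{2m}$-differentiability from the first paragraph permits interchanging $\PD_{\epsilon}$ with $\PD_{\nu}\gamma|_{\PD\Omega}$, and we obtain
\begin{equation*}
(B_Q H)(f) = \PD_{\nu}\gamma v|_{\PD\Omega} = \PD_{\nu}\gamma G_Q(-H P_Q f)|_{\PD\Omega},
\end{equation*}
which is \eqref{linearized_dn_map}. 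Boundedness of $B_Q H$ between the claimed Sobolev spaces follows from the boundedness of $P_Q$, the multiplication by $H \in L^{\infty}(\Omega)$ on $L^2(\Omega)$, and the mapping properties of $G_Q$ and the Neumann trace.

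The only real subtlety is the $H^{2m}$-differentiability of $\epsilon \mapsto u_{\epsilon}$: one needs the estimate $\|u_{\epsilon} - u_0 - \epsilon v\|_{H^{2m}(\Omega)} = o(\epsilon)$, which follows from rewriting the remainder as a Green operator applied to $-\epsilon H(u_{\epsilon} - u_0)$ and invoking the uniform $O(\epsilon)$ bound on $\|u_{\epsilon} - u_0\|_{H^{2m}(\Omega)}$; this is the main technical point, and is the polyharmonic analogue of the standard linearization argument in \cite{koch_ruland_salo_instability}.
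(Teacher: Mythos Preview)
Your proof is correct and follows essentially the same route as the paper's: both derive the PDE satisfied by the difference $P_{Q+H}f - P_Q f$, express it via the Green operator $G_Q$, and control the remainder $G_Q(-H(P_{Q+H}f - P_Q f))$ using the bound $\|P_{Q+H}f - P_Q f\|_{H^{2m}} \lesssim \|H\|_{L^\infty}\|f\|$ to obtain the $O(\|H\|_{L^\infty}^2)$ estimate. The only cosmetic difference is that you parametrize by $\epsilon H$ and phrase the computation as a Gâteaux derivative before addressing the remainder, whereas the paper works directly with arbitrary small $H$ and verifies the Fr\'echet condition in one step; since your final paragraph supplies exactly the needed $o(\epsilon)$ (equivalently $O(\|H\|_{L^\infty}^2)$) remainder bound, the two arguments are interchangeable.
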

\begin{proof}
Suppose $ \lVert H \rVert_{L^{\infty}(\Omega)}$ is small so that $\N_{Q+H}$ is well defined, where $H$ is same as $Q$ with different set of smooth tensor fields. Given $f=(f_0,f_1,\dots,f_{m-1})\in \prod_{k=0}^{m-1} H^{2m-2k-\frac{1}{2}}(\partial\Omega)$  we have 
\begin{align*}
    \N_{Q+H}f-\N_Q f= \PD_{\nu}(\gamma P_{Q+H}f-\gamma P_{Q}f)|_{\PD\Omega}.
\end{align*}
The function $ w\coloneqq P_{Q+H}f-P_{Q}f$ satisfies the following partial differential equation.
\begin{align*}
    \lr{(-\Delta)^m +Q(x,D)}w &= -H\,w-H\,P_{Q}f \quad \hspace{2.7cm} \mbox{in}\quad \Omega\\  \gamma w&=(w,(-\Delta) w, \cdots, (-\Delta)^{m-1} w)=0 \quad  \mbox{on} \quad \PD\Omega.
\end{align*}
\newcommand{\nrm}[1]{\lVert #1 \rVert}
We can write $w=G_{Q} (-H\,w)+ G_{Q}(-H P_{Q}f)$ and $ w\in \mathcal{D}(\Lc)$.
Utilizing the continuity of the Green operator $  G_{Q}(H\,w) $  we obtain
\begin{align*}
    \lVert G_{Q}(H\,w)\rVert_{H^{2m}(\Omega)} 
    &\le c \lVert H\,w\rVert_{L^2(\Omega)}\le \frac{1}{2} \lVert w\rVert_{H^{2m}(\Omega)}  \quad \mbox{if} \quad \lVert H\rVert_{L^{\infty}(\Omega)} \quad \mbox{is small}. 
\end{align*}
We have $ \lVert w\rv_{H^{2m}(\Omega)} = \nrm {G_{Q} (-H\,w)+ G_{Q}(-H P_{Q}f)}_{H^{2m}(\Omega)}$. The combination of this  with triangle inequality and last displayed relation implies $\lVert w\rv_{H^{2m}(\Omega)} \le \lv H\rv_{L^{\infty}(\Omega)}\,\lv f\rv_{\prod_{k=0}^{m-1} H^{2m-2k-\frac{1}{2}}(\partial\Omega)}.  $ Next we observe that,
\begin{align*}
   (\N_{Q+H}-\N_Q -D\N_{Q}(H))(f)=  \PD_{\nu} \lr{\gamma w- \gamma G_{Q}HP_{Q}f}|_{\PD\Omega}= \PD_{\nu}(\gamma G_{Q}(-Hw))|_{\PD\Omega}.
\end{align*}
By trace theorem, continuity of $G_{Q}$ and $ \lVert w\rv_{H^{2m}(\Omega)} \le \lv H\rv_{L^{\infty}(\Omega)}\,\lv f\rv_{\prod_{k=0}^{m-1} H^{2m-2k-\frac{1}{2}}(\partial\Omega)}$  we obtain from above 
\begin{align*}
   &\lVert \PD_{\nu}(\gamma G_{Q}(-Hw))\rVert_{\prod_{k=0}^{m-1} H^{2m-2k-\frac{3}{2}}(\partial\Omega)} \\ &\quad\le   \lVert G_{Q}(H\,w)\rVert_{H^{2m}(\Omega)}  \le \lv H\rv_{L^{\infty}(\Omega)}\, \lVert w\rv_{H^{2m}(\Omega)} \le  \lv H\rv^2_{L^{\infty}(\Omega)}\, \lv f\rv_{\prod_{k=0}^{m-1} H^{2m-2k-\frac{1}{2}}(\partial\Omega)}.
\end{align*}

This proves that the Fr\'echet derivative of $Q\mapsto \N_{Q}$ at $Q$ is $B_Q$.
\end{proof}
Our next result gives the required integral identity  involving unknown coefficients under the assumption that $B_{Q}=0$ at $Q=0$. Compare next result with \eqref{integral_identity} where a formal computation is given.
\begin{lemma}\label{prop:linearized_dn_map_vanish}
Assume that the linearized (or Fr\'echet derivative of) Navier to Neumann map  vanishes at $Q=0$ $i.e.,$ $(B_{0}H)f=0$. Then the following integral identity
\begin{align*}\label{integral_identity}
    \int\limits_{\Omega} \sum\limits_{l=0}^{2m-1}\, \tilde{a}^l_{i_1\cdots i_l}(x)\,D^{i_1\cdots i_l} u\, v=0 \quad \mbox{such that} \quad \Delta^m u=\Delta^m v=0 \quad \mbox{in} \quad \Omega.
\end{align*}
holds. Where $H$ is same as $Q$ represented by $\tilde{a}^l$, for certain smooth symmetric tensor fields of order $l$ for each $0\le l\le 2m-1$.
\end{lemma}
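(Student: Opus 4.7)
The plan is to unfold the explicit formula \eqref{linearized_dn_map} for $B_Q$ at $Q = 0$ and then apply a Green's identity for the polyharmonic operator. Essentially, the hypothesis that $B_0 H$ vanishes gives a homogeneous Cauchy data configuration for an auxiliary function, which converts the interior pairing into the desired integral identity.

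First, I would fix an arbitrary $f \in \prod_{k=0}^{m-1} H^{2m-2k-\frac{1}{2}}(\PD\Omega)$ and set $u = P_0 f$. By definition of $P_0$, the function $u \in H^{2m}(\Omega)$ satisfies $(-\Delta)^m u = 0$ in $\Omega$ with $\gamma u = f$ on $\PD\Omega$. Next I would set $w = G_0(-Hu)$, so that $w \in \mathcal{D}(\Lc)$ and
\[
(-\Delta)^m w = -Hu \quad \text{in } \Omega, \qquad \gamma w = 0 \quad \text{on } \PD\Omega.
\]
Using formula \eqref{linearized_dn_map}, the hypothesis $(B_0 H)f = 0$ reads $\PD_\nu \gamma w|_{\PD\Omega} = 0$. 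Concretely, both $(-\Delta)^k w|_{\PD\Omega} = 0$ and $\PD_\nu (-\Delta)^k w|_{\PD\Omega} = 0$ for every $0 \le k \le m-1$.

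Second, for any $v \in H^{2m}(\Omega)$ with $(-\Delta)^m v = 0$ in $\Omega$, I would invoke the Green's identity for $(-\Delta)^m$:
\[
\int_{\Omega} \left[(-\Delta)^m w\right] v \,dx - \int_{\Omega} w \left[(-\Delta)^m v\right] dx = \sum_{k=0}^{m-1} \int_{\PD\Omega} \Big[ (-\Delta)^{m-1-k} v \, \PD_\nu (-\Delta)^k w - \PD_\nu (-\Delta)^{m-1-k} v \, (-\Delta)^k w \Big] \, dS.
\]
Every boundary term vanishes by the preceding step, and the second interior term vanishes because $(-\Delta)^m v = 0$. Substituting $(-\Delta)^m w = -Hu$ yields
\[
\int_{\Omega} Hu \cdot v \,dx = 0, \quad \text{i.e.} \quad \int_{\Omega} \sum_{l=0}^{2m-1} \tilde a^{l}_{i_1\cdots i_l}(x)\, D^{i_1\cdots i_l} u \cdot v \, dx = 0.
\]

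Finally, I would observe that as $f$ ranges over $\prod_{k=0}^{m-1} H^{2m-2k-\frac{1}{2}}(\PD\Omega)$, the function $u = P_0 f$ ranges over all of
\[
\{ u \in H^{2m}(\Omega) : (-\Delta)^m u = 0 \text{ in } \Omega \};
\]
indeed, any such $u$ is recovered by taking $f = \gamma u$ and using uniqueness of the Navier problem (which holds since $0$ is not an eigenvalue of $(-\Delta)^m$ with Navier boundary conditions). This yields the desired identity for arbitrary pairs of polyharmonic $u$ and $v$. The only technical point is justifying Green's identity at the given Sobolev regularity, which is routine since $w, u, v \in H^{2m}(\Omega)$ and the trace maps appearing in the boundary sum are continuous on this space; there is no substantive obstacle in this argument beyond carefully unpacking the definitions of $P_0$, $G_0$ and the Navier traces.
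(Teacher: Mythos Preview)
Your proof is correct and follows essentially the same route as the paper: both arguments set $w=G_0(-H P_0 f)$, pair $(-\Delta)^m w=-Hu$ against a polyharmonic $v$, and use the Green's identity for $(-\Delta)^m$ together with the vanishing Navier data $\gamma w=0$ (from the definition of $G_0$) and $\partial_\nu\gamma w=0$ (from $(B_0H)f=0$) to kill the boundary terms. The only cosmetic difference is that the paper phrases the computation as showing $\langle (B_0H)f,g\rangle=-\int_\Omega H(P_0f)\,P_0g$ and then sets the left side to zero, whereas you invoke the vanishing of all boundary traces of $w$ directly.
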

\begin{proof}
  Let $P_0g \in H^{2m}(\Omega)$ be an arbitrary function satisfying 
  \begin{align*}
      (-\Delta)^m P_{0}g=0 \quad \mbox{in}\quad \Omega \quad \mbox{and} \quad \gamma P_0g=g \quad \mbox{on} \quad \PD\Omega,
  \end{align*}
  where $ g=(g_0,\cdots,g_{m-1})\in \prod_{k=0}^{m-1} H^{2m-2k-\frac{1}{2}}(\partial\Omega).$
  Next consider the following:  \begin{align*}
      \langle (B_{0}H)f,g\rangle_{\prod_{k=0}^{m-1} H^{2m-2k-\frac{3}{2}}(\partial\Omega),\prod_{k=0}^{m-1} H^{2m-2k-\frac{1}{2}}(\partial\Omega)}= \int\limits_{\PD\Omega} \PD_{\nu}\gamma G_{0}(-HP_0f)\, g
  \end{align*}
  where $ G_{0}(-HP_0f)$ solves $(-\Delta)^mG_0(-HP_0h)= -H\,P_0f $ in $ \Omega$ and  $\gamma G_{0}(-HP_0f)|_{\PD\Omega}=0$. Multiplying $P_0g$ to the equation $(-\Delta)^mG_0(-HP_0h)= -H\,P_0f$ we get
  \begin{align*}
   \int\limits_{\Omega}  (-\Delta)^mG_0(-HP_0h)\, P_0g&= - \int\limits_{\Omega} P_0g\, H(P_0f).
   \end{align*}
  We next make use of integration parts,  $ (-\Delta)^m P_0g=0$ in $ \Omega $ and $\gamma G_{0}(-HP_0f)=0$ on $\PD\Omega$ to derive
   \begin{align*}
   \int\limits_{\Omega}  (-H\,P_0f)\, P_0g&=    \sum\limits_{k=0}^{m-1} \int\limits_{\PD\Omega}\PD_{\nu}(-\Delta)^{m-k-1} G_{0}(-HP_0f)\, (-\Delta)^{k}P_0g\nonumber\\
   &= \langle B_{0}(f),g\rangle_{\prod_{k=0}^{m-1} H^{2m-2k-\frac{3}{2}}(\partial\Omega),\prod_{k=0}^{m-1} H^{2m-2k-\frac{1}{2}}(\partial\Omega)}.
   \end{align*}
   This along with  $(B_0H)f=0$ implies
      $ \int\limits_{\Omega}  (-H\,P_0f)\, P_0g=0$,
  where $ P_0f$ and $P_0g$ solve $(-\Delta)^m\, (\cdot)=0$. Recall that $H$ is same as $Q $ represented by $\tilde{a}^l$.   This finishes the proof.
\end{proof}

\section*{Acknowledgements}

Both authors were partly supported by the Academy of Finland (Centre of Excellence in Inverse Modelling and Imaging, grant 284715) and by the European Research Council under Horizon 2020 (ERC CoG 770924).

\bibliographystyle{alpha}
\bibliography{bibfile.bib}

\end{document}